\titleformat{\section}[hang]{\Large\bfseries\filright}{\thesection.}{.5em}{}
\titleformat{\subsection}[hang]{\large\bfseries\filright}{}{0em}{}
\titleformat{\subsubsection}[block]{\bfseries}{}{0em}{}
\theoremstyle{definition}
\newtheorem{question*}{Question}
\newcommand{\C}{\mathbb{C}}
\newcommand{\tr}{\mathrm{Tr}}
\theoremstyle{plain}
\newtheorem{theorem}{Theorem}
\newtheorem{lemma}[theorem]{Lemma}
\newtheorem{corollary}[theorem]{Corollary}
\newtheorem{proposition}[theorem]{Proposition}
\newtheorem{remark}[theorem]{Remark}
\theoremstyle{definition}
\newtheorem{definition}[theorem]{Definition}
\newtheorem{example}[theorem]{Example}
\begin{document}

\title{Operator Algebra Generalization of a Theorem of Watrous and Mixed Unitary Quantum Channels}

\author{David~W.~Kribs$^1$, Jeremy Levick$^1$, Rajesh Pereira$^1$, Mizanur Rahaman$^2$}
%[D.~W. Kribs, J.~Levick, R. Pereira, M.~Rahaman]{David~W.~Kribs$^{1,2}$, Jeremy Levick$^{1,2}$, Rajesh Pereira$^{1}$, Mizanur Rahaman$^{3}$}

\affil{$^1$Department of Mathematics \& Statistics, University of Guelph, Guelph, ON, Canada N1G 2W1. Email {mailto:dkribs@uoguelph.ca}{dkribs@uoguelph.ca}, {mailto:jerlevick@gmail.com}{jerlevick@gmail.com}, {mailto:pereirar@uoguelph.ca}{pereirar@uoguelph.ca} }
%\address{$^2$Institute for Quantum Computing, University of Waterloo, Waterloo, ON, Canada N2L 3G1}
%\address{$^2$Department of Mathematics \& Applied Mathematics, University of Cape Town, Cape Town 7700, South Africa}
%\address{$^4$Department of Pure Mathematics, University of Waterloo, Waterloo, ON, Canada N2L 3G1}
\affil{$^2$ Univ Lyon, ENS Lyon, UCBL, CNRS, Inria, LIP, F-69342, Lyon Cedex 07, France. Email {mailto:mizanur.rahaman@ens-lyon.fr}{mizanur.rahaman@ens-lyon.fr}}

%\subjclass[2020]{[CHECK]15B48, 47L90, 81P40, 81P45, 94A40}

%\keywords{quantum channel, mixed unitary channel, unital channel, conditional expectation, operator algebras. }

\date{\today}
\maketitle

\begin{abstract}
We establish an operator algebra generalization of Watrous' theorem \cite{watrous2009} on mixing unital quantum channels (completely positive trace-preserving maps) with the completely depolarizing channel, wherein the more general objects of focus become (finite-dimensional) von Neumann algebras, the unique trace preserving conditional expectation onto the algebra, the group of unitary operators in the commutant of the algebra, and the fixed point algebra of the channel. As an application, we obtain a result on the asymptotic theory of quantum channels, showing that all unital channels are eventually mixed unitary. We also discuss the special case of the diagonal algebra in detail, and draw connections to the theory of correlation matrices and Schur product maps.
\end{abstract}

\section{Introduction}

Quantum channels, which are mathematically described by completely positive trace-preserving maps, are central objects of study in quantum information theory \cite{holevo2019quantum,nielsen, paulsen,watrous2018}. The class of unital (or doubly bistochastic) channels are a class of particular interest, and amongst such channels the subclass of mixed unitary channels arise in almost every area of quantum information theory (see \cite{audenaert2008random,girard2020mixed,gurvits2003classical,ioannou2006computational,kribs2021nullspaces,LeeWatrous2020mixed} as entrance points into the corresponding literature).
Hence a basic topic in the theory of quantum channels and their applications is the determination of when or how close a unital channel is to being mixed unitary. A fundamental result in this direction is a theorem of Watrous \cite{watrous2009}, which shows that any unital channel that is properly averaged with the `completely depolarizing channel', the map that sends all quantum states to the maximally mixed state, can be written as a mixed unitary channel.

In this paper, we obtain a generalization of Watrous' Theorem to the setting of  operator algebras. The more general objects of focus become (finite-dimensional) von Neumann algebras, the unique trace preserving conditional expectation onto the algebra, the group of unitary operators in the commutant of the algebra, and the fixed point algebra of the channel. The original theorem is recovered when applied to the special case of the (trivial) scalar algebra, wherein the completely depolarizing channel is viewed as the conditional expectation onto the algebra. Our proof is necessarily more intricate, requiring a number of supporting results that may be of independent interest.
As an application, we obtain a result on the asymptotic theory of quantum channels, and we show that all unital channels are eventually mixed unitary. We first show this for primitive unital channels using the Watrous theorem, and then we prove the general result following some prepatory work on irreducible unital channels and their peripheral eigenvalue algebras before applying the theorem.
Finally, the case of the diagonal algebra yields a connection with correlation matrices and Schur product maps, and we conclude by considering this case in more detail, interpreting the results in that setting and providing alternative viewpoints of the main theorem.

This paper is organized as follows. The next section includes requisite preliminary notions, and we motivate and formulate the main theorem statement. Section~3 includes the theorem proof, Section~4 derives the application discussed above, and Section~5 gives the detailed treatement of the diagonal algebra case.

\section{Background}

We begin by recalling basic preliminary notions, and then we formulate our main theorem.

\subsection{Preliminaries}

Given a positive integer $d \geq 1$, we let $M_d$ denote the set of $d \times d$ complex matrices. The matrix units $E_{ij}$, for $1 \leq i,j \leq d$, are the elements of $M_d$ with a 1 in the $i,j$ entry and 0's elsewhere. The (Hilbert-Schmidt) trace inner product on $M_d$ is given by $\langle A,B \rangle = \tr (B^* A)$. The tensor product algebra $M_d \otimes M_d$ is naturally identified with $M_{d^2}$ and has matrix units $E_{ij}\otimes E_{kl}$. We will make use of the linear map $\mathrm{vec}:M_d \rightarrow \C^d \otimes \C^d$ defined by $\mathrm{vec}(E_{ij}) = e_i \otimes e_j$, where $\{ e_1, \ldots ,e_{d} \}$ is the standard basis for $\C^d$.

We will be interested in completely positive maps $\Phi : M_d \rightarrow M_d$ \cite{paulsen}, which can always be represented in operator-sum form $\Phi(X) = \sum_i K_i X K_i^*$ for some set of `Kraus' operators $K_i \in M_d$ \cite{kraus}. Each map defines a dual map via the trace inner product, wherein the roles of the operators $K_i$ and $K_i^*$ are reversed in the operator-sum form.
The map $\Phi$ is unital if $\Phi(I)=I$, where $I$ is the identity matrix. If it is trace-preserving, which occurs exactly when $\sum_i K_i^* K_i = I$, then the map is called a {\it quantum channel} \cite{holevo2019quantum,nielsen,watrous2018}. The class of unital (quantum) channels are pervasive in quantum information, and we define an important subclass below. Note that a channel is unital if and only its dual map is a unital channel as well.

The `Choi matrix'  \cite{Choi1975} for $\Phi$ is the matrix $J(\Phi) \in M_d \otimes M_d$ given by,
\begin{equation}\label{choimatrix}
J(\Phi) = \sum_{i,j=1}^d E_{ij}\otimes \Phi(E_{ij}) .
\end{equation}
It is a positive semi-definite matrix if and only if $\Phi$ is a completely positive map. The map $J(\cdot)$ is linear, and we note that $J(\Phi) = \sum_k \mathrm{vec}(K_i) \mathrm{vec}(K_i)^*$ when the $K_i$ are Kraus operators for $\Phi$ \cite{watrous2018}.

By an {\it operator algebra}, we will mean a finite-dimensional von Neumann algebra (or C$^*$-algebra), which, up to unitarily equivalence \cite{davidson}, is a set of matrices contained inside some $M_d$ of the form:
\begin{equation}\label{vnalg}
\mathcal A = \oplus_{k} (I_{m_k} \otimes M_{n_k}),
\end{equation}
for some unique choice of positive integers $m_k, n_k$. The algebras we consider will typically be the fixed point sets of unital channels, and so necessarily will be unital ($I\in \mathcal A$), which means that $\sum_k m_k n_k = d$.  The commutant $\mathcal A^\prime$ of $\mathcal A$, which is the set of all matrices in $M_d$ that commute with every element of $\mathcal A$, has a corresponding form up to unitary equivalence given by $\mathcal A^\prime = \oplus_k (M_{m_k} \otimes I_{n_k})$.

%This is the paragraph
Given an algebra $\mathcal A \subseteq M_d$, we can consider {\it conditional expectations} onto the algebra, which are maps $\mathcal E : M_d \rightarrow \mathcal A$ such that: (1) $\mathcal E(A)=A$ for all $A\in \mathcal A$; (2) $\mathcal E( A_1 X A_2) = A_1 \mathcal E(X) A_2$ for all $A_1,A_2\in  \mathcal A$ and $X\in M_d$; and, (3) if $X\in M_d$ is positive semi-definite, then so is $\mathcal E(X)$.
Every conditional expectation of $M_d$ onto $\mathcal A$ is completely positive (and is a unital map when the algebra is unital), and amongst all possible conditional expectations onto $\mathcal A$, there is a unique map that is also trace-preserving \cite{pereira2006representing} (in fact, it is exactly the orthogonal projection of $M_d$ onto $\mathcal A$ in the trace inner product). So given a unital algebra $\mathcal A \subseteq M_d$, we shall denote the trace preserving conditional expectation onto $\mathcal A$ by $\mathcal E_{\mathcal A} : M_d \rightarrow \mathcal A$.

The fixed point set $\mathrm{Fix}(\Phi) = \{ X \in M_d \, | \, \Phi(X) = X \}$ will also play a key role in our analysis. For a unital map $\Phi$, it is easily seen that $\mathrm{Fix}(\Phi)$ contains the commutant of the Kraus operators, and further, for a unital channel these two sets coincide; $\mathrm{Fix}(\Phi) = \{K_i \}^\prime$ \cite{kribscommut}. In particular, this means the fixed point set, which in general is just an operator subspace, in the case of unital channels is an operator algebra.

We shall focus on the following class of unital channels, which are important in several areas of quantum information \cite{audenaert2008random,girard2020mixed,gurvits2003classical,ioannou2006computational,kribs2021nullspaces,LeeWatrous2020mixed}.

\begin{definition}
A completely positive linear map $\Phi: M_d \rightarrow M_d$ is called a {\it mixed unitary channel} if there exists a set of $d \times d$ unitary matrices $\{U_i\}_{i=1}^r$ and a set of nonnegative numbers $\{\lambda_i\}_{i=1}^r$ with $\sum_{i=1}^r \lambda_i=1$ such that $\Phi(X)=\sum_{i=1}^r \lambda_i U_iXU_i^*$.
%The smallest integer $r$ that works for a given $\Phi$ is called the mixed unitary rank of $\Phi$.
\end{definition}

It is known that every single-qubit unital channel is mixed unitary, but this is not the case for higher dimensions.

\subsection{Formulation of the Conjecture}

We shall establish a generalization of the following theorem of Watrous \cite{watrous2009} to the setting of operator algebras.

\begin{theorem}\label{watrous}
Let $\Phi: M_d \rightarrow M_d$ be a unital quantum channel. Then for $0 \leq p \leq 1/ (d^2-1)$, the convex combination of maps given by
\begin{equation}
p \, \Phi(X) + (1-p) \frac{\tr(X)}{d} I_d
\end{equation}
is a mixed unitary channel.
\end{theorem}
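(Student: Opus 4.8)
The plan is to pass to the Choi-matrix picture and show that the Choi matrix of the averaged map lies in the convex hull of the Choi matrices of unitary channels. Writing $\Omega(X) = \frac{\tr(X)}{d}I_d$ for the completely depolarizing channel, its Choi matrix is $J(\Omega) = \frac1d I_{d^2}$, and by linearity of $J(\cdot)$ the averaged channel $\Psi_p(X) = p\,\Phi(X) + (1-p)\,\frac{\tr(X)}d I_d$ has Choi matrix
\begin{equation}
J(\Psi_p) = \tfrac1d I_{d^2} + p\bigl(J(\Phi) - \tfrac1d I_{d^2}\bigr).
\end{equation}
Since $J(U(\cdot)U^*) = \vec(U)\vec(U)^*$, a channel is mixed unitary exactly when its Choi matrix lies in the convex hull $\mathcal{M} := \mathrm{conv}\{\vec(U)\vec(U)^* : U \ \text{unitary}\}$. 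Thus the statement becomes a convex-geometry assertion: the point $J(\Psi_p)$, obtained by contracting $J(\Phi)$ toward the center $\tfrac1d I_{d^2}$ by the factor $p$, must land inside $\mathcal M$. Note that $\tfrac1d I_{d^2}$ is itself the center of $\mathcal M$, being the uniform mixture of the $d^2$ discrete Weyl (generalized Pauli) unitaries.

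Both $J(\Phi)$ and every element of $\mathcal M$ lie in the affine subspace $\{H = H^* : \tr_1 H = \tr_2 H = I_d\}$ cut out by the unitality and trace-preservation constraints; write $V_0$ for the associated linear subspace (Hermitian $H$ with $\tr_1 H = \tr_2 H = 0$), which is orthogonal to $\tfrac1d I_{d^2}$. I would prove the theorem by a two-radius argument in $V_0$ with the Hilbert–Schmidt norm, circumscribing the unital channels and inscribing $\mathcal M$. First, the circumscribing bound: for any unital channel, Cauchy–Schwarz applied to the Kraus operators gives $\norm{J(\Phi)}_{\op}\le d$, hence $\tr\bigl(J(\Phi)^2\bigr)\le d\,\tr J(\Phi) = d^2$ and therefore $\norm{J(\Phi) - \tfrac1d I_{d^2}}_2 \le \sqrt{d^2-1}$, so the contracted point satisfies $\norm{J(\Psi_p) - \tfrac1d I_{d^2}}_2 \le p\sqrt{d^2-1}$. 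Second, the inscribed radius: I would show that $\mathcal M$ contains the entire Hilbert–Schmidt ball of radius $1/\sqrt{d^2-1}$ about the center inside $V_0$. Granting these two facts, $p \le 1/(d^2-1)$ forces $p\sqrt{d^2-1}\le 1/\sqrt{d^2-1}$, so $J(\Psi_p)$ lies in the inscribed ball and hence in $\mathcal M$, which is exactly the claim.

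The heart of the argument is the inscribed-radius bound. By the support-function description of a convex hull, membership of the ball in $\mathcal M$ is equivalent to the estimate $h(D) \ge \norm{D}_2/\sqrt{d^2-1}$ for every $D \in V_0$, where $h(D) := \max_U \ip{\vec(U)}{D\,\vec(U)}$ is the support function of $\mathcal M$ in direction $D$. The natural tool is the degree-two Weingarten/second-moment calculus over the Haar measure on the unitary group: one computes that the random variable $\ip{\vec(U)}{D\,\vec(U)}$ has mean $\ip{\tfrac1d I_{d^2}}{D} = 0$ (using $\tr D = 0$), while the conditions $\tr_1 D = \tr_2 D = 0$ collapse the second-moment formula to the clean value $\mathbb{E}_U\,\ip{\vec(U)}{D\,\vec(U)}^2 = \norm{D}_2^2/(d^2-1)$, which is precisely the target constant.

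The step I expect to be the main obstacle is passing from this root-mean-square value to the one-sided maximum $h(D)$ with the sharp constant and uniformly in $D$: a naive second-moment argument only bounds $\max\bigl(h(D), h(-D)\bigr)$ from below, whereas the inscribed ball requires $h(D)$ itself to be controlled in every direction. Overcoming this is where the real work lies, and I would address it either by exploiting symmetry of the quadratic form $\ip{\vec(U)}{D\,\vec(U)}$ under a suitable subgroup (forcing the distribution to be balanced enough that the maximum dominates the RMS), or by directly constructing a measure on the unitary group — a Haar density perturbed in the direction $D$ — whose barycenter realizes the desired boundary point of the ball, with positivity of the density being exactly the constraint that encodes the constant $d^2-1$.
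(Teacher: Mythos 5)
Your reduction to convex geometry is set up correctly: the circumscribing bound $\norm{J(\Phi)-\tfrac1d I_{d^2}}_2\le\sqrt{d^2-1}$ does follow from $J(\Phi)\ge 0$ and $\tr J(\Phi)=d$, the support-function reformulation of the inscribed-ball claim is the right one, and the moment computations ($\mathbb{E}_U f=0$ and $\mathbb{E}_U f^2=\norm{D}_2^2/(d^2-1)$ for $f(U)=\vec(U)^*D\vec(U)$ with $D$ Hermitian and both partial traces zero) are correct. But the argument is genuinely incomplete at exactly the point you flag, and neither of your two proposed repairs closes it. The symmetry route fails because the law of $f$ is badly asymmetric: for $D=\vec(I)\vec(I)^*-\tfrac1d I_{d^2}\in V_0$ one has $f(U)=\abs{\tr U}^2-1$, which ranges over $[-1,d^2-1]$; moreover $h(-D)=1=\norm{D}_2/\sqrt{d^2-1}$ \emph{exactly}, so the inequality you need is tight in some directions and cannot follow from a second-moment bound, which only controls $\max\bigl(h(D),h(-D)\bigr)$. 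The perturbed-density route, implemented with the natural density $1+\alpha f(U)$, reaches only the point $\tfrac1d I_{d^2}+\tfrac{\alpha}{d^2-1}D$ subject to $\alpha\le 1/h(-D)$, and in the direction $-D$ above this falls short of the required radius by a factor of $d^2-1$. So the inscribed-ball claim --- which is strictly stronger than the theorem, since it discards the positivity of $J(\Phi)$ and retains only a norm bound --- is left unproven, and any proof of it would have to be sharp.

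The repair is to apply your strategy of constructing a measure not to an arbitrary direction $D$ but to the given channel itself, using its Kraus decomposition; this is what Watrous does in \cite{watrous2009} (note the paper does not reprove Theorem~\ref{watrous} --- it cites it --- but its Section~3 runs the same computation in the operator-algebra setting via the map $L(\Phi)$). Concretely, with Kraus operators $K_i$ of $\Phi$, take the manifestly nonnegative Haar density $\sum_i\abs{\tr(U^*K_i)}^2$, which has total mass $\sum_i\tfrac1d\tr(K_i^*K_i)=1$; the same degree-two moment calculus you already invoke, applied to $J(\Phi)=\sum_i\vec(K_i)\vec(K_i)^*$ with $\sum_iK_i^*K_i=\sum_iK_iK_i^*=I$, shows that $\sum_i\int_{U}UXU^*\,\abs{\tr(U^*K_i)}^2\,d\mu(U)=\tfrac{1}{d^2-1}\Phi(X)+\tfrac{d^2-2}{d^2-1}\,\delta_d(X)$. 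Positivity of the mixing weights is then automatic, the constant $1/(d^2-1)$ drops out of the same Weingarten coefficients you computed, and no inscribed-ball statement is needed.
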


The {\it completely depolarizing channel} $\delta_d : M_d \rightarrow M_d$ is defined as:
%\begin{equation}
$\delta_d(X) = d^{-1}\tr (X) I_d$.
%\end{equation}
As it is also a mixed unitary channel (implemented by any set of (uniformly scaled) unitary operators that form an orthogonal basis in the trace inner product on $M_d$), and the set of mixed unitary channels is convex, the theorem is proved by explicitly proving the case $p = 1/(d^2-1)$. That is,  Theorem~\ref{watrous} is equivalent to the statement that the convex combination $p \Phi + (1-p)\delta_d$ is mixed unitary for $p = \frac{1}{d^2-1}$.

Some initial investigation shows that a naive generalization of Theorem~\ref{watrous} does not hold. Consider the following example, which illustrates this point.

\begin{example}\label{watrouseg}
We have the identity map $\mathrm{id}: M_3\rightarrow M_3$, $\mathrm{id}(X)=X$, and the Werner-Holevo channel \cite{werner2002counterexample} on $M_3$ given by,
\[
W_3^{-}(X)=\frac{1}{2} [\tr(X)-X^t],
\]
where $X^t$ is the transpose of $X$. Now consider the channel $\Phi_p : M_3 \rightarrow M_3$ for $0<p\leq 1$ given by,
\[
\Psi_p(X)=p X+ (1-p) W_3^{-}(X).
\]
We claim that this channel is not mixed-unitary for any $p$. Indeed, first note that any operator-sum representation of $\Psi_p$ will have Kraus operators of the form $K=\alpha I+ A$, where $A$ is an anti-symmetric matrix, $I$ is the identity matrix and $\alpha$ is a constant. (To see this, observe that $\Psi_p$ has a representation of this form, and then note this implies any representation has this form.) As $A$ is a $3\times 3$ anti-symmetric matrix, the eigenvalues are $\lambda, -\lambda, 0$. So the eigenvalues of $K$ are $\alpha+\lambda, \alpha-\lambda, \alpha $. Now if $K$ is a multiple of a unitary, then these eigenvalues must lie on a circle. However, these three numbers are co-linear. Hence $\lambda=0$ and so $A=0$. This is true for all Kraus operators and thus it follows that $p=1$.

One might expect a naive generalization of the original Watrous Theorem to find that $t \mathrm{id} + (1-t)\Phi$ is mixed unitary for some $t$, simply replacing $\delta_3$ with the identity map $\mathrm{id}$. But in fact, $W_3^{-}$ is a channel for which $t \mathrm{id} + (1-t) W_3^{-}$ is not mixed unitary for any $t < 1$. So, simply replacing the depolarizing channel by another unital channel immediately yields that there are channels for which no non-trivial convex combination is mixed unitary.
\end{example}

After some more thought, we were led to view Theorem~\ref{watrous} as a special case of a more general phenomena in the context of operator algebras. In particular, in seeking to generalize the theorem, we make the following observations:
\begin{itemize}
\item $\delta_d$ is the (unique) trace preserving conditional expectation onto the trivial scalar algebra $\mathcal A = \C I_d$.
\item Every unital channel $\Phi$ contains the trivial algebra in its fixed point algebra; $\mathrm{Fix}(\mathcal A) \supseteq \C I_d$.
\item The unitary group $\mathcal{U}(d)$ inside $M_d$ is the group of unitaries contained in the commutant of the trivial algebra; $(\C I_d)^\prime = M_d$.
\end{itemize}

Following further investigation, we replace the trivial algebra $\C I_d$ with an arbitrary unital operator algebra $\mathcal{A}$, and then we formulated a conjecture on the generalization, which we state and prove as the following result.

\begin{theorem}\label{main}
Let $\mathcal{A}$ be any unital operator algebra inside $M_d$. Let $\mathcal{E}_{\mathcal{A}}$ be the trace preserving conditional expectation onto $\mathcal{A}$, and let $\mathcal{U}_{\mathcal{A}'}$ be the group of unitaries contained in the commutant of $\mathcal{A}$. Then for any unital channel $\Phi$ whose fixed point algebra contains $\mathcal{A}$, there exists a $p \in (0,1)$ depending only on the algebra $\mathcal{A}$ such that the convex combination
\begin{equation}
p\Phi + (1-p)\mathcal{E}_{\mathcal{A}}\end{equation} is in the convex hull of channels of the form $\Phi_U (X) = UXU^*$ where $U\in \mathcal{U}_{\mathcal{A}'}$.
\end{theorem}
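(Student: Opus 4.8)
The plan is to pass to the Choi picture and combine a twirling identity for $\mathcal{E}_{\mathcal{A}}$ with a relative-interior argument. First I would record two structural facts. (i) The trace preserving conditional expectation is the twirl over the commutant unitaries: $\mathcal{E}_{\mathcal{A}}(X) = \int_{\mathcal{U}_{\mathcal{A}'}} U X U^* \, dU$ with respect to normalized Haar measure. This holds because the right-hand side is completely positive, unital, trace preserving, fixes $\mathcal{A}$ pointwise, lands in $(\mathcal{U}_{\mathcal{A}'})'=(\mathcal{A}')'=\mathcal{A}$, and is an $\mathcal{A}$-bimodule map (invariance of Haar measure gives $\mathcal{E}(A_1 X A_2)=A_1\mathcal{E}(X)A_2$); so it is a conditional expectation onto $\mathcal{A}$ and hence equals $\mathcal{E}_{\mathcal{A}}$ by the uniqueness quoted above. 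In particular $\mathcal{E}_{\mathcal{A}}$ already lies in $K := \operatorname{conv}\{\Phi_U : U \in \mathcal{U}_{\mathcal{A}'}\}$. (ii) Any unital channel $\Phi$ with $\mathcal{A}\subseteq\mathrm{Fix}(\Phi)$ is automatically an $\mathcal{A}$-bimodule map: since $\mathrm{Fix}(\Phi)=\{K_i\}'$, every $A\in\mathcal{A}$ commutes with all Kraus operators, whence $\Phi(A_1 X A_2)=A_1\Phi(X)A_2$. The same computation shows each $\Phi_U$, and hence every element of $K$, is a unital, trace preserving, $\mathcal{A}$-bimodule map.

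Passing to Choi matrices, let $\tilde{K} = J(K) = \operatorname{conv}\{\mathrm{vec}(U)\mathrm{vec}(U)^*: U\in\mathcal{U}_{\mathcal{A}'}\}$ and let $\mathcal{S}$ denote the real affine space of Choi matrices of unital, trace preserving, $\mathcal{A}$-bimodule Hermitian-preserving maps; by (ii), $\tilde{K}\subseteq\mathcal{S}$ and $J(\Phi)\in\mathcal{S}$ for every eligible $\Phi$. The next step is convex-geometric. Writing $J(\mathcal{E}_{\mathcal{A}})=\int \mathrm{vec}(U)\mathrm{vec}(U)^*\,dU$ exhibits it as the barycenter of Haar measure pushed forward to the compact generating set $\{\mathrm{vec}(U)\mathrm{vec}(U)^*\}$; since that measure has full support it is supported on no proper face, so its barycenter lies in the relative interior of $\tilde{K}$. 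Hence there is a radius $\rho>0$, depending only on $\mathcal{A}$, with $\{T\in\operatorname{aff}(\tilde{K}): \|T-J(\mathcal{E}_{\mathcal{A}})\|\le\rho\}\subseteq\tilde{K}$. Given an eligible $\Phi$, the Choi matrix of $p\Phi+(1-p)\mathcal{E}_{\mathcal{A}}$ is $J(\mathcal{E}_{\mathcal{A}})+p\,(J(\Phi)-J(\mathcal{E}_{\mathcal{A}}))$; provided $J(\Phi)\in\operatorname{aff}(\tilde{K})$, this point stays in $\operatorname{aff}(\tilde{K})$ and lies within the ball once $p\le\rho/\|J(\Phi)-J(\mathcal{E}_{\mathcal{A}})\|$. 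Because $J(\Phi)\ge 0$ with $\operatorname{Tr}J(\Phi)=d$ bounds $\|J(\Phi)-J(\mathcal{E}_{\mathcal{A}})\|$ by a constant depending only on $d$, a single $p$ works for all eligible $\Phi$, as required.

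Everything therefore reduces to the spanning statement that $J(\Phi)\in\operatorname{aff}(\tilde{K})$ for every eligible $\Phi$, equivalently (transporting through the linear correspondence between Choi matrices and superoperators) that $\hat{\Phi}$ lies in the affine hull of $\{\hat{\Phi}_U=U\otimes\bar{U}: U\in\mathcal{U}_{\mathcal{A}'}\}$, where $\hat{\Psi}$ denotes the map $\mathrm{vec}(X)\mapsto\mathrm{vec}(\Psi(X))$. This is the step I expect to be the main obstacle: it asserts that conjugations by $\mathcal{U}_{\mathcal{A}'}$ are rich enough to affinely reach every unital, trace preserving, $\mathcal{A}$-bimodule map. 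I would attack it through the representation $\pi(U)=U\otimes\bar{U}$ of $\mathcal{U}_{\mathcal{A}'}\cong\prod_k\mathrm{U}(m_k)$ on $\C^d\otimes\C^d$: its linear span $\mathcal{B}=\operatorname{span}\{\pi(U)\}$ is a $*$-algebra, and the bimodule property already gives $\mathcal{B}\subseteq(\mathcal{A}\otimes\mathcal{A}^t)'$, the space of all bimodule superoperators. The claim is that once one further imposes unitality and trace preservation—which force $\hat{\Phi}$ to fix $\mathrm{vec}(I)$ and to preserve its orthogonal complement—the resulting maps all fall inside $\mathcal{B}$. Establishing this means decomposing $\pi$ into isotypic components via Schur's lemma and matching dimensions; already in the scalar case $\mathcal{A}=\C I$ this recovers $\dim\operatorname{aff}(\tilde{K})=(d^2-1)^2$, exactly the dimension of the unital trace preserving maps. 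Carrying out the decomposition for the block algebra $\mathcal{A}=\oplus_k I_{m_k}\otimes M_{n_k}$ and controlling the off-diagonal multiplicity spaces is the technical heart, and is presumably where the supporting results mentioned in the introduction enter.
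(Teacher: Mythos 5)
Your convex-geometric framework is reasonable, and the pieces you do establish are correct: the twirl identity $\mathcal{E}_{\mathcal{A}}(X)=\int_{\mathcal{U}_{\mathcal{A}'}}UXU^*\,d\mu(U)$ (which the paper also uses), the bimodule property of eligible channels, the fact that a barycenter of a fully supported measure lies in the relative interior of the convex hull, and the uniform bound on $\|J(\Phi)-J(\mathcal{E}_{\mathcal{A}})\|$. But the argument has a genuine gap exactly where you flag it: everything hinges on the spanning claim that $J(\Phi)\in\operatorname{aff}\{\operatorname{vec}(U)\operatorname{vec}(U)^*:U\in\mathcal{U}_{\mathcal{A}'}\}$ for every eligible $\Phi$, and you do not prove it. This is not a routine verification to be deferred. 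It requires a real-affine (not complex-linear) analysis of the representation $U\mapsto U\otimes\bar U$ of $\prod_k\mathcal{U}(n_k)$: on the off-diagonal isotypic blocks one can recover complex scalars by adjusting relative phases of the $U_k$, but on the diagonal (adjoint-representation) blocks one only controls a real span, and one must check that the real form so obtained still captures the Hermiticity-preserving superoperators, and separately that the multiplicity spaces are handled correctly so that one lands in $\pi(\mathcal{U}_{\mathcal{A}'})''$ rather than the larger bimodule commutant. Until that is done, the relative-interior ball argument has nothing to apply to, so the proof is incomplete at its central step.

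It is worth contrasting this with what the paper actually does, since the difference is instructive. Rather than proving an affine-hull statement and invoking an interiority argument (which would yield a non-constructive $p$), the paper works entirely constructively: it defines the weighted twirl
\[
L(\Phi)(X)=\sum_i\int_{\mathcal{U}(\mathcal{A}')}UXU^*\,\bigl|\langle U,K_i\rangle_{\mathcal{A}'}\bigr|^2\,d\mu(U),
\]
which is manifestly a positive combination of conjugations by unitaries in $\mathcal{A}'$, and then evaluates it block by block using the Haar identities of Corollary~\ref{integraltrace} together with the original Watrous theorem applied on each $M_{n_k}$. This yields $L(\Phi)=\Phi+(D-1)\mathcal{E}_{\mathcal{A}}+\sum_{k:n_k>1}\frac{1}{n_k^2-1}(\widehat{\Phi_k}-\widehat{\delta_{n_k}})$, and the unwanted correction terms are then cancelled by adding $L(\widehat{\delta_{n_k}-\tfrac{1}{n_k^2}\Phi_k})$, which is again a positive combination of unitary conjugations because $\delta_{n_k}-\tfrac{1}{n_k^2}\Phi_k$ is completely positive. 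This simultaneously proves membership in the convex hull and produces the explicit constant $p$. If you want to salvage your route, the spanning lemma is the theorem you must prove; as it stands, your proposal reduces the problem to it without resolving it.
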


Returning to the example above, the point is, in order to generalize properly, one must restrict the set of channels, $\Phi$, to only those that fix the algebra onto which the conditional expectation projects.
In the example, $\mathrm{id}$ is the conditional expectation onto the full matrix algebra, $M_3$, and in fact there are no non-trivial channels that fix this algebra. It is also the case that the fixed point algebra of $W_3^{-}(X)$ is just the trivial algebra, consisting of scalar multiples of the identity matrix. Thus, there is no unital channel other than $\delta_3$ for which we should expect a Watrous-type theorem to hold for $W_3^{-}$.

%%%%%%%%%%%%

\section{Proof of Main Result}

In this section we shall prove Theorem~\ref{main}. The proof requires a number of supporting results that may be of independent interest. We begin by establishing notation.

Let $d = \sum_{k=1}^r m_k n_k$ for some positive integers $m_k, n_k$, and let $D = \sum_{k=1}^r n_k^2$. For the purposes of the proof, here we will assume the algebra $\mathcal A$ is given by,
$$
\mathcal{A} = \oplus_{k=1}^r M_{m_k}\otimes I_{n_k},
$$
so that
$$
\mathcal{A}' = \oplus_{k=1}^r I_{m_k}\otimes M_{n_k},
$$
and note this means the vector space dimension of the commutant is $D = \dim(\mathcal{A}')$.

Let $\{K_i\}_{i=1}^n$ be a fixed set of Kraus operators for $\Phi$. Then by assumption we have $\mathcal A \subseteq \mathrm{Fix}(\Phi) = \{ K_i \}^\prime$, so that the $K_i$ belong to $\mathcal A^\prime$ and hence each $K_i = \oplus_{k=1}^r I_{m_k}\otimes K_{ik}$ for some $K_{ik} \in M_{n_k}$.  Define $\Phi_k$, for each $k$, to be the map on $M_{n_k}$ with Kraus operators $\{K_{ik}\}_{i=1}^n$, and define $\widehat{\Phi_k}$ to be the map on $M_d$ whose Kraus operators $\widehat{K_{ik}}$ have $I_{m_k}\otimes K_{ik}$ on the $k$th block and zeroes on the other blocks. Note that $\Phi_k$ is a unital channel as $\Phi$ is.

We consider unitaries $U\in\mathcal A^\prime$, which are of the form $U = \oplus_{k=1}^r I_{m_k}\otimes U_k$ with $U_k \in \mathcal{U}(n_k)$.
For $A,B \in \mathcal{A}'$ define the inner product:
$$
\langle A,B\rangle_{\mathcal{A}'} = \sum_{k=1}^r n_k \tr(A_k^*B_k),
$$
where $A = \oplus_{k=1}^r I_{m_k}\otimes A_k$ and similarly for $B$. This is the inner product that arises from the left-regular representation of $\mathcal A^\prime$ \cite{jacobson1985basic,pereira2006representing}. 

Further, let $\delta_{n_k}$ be the depolarizing map on $M_{n_k}$, and recall we have $\mathcal E_{\mathcal{A}}$ as the trace-preserving conditional expectation onto $\mathcal{A}$.

Given any completely positive map $\Phi: M_d \rightarrow M_d$ with Kraus operators $K_i \in \mathcal{A}'$, define the following linear map on $M_d$:
\begin{equation}
L(\Phi)(X) = \sum_i\int_{U \in \mathcal{U}(\mathcal{A}')} UXU^* \bigl|\langle U, K_i\rangle_{\mathcal{A}'}\bigr|^2 d\mu(U)  \quad \quad \forall X \in M_d ,
\end{equation}
where $\mu(\cdot)$ is the Haar measure on the unitary group $\mathcal U (\mathcal A^\prime) = \mathcal A^\prime \cap \mathcal U(d)$. Notice that $L(\Phi)$ is a positive combination of unitary adjunctions, so possibly after some normalizing, $L(\Phi)$ is a mixed-unitary map. We note that $L(\cdot)$ depends on the algebra $\mathcal A$, though we will suppress reference to it in the notation, and also observe that $L(\Phi + \Psi) = L(\Phi) + L(\Psi)$ for any completely positive maps with Kraus operators in $\mathcal A^\prime$. 

We collect the following known results (with short proofs for completeness) before analyzing the map $L(\Phi)$ in more detail.

\begin{lemma}\label{unitaryprojint}
For any positive integer $d$, we have
\begin{equation}\label{unitaryprojinteqn}
\int_{U \in \mathcal{U}(d)} \mathrm{vec}(U)\mathrm{vec}(U)^* d\mu(U) = \frac{1}{d}I_d \otimes I_d.
\end{equation}
\end{lemma}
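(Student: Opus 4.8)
The plan is to denote the matrix in question by $M = \int_{U\in\mathcal{U}(d)} \mathrm{vec}(U)\mathrm{vec}(U)^*\,d\mu(U)$ and to pin it down completely using the bi-invariance of the Haar measure, rather than computing entries. First I would record the elementary identity $\mathrm{vec}(VUW) = (V\otimes W^{\t})\,\mathrm{vec}(U)$ for all $V,W,U\in M_d$, which follows directly from the convention $\mathrm{vec}(E_{ij}) = e_i\otimes e_j$ by checking it on matrix units (equivalently, by comparing entries). This is the one identity on which everything rests.

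Next, since $U\mapsto VUW$ preserves the Haar measure for any fixed unitaries $V,W$, substituting into the integral and applying the identity gives $M = (V\otimes W^{\t})\,M\,(V^*\otimes \overline{W})$ for all $V,W\in\mathcal{U}(d)$, using $(V\otimes W^{\t})^* = V^*\otimes\overline{W}$. Taking $W=I$ yields $(V\otimes I)M = M(V\otimes I)$, and taking $V=I$ (noting that $W^{\t}$ sweeps out all of $\mathcal{U}(d)$ as $W$ does) yields $(I\otimes W')M = M(I\otimes W')$ for every unitary $W'$. Because the unitaries span $M_d$, it follows that $M$ commutes with every $A\otimes I$ and every $I\otimes B$, hence with every $A\otimes B$, i.e. with all of $M_d\otimes M_d = M_{d^2}$. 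By Schur's lemma the commutant of the full matrix algebra is scalar, so $M = c\,(I_d\otimes I_d)$ for some $c\in\C$.

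Finally I would fix the constant by taking the trace: $\tr(M) = \int \norm{\mathrm{vec}(U)}^2\,d\mu(U) = \int \tr(U^*U)\,d\mu(U) = d$, since each $U$ is unitary and $\mu$ is a probability measure, while $\tr(c\,I_{d^2}) = c\,d^2$; equating gives $c = 1/d$, which is exactly the claimed value.

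I do not anticipate a genuine obstacle here, as this is a standard first-moment computation; the only points requiring care are the bookkeeping in the vec-conjugation identity (the transpose and complex conjugate appearing on the second tensor leg) and the observation that conjugating by $W^{\t}$ still ranges over the entire unitary group, so that the Schur argument applies symmetrically to both tensor factors. As an alternative route, one could instead evaluate the entries $M_{(ij),(kl)} = \int U_{ij}\overline{U_{kl}}\,d\mu(U)$ directly via first-order Schur orthogonality, $\int U_{ij}\overline{U_{kl}}\,d\mu(U) = \tfrac{1}{d}\delta_{ik}\delta_{jl}$, which immediately gives $M = \tfrac{1}{d}I_{d^2}$; however the invariance argument above is self-contained and avoids quoting that orthogonality relation.
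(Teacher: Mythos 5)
Your proof is correct, but it takes a genuinely different route from the paper's. The paper simply recognizes $\mathrm{vec}(U)\mathrm{vec}(U)^*$ as the Choi matrix of the conjugation $X\mapsto UXU^*$, so that by linearity of $J(\cdot)$ the integral is the Choi matrix of the twirl $\int_{U} UXU^*\,d\mu(U)=\tfrac{1}{d}\tr(X)I_d$, whose Choi matrix is read off as $\tfrac{1}{d}I_d\otimes I_d$; this is a two-line argument, but it quotes the standard identity $\int UXU^*\,d\mu(U)=\delta_d(X)$ without proof. Your argument instead establishes everything from first principles: the intertwining identity $\mathrm{vec}(VUW)=(V\otimes W^{\t})\mathrm{vec}(U)$ together with bi-invariance of Haar measure forces $M$ to commute with all $V\otimes I$ and all $I\otimes W'$, hence (since unitaries span $M_d$) with all of $M_{d^2}$, so $M$ is scalar, and the trace computation $\tr(M)=\int\tr(U^*U)\,d\mu(U)=d$ pins down the constant. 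All the steps check out, including the care taken with $(V\otimes W^{\t})^*=V^*\otimes\overline{W}$ and the observation that $W^{\t}$ ranges over all of $\mathcal{U}(d)$. In effect your proof is self-contained where the paper's is not --- indeed, transporting your commutant-plus-trace argument back through the vec identification is precisely how one proves the twirl identity that the paper takes as known --- at the cost of being somewhat longer; the paper's version buys brevity by leaning on Choi-matrix formalism already set up in its preliminaries.
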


\begin{proof}
For $\Phi(X) = UXU^*$, the Choi matrix is $J(\Phi) = \mathrm{vec}(U)\mathrm{vec}(U)^*$. Hence, the above integral is simply the Choi matrix of the channel
\[
\delta_d(X) = \int_{U\in \mathcal{U}(d)} UXU^* d\mu(U) = \frac{1}{d}\tr (X)I_d,
\]
and it is clear that $\frac{1}{d}I_d \otimes I_d$ is the correct Choi matrix.
\end{proof}

Since $U\otimes U^*$ has the same entries as $\mathrm{vec}(U)\mathrm{vec}(U)^*$, up to the permutation that maps $E_{ij}\otimes E_{lk} \mapsto  E_{ik}\otimes E_{jl}$, and since this same permutation maps $\frac{1}{d}I_d\otimes I_d \mapsto  \frac{1}{d}\sum_{i,j} E_{ij}\otimes E_{ji}$, we also have the following.

\begin{corollary} For any positive integer $d$, we have
\begin{equation}\label{unitarytensorint}
\int_{U\in \mathcal{U}(d)} U\otimes U^* d \mu(U) = \frac{1}{d} \sum_{i,j=1}^d E_{ij}\otimes E_{ji}.
\end{equation}
\end{corollary}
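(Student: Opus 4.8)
My plan is to reduce the corollary to a purely entrywise statement and read off the one scalar moment it needs directly from Lemma~\ref{unitaryprojint}. Writing $U=\sum_{i,j}U_{ij}E_{ij}$, a short computation gives $\mathrm{vec}(U)\mathrm{vec}(U)^* = \sum_{a,b,c,d} U_{ab}\overline{U_{cd}}\, E_{ac}\otimes E_{bd}$, so that the $(E_{ik}\otimes E_{jl})$-entry of $\mathrm{vec}(U)\mathrm{vec}(U)^*$ is exactly $U_{ij}\overline{U_{kl}}$. Comparing this entry against the corresponding entry of the right-hand side of \eqref{unitaryprojinteqn}, namely the $(E_{ik}\otimes E_{jl})$-entry of $\tfrac1d I_d\otimes I_d = \tfrac1d\sum_{i,j}E_{ii}\otimes E_{jj}$, immediately yields the first-moment identity $\int_{\mathcal{U}(d)} U_{ij}\overline{U_{kl}}\, d\mu(U) = \tfrac1d \delta_{ik}\delta_{jl}$. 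This is the only analytic input the argument requires; Lemma~\ref{unitaryprojint} is being used here precisely as the generating function for these moments.

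With that in hand, the second step is to assemble $\int U\otimes U^*\,d\mu(U)$ entrywise. Since $(U^*)_{rs}=\overline{U_{sr}}$, the $(E_{pq}\otimes E_{rs})$-entry of $U\otimes U^*$ is $U_{pq}\overline{U_{sr}}$, and integrating this against the moment formula above (with $i=p$, $j=q$, $k=s$, $l=r$) gives $\tfrac1d\delta_{ps}\delta_{qr}$. This is nonzero exactly when $s=p$ and $r=q$, so the surviving terms are $\tfrac1d\sum_{p,q} E_{pq}\otimes E_{qp}$, which upon relabelling is precisely $\tfrac1d\sum_{i,j}E_{ij}\otimes E_{ji}$, as claimed. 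Because the integrand is a bounded $M_{d^2}$-valued function of $U$, integration commutes with taking individual matrix entries, so this entrywise assembly is legitimate.

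A slicker, coordinate-free alternative is the route signposted just before the statement: there is a single, $U$-independent ``reshuffling'' permutation $P$ of the entries of $M_d\otimes M_d$ with $P\bigl(\mathrm{vec}(U)\mathrm{vec}(U)^*\bigr)=U\otimes U^*$ for every $U$, and since $P$ is a fixed linear map it commutes with the Haar integral; applying $P$ to both sides of \eqref{unitaryprojinteqn} then converts $\tfrac1d I_d\otimes I_d$ into $\tfrac1d\sum_{i,j}E_{ij}\otimes E_{ji}$ in one stroke. I expect the only real obstacle in either approach to be bookkeeping of the index/$\mathrm{vec}$ conventions: one must verify that the same permutation $P$ simultaneously intertwines $\mathrm{vec}(U)\mathrm{vec}(U)^*$ with $U\otimes U^*$ \emph{and} sends $I_d\otimes I_d$ to $\sum_{i,j}E_{ij}\otimes E_{ji}$, and it is easy to be off by a transpose. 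The entrywise computation above sidesteps this pitfall entirely, which is why I would write it up that way and invoke the permutation only as the conceptual explanation.
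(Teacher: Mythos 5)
Your proof is correct and takes essentially the same route as the paper: the paper's one-sentence argument is exactly the ``reshuffling permutation'' observation you describe, and your entrywise moment computation $\int U_{ij}\overline{U_{kl}}\,d\mu(U)=\tfrac{1}{d}\delta_{ik}\delta_{jl}$ is just that observation made explicit and verified. Your caution about being off by a transpose in the index bookkeeping is well placed, and writing out the entries as you do is the safest way to nail it down.
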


This in turn, gives us another Corollary that will be useful.

\begin{corollary}\label{integraltrace}
For any $X \in M_d(\C)$, we have
\begin{align}
\label{intXwithU} \int_{U\in \mathcal{U}(d)} U \, \tr (U^*X)d\mu(U) & = \frac{1}{d}X \\
\label{intXwithUsq} \int_{U\in\mathcal{U}(d)} |\tr(UX^*)|^2 d\mu(U) &= \frac{1}{d} \tr (X^*X).
\end{align}
\end{corollary}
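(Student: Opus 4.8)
The plan is to reduce both identities to a single second-moment formula for the entries of a Haar-random unitary, namely
\[
\int_{U\in\mathcal U(d)} U_{ij}\,\overline{U_{kl}}\,d\mu(U) = \frac{1}{d}\,\delta_{ik}\delta_{jl},
\]
which I would read off directly from the results already in hand. Indeed, writing $\vec(U) = \sum_{ij}U_{ij}\,e_i\otimes e_j$, the $\bigl((ij),(kl)\bigr)$ entry of $\vec(U)\vec(U)^*$ is precisely $U_{ij}\overline{U_{kl}}$, while the corresponding entry of the right-hand side $\frac1d I_d\otimes I_d$ of Lemma~\ref{unitaryprojint} is $\frac1d\delta_{ik}\delta_{jl}$; matching entries gives the moment formula. (One could equally well extract it from Corollary~\ref{unitarytensorint}.)

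For the first identity, I would expand the trace in coordinates as $\tr(U^*X) = \sum_{a,b}\overline{U_{ba}}\,X_{ba}$ and compute the $(p,q)$ entry of the integrand to be $U_{pq}\sum_{a,b}\overline{U_{ba}}X_{ba}$. Integrating term by term and applying the moment formula with $(i,j,k,l)=(p,q,b,a)$ leaves $\frac1d\sum_{a,b}\delta_{pb}\delta_{qa}X_{ba} = \frac1d X_{pq}$, whence $\int U\,\tr(U^*X)\,d\mu = \frac1d X$, as claimed.

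For the second identity, rather than redoing the coordinate computation I would deduce it from the first. The key observation is that $\overline{\tr(UX^*)} = \tr\bigl((UX^*)^*\bigr) = \tr(U^*X)$, so that $\lvert\tr(UX^*)\rvert^2 = \tr(UX^*)\,\tr(U^*X)$. Pulling the scalar factor $\tr(U^*X)$ inside the matrix integral and using cyclicity of the trace turns the integral into $\tr\bigl(\bigl[\int U\,\tr(U^*X)\,d\mu\bigr]X^*\bigr)$; substituting the first identity then yields $\tr\bigl(\tfrac1d X X^*\bigr) = \frac1d\tr(X^*X)$.

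The computations here are entirely routine, so there is no serious obstacle; the only thing demanding care is the bookkeeping of indices and complex conjugates when passing between the operator identity $\int \vec(U)\vec(U)^*\,d\mu = \frac1d I_d\otimes I_d$ and the scalar moment $\int U_{ij}\overline{U_{kl}}\,d\mu$, and in particular keeping the conventions for $\vec(\cdot)$ and for the trace inner product straight so that the Kronecker deltas land in the correct positions.
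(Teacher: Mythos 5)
Your proof is correct and rests on the same key input as the paper's, namely the second-moment identity encoded in Lemma~\ref{unitaryprojint} (equivalently the formula for $\int U\otimes U^*\,d\mu$), so the approach is essentially the same; the only cosmetic difference is that you work entrywise and deduce the second identity from the first via $\overline{\tr(UX^*)}=\tr(U^*X)$, whereas the paper evaluates both integrals directly against $P=\int U\otimes U^*\,d\mu(U)$.
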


\begin{proof}
This follows from the fact that the two integrals can be expressed as $(\mathrm{id}\otimes\tr)\bigl(P(I_d\otimes X)\bigr),$ and $\tr(P(X^*\otimes X))$ respectively, where $P = \int_{U \in \mathcal{U}(d)} U\otimes U^* d\mu(U)$. By Equation \ref{unitarytensorint} this is just $\frac{1}{d}\sum_{i,j} E_{ij}\otimes E_{ji}$, and so we get, respectively,
\[
\frac{1}{d} \sum_{i,j} \tr (XE_{ji}) E_{ij} = \frac{1}{d}X, \quad \mathrm{and} \quad \frac{1}{d}\sum_{i,j} \tr (X^*E_{ij})\tr (XE_{ji}) = \frac{1}{d}\sum_{i,j}|x_{ij}|^2,
\]
where $X = (x_{ij})$, which completes the proof.
\end{proof}

Using these facts, in the next pair of results we can derive useful properties of the $L(\cdot)$ map.
Before beginning the proofs in earnest, as preparation we briefly discuss the Haar integral over the group $\mathcal U(\mathcal A^\prime) = \mathcal A^\prime \cap \mathcal U(d)$ and explain some facts that we will use to simplify expressions in the analysis below. First of all, the group of unitaries in $\mathcal{A}'$ is, as a group, simply the product of the groups $I_{m_k}\otimes \mathcal U(n_k)$. The Haar measure on these component groups is just the Haar measure on each $\mathcal U(n_k)$, and so the Haar measure on the finite product of these groups is just the product of these Haar measures; thus we have 
\[
\int_{U\in \mathcal U(\mathcal{A}')} X d\mu(U) = \int_{U_1 \in \mathcal U(n_1)}\cdots \int_{U_r \in \mathcal U(n_r)} X d\mu(U_r) \cdots d\mu(U_1), 
\] 
and indeed the right-hand-side can be arranged into any permutation of the groups $\mathcal U(n_j)$ \cite{hewittross}. 
In our evaluation of integrals below, our integrand will be an expression containing only a small number of the $U_j$; we will as a matter of course rewrite all integrals so that integrals over $\mathcal U(n_j)$ where there is no appearance of $U_j$ or $U_j^*$ inside the integrals become the innermost integrals. This is because such integrals will reduce to trivial integrals; and since we then integrate over normalized Haar measure, these inner integrals integrate to $1$, and thus no longer appear explicitly. We will do all of this implicitly, so as not to clutter notation. For example, 
\begin{align*}
\int_{U\in \mathcal U(\mathcal{A}')} U_1 d\mu(U) &= \int_{U_1\in \mathcal U(n_1)}U_1 \biggl(\cdots \int_{U_{r-1}\in \mathcal U(n_{r-1})}\underbrace{\biggl[\int_{U_r \in \mathcal U(n_r)} d\mu(U_r)\biggr]}_{=1}d\mu(U_{r-1})\cdots \biggr) d\mu(U_1)  \\
& = \int_{U_1 \in \mathcal U(n_1)} \biggl(\cdots \underbrace{ \int_{U_{r-1}\in \mathcal U(n_{r-1})} 1 d\mu(U_{r-1})}_{=1}\cdots \biggr) d\mu(U_1) \\
&= \int_{U_1 \in \mathcal U(n_1)} U_1 d \mu(U_1).
\end{align*}
Thus, from here on out, we will immediately jump to the simplified form, and all integrals will only be taken over variables that actually appear in a non-trivial way in any given expression. Finally, even for the remaining variables, we will only leave one integral sign, to avoid clutter and confusion; for instance, if the variables $U_j, U_k$ appear inside an integration, the expression $\int f(U_j,U_k)d\mu(U_j)d\mu(U_k)$ should be understood as $\int_{U_k \in \mathcal U(n_k)}\int_{U_j\in \mathcal U(n_k)} f(U_j,U_k) d\mu(U_j)d\mu(U_k)$. 

\begin{lemma}\label{LofPhi}
Suppose $\Phi : M_d \rightarrow M_d$ is a unital channel that fixes the algebra $\mathcal A$. Then for all $X\in M_d$, 
$$
L(\Phi)(X) = \Phi(X) + (D-1)\mathcal E_{\mathcal{A}}(X) + \sum_{k: n_k>1} \frac{1}{n_k^2-1}\bigl(\widehat{\Phi_k}(X) - \widehat{\delta_{n_k}}(X)\bigr).
$$
\end{lemma}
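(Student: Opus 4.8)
The plan is to evaluate $L(\Phi)(X)$ by inserting the explicit block forms of the objects involved and then performing the Haar integration block by block. Writing $U = \oplus_k I_{m_k}\otimes U_k$ and $K_i = \oplus_k I_{m_k}\otimes K_{ik}$, the defining inner product expands as $\langle U, K_i\rangle_{\mathcal{A}'} = \sum_k n_k\tr(U_k^* K_{ik})$, so that
\[
\bigl|\langle U, K_i\rangle_{\mathcal{A}'}\bigr|^2 = \sum_{a,b} n_a n_b \, \tr(U_a^* K_{ia})\, \tr(K_{ib}^* U_b).
\]
Since conjugation acts block-wise as $(UXU^*)_{kl} = (I_{m_k}\otimes U_k)X_{kl}(I_{m_l}\otimes U_l^*)$, the $(k,l)$ block of $L(\Phi)(X)$ is a sum over $i,a,b$ of Haar integrals of the shape $\int (I_{m_k}\otimes U_k)X_{kl}(I_{m_l}\otimes U_l^*)\,\tr(U_a^* K_{ia})\,\tr(K_{ib}^* U_b)\,d\mu$. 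Because the $U_j$ are mutually independent and Haar-distributed, such an integral vanishes unless, for every index $j$, the number of factors $U_j$ equals the number of factors $U_j^*$; tracking the four relevant factors $U_k$, $U_l^*$, $U_a^*$, $U_b$, this balance condition is precisely $\{k,b\}=\{l,a\}$ as multisets.

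For the off-diagonal blocks $k\neq l$ the balance condition forces $a=k$, $b=l$, and the $U_k$- and $U_l$-integrals decouple into the first moments of Corollary~\ref{integraltrace}, namely $\int U\,\tr(U^*A)\,d\mu = \tfrac1n A$ (and its adjoint form $\int U^*\,\tr(AU)\,d\mu = \tfrac1n A$). These produce factors $\tfrac{1}{n_k}K_{ik}$ and $\tfrac{1}{n_l}K_{il}^*$; after the $n_k n_l$ prefactor cancels, the block collapses to $\sum_i (I_{m_k}\otimes K_{ik})X_{kl}(I_{m_l}\otimes K_{il}^*) = \Phi(X)_{kl}$. Thus the off-diagonal blocks of $L(\Phi)(X)$ equal those of $\Phi(X)$, consistent with the fact that $\mathcal{E}_{\mathcal{A}}$ and the hat-terms are supported only on diagonal blocks.

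For the diagonal blocks $k=l$ the balance condition reduces to $a=b=:c$, which I would split into $c\neq k$ and $c=k$. When $c\neq k$ the $U_c$- and $U_k$-integrals again decouple: the $U_k$-integral is the partial depolarization $\int (I_{m_k}\otimes U_k)X_{kk}(I_{m_k}\otimes U_k^*)\,d\mu = (\mathrm{id}_{m_k}\otimes\delta_{n_k})(X_{kk}) = \mathcal{E}_{\mathcal{A}}(X)_{kk} = \widehat{\delta_{n_k}}(X)_{kk}$, while the $U_c$-integral is the second moment $\int|\tr(U_c^* K_{ic})|^2 d\mu = \tfrac{1}{n_c}\tr(K_{ic}^*K_{ic})$ of equation~\eqref{intXwithUsq}. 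Summing over $i$ and using that each $\Phi_c$ is a unital channel, so $\sum_i\tr(K_{ic}^*K_{ic}) = \tr(I_{n_c}) = n_c$, and then over $c\neq k$ gives the scalar $\sum_{c\neq k} n_c^2 = D - n_k^2$, hence a contribution $(D-n_k^2)\,\mathcal{E}_{\mathcal{A}}(X)_{kk}$.

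The crux is the remaining term $c=k$, the genuine fourth-moment integral
\[
n_k^2 \sum_i \int_{U_k \in \mathcal{U}(n_k)} (I_{m_k}\otimes U_k)\, X_{kk}\, (I_{m_k}\otimes U_k^*)\, \bigl|\tr(U_k^* K_{ik})\bigr|^2 \, d\mu(U_k),
\]
in which $U_k$ appears twice and $U_k^*$ twice; this is the step that parallels the original Watrous computation and where the factor $\tfrac{1}{n_k^2-1}$ enters. I would expand $X_{kk}=\sum_\alpha A_\alpha\otimes B_\alpha$ to isolate the single-factor integral $G(B) := \int U B U^* \,|\tr(U^*K)|^2\, d\mu$ on $M_{n_k}$, and evaluate $G$ entrywise via the second-order Weingarten formula on $\mathcal{U}(n_k)$ (with $\mathrm{Wg}(\mathrm{id}) = \tfrac{1}{n^2-1}$ and $\mathrm{Wg}((12)) = \tfrac{-1}{n(n^2-1)}$). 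The four resulting terms are combinations of $KBK^*$, $\tr(K^*K)\tr(B)I$, $\tr(K^*KB)I$, and $\tr(B)KK^*$; summing over $i$ and invoking both trace preservation ($\sum_i K_{ik}^*K_{ik}=I$) and unitality ($\sum_i K_{ik}K_{ik}^*=I$) of $\Phi_k$ collapses the sum to $\tfrac{1}{n_k^2-1}\Phi_k(B) + \tfrac{n_k^2-2}{n_k^2-1}\delta_{n_k}(B)$. Reassembling, the $c=k$ block equals $\tfrac{n_k^2}{n_k^2-1}\widehat{\Phi_k}(X)_{kk} + \tfrac{n_k^2(n_k^2-2)}{n_k^2-1}\widehat{\delta_{n_k}}(X)_{kk}$, and combining with the $(D-n_k^2)$-term and matching coefficients against $\Phi(X)_{kk}=\widehat{\Phi_k}(X)_{kk}$ and $\mathcal{E}_{\mathcal{A}}(X)_{kk}=\widehat{\delta_{n_k}}(X)_{kk}$ yields exactly the claimed identity. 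The degenerate blocks $n_k=1$, where $\mathcal{U}(1)$-integration is elementary and $\Phi_k=\delta_{n_k}=\mathrm{id}$, would be checked separately and give $D\,X_{kk}$, which is consistent with their omission from the final sum. The main obstacle is organizing this fourth-moment computation cleanly and keeping the bookkeeping of the $n_k^2$ prefactors and the unitality/trace-preservation substitutions precise enough to land on exactly the stated coefficients.
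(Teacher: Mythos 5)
Your proposal is correct and follows essentially the same route as the paper: the same block expansion of $X$ and of $\lvert\langle U,K_i\rangle_{\mathcal{A}'}\rvert^2$, the same phase-balance argument to kill unmatched terms, the same split into off-diagonal blocks, diagonal blocks with $c\neq k$, and the genuine fourth-moment term, with Corollary~\ref{integraltrace} handling the first two cases. The only cosmetic difference is that for the $c=k$ term you propose to re-derive the identity $\sum_i\int UBU^*\lvert\tr(U^*K_i)\rvert^2\,d\mu=\tfrac{1}{n^2-1}\Phi_k(B)+\tfrac{n^2-2}{n^2-1}\delta_{n}(B)$ via the Weingarten formula, whereas the paper simply quotes this as the estimate underlying Watrous' original theorem; the computation and the resulting coefficients are identical, and your $n_k=1$ check matches the paper's as well.
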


\begin{proof}
We first expand, using the following form for a generic element of $M_d$:
\[
X = \sum_{k_1,k_2,s,t} E_{k_1 k_2}\otimes E_{st}\otimes X_{k_1k_2,st},
\]
where for each pair $1 \leq k_1, k_2 \leq r$, the matrices $E_{st}$, with $1\leq s \leq m_{k_1}$ and $1\leq t \leq m_{k_2}$ are matrix units for the $m_{k_1} \times m_{k_2}$ matrices, and $X_{k_1k_2,st}$ is a $n_{k_1}\times n_{k_2}$ matrix. So in this form, $U\in \mathcal A^\prime$ is written $U = \sum_{k=1}^r E_{kk} \otimes I_{m_k} \otimes U_k$ with $U_k\in M_{n_k}$.

From the definition of $L$, and using the expansion of $X$, also keeping in mind the integral note above, we have,
\begin{eqnarray*}
L(\Phi)(X) &=& \sum_i \sum_{k_1,k_2,s,t,j,l} E_{k_1 k_2}\otimes E_{st} \otimes \\ 
& &  \int U_{k_1} X_{k_1k_2,st}U_{k_2}^* n_j n_l\tr (U_j^*K_{ij})\tr (U_lK_{il}^*)d\mu(U_{k_1})d\mu(U_{k_2})d\mu(U_j)d\mu(U_l) ,
\end{eqnarray*}
and so we will analyze this depending on whether or not $k_1 = k_2$.

If $k_1 \neq k_2$, then, using the fact that the Haar integral satisfies $\int_U U \, d\mu(U) =0$, non-zero contributions in this expression can only come from the cases $j=k_1$ and $l=k_2$ where we get using Corollary~\ref{integraltrace},
\begin{align*}
\sum_{k_1,k_2,s,t} E_{k_1k_2}\otimes E_{st} \otimes \biggl(\sum_i \bigl(n_{k_1} \int U_{k_1} \tr (U_{k_1}^*K_{i k_1}) d\mu(U_{k_1})\bigr)X_{k_1k_2,st}\bigl( n_{k_2} \int \bigl( U_{k_2} \tr (U_{k_2}^*K_{i k_2})\bigr)^* d\mu(U_{k_2})  \bigr) \biggr) \\
= \sum_{k_1,k_2,s,t} E_{k_1 k_2}\otimes E_{st} \otimes\biggl(\sum_i K_{ik_1}X_{k_1k_2,st}K_{ik_2}^*\biggr),
\end{align*}
which gives us the off-diagonal blocks of
$\Phi(X) = \sum_i K_i X K_i^*.$
Thus, on the blocks corresponding to $k_1 \neq k_2$, we get a term of the corresponding block form of $\Phi(X)$.

When $k_1= k_2$, we must have $j=l$ for non-zero contributions, and we split this up into terms for which $k_1\neq j$ and $k_1=j$ to get,
\begin{align*}
\sum_{k_1,s,t} E_{k_1k_1}\otimes E_{st}  \otimes \biggl(\bigl(\int U_{k_1} X_{k_1k_1,st}U_{k_1}^*d\mu(U_{k_1})\bigr)\bigl(\sum_{j\neq k_1}n_j^2\sum_i \int \bigl|\tr (U_j^*K_{ij})\bigr|^2d\mu(U_j)\bigr) \biggr) \\
+ \sum_{k_1,s,t} E_{k_1k_1}\otimes E_{st} \otimes n_{k_1}^2\sum_i \int U_{k_1} X_{k_1k_1,st}U_{k_1}^*\bigl|\tr (U_{k_1}^*K_{ik_1})\bigr|^2 d\mu(U_{k_1}) . 
\end{align*}
Now, using the definition of the completely depolarizing channel on $M_{n_k}$, Equation~\ref{intXwithUsq}, and the fact that $\{ K_{ij} \}_i$ defines a channel on $M_{n_j}$ applied to the first term, and then the estimate from the original Watrous Theorem applied to the second term (and assuming for now that $n_{k_1}\neq 1$), we obtain the following:
$$
\sum_{k_1,s,t} E_{k_1k_1}\otimes E_{st} \otimes \biggl(\delta_{n_{k_1}}(X_{k_1k_1,st})\sum_{j\neq k_1}n_j^2 + \frac{n_{k_1}^2}{n_{k_1}^2-1}\Phi_{k_1}(X_{k_1k_1,st}) + \frac{n_{k_1}^2(n_{k_1}^2-2)}{n_{k_1}^2-1}\delta_{n_{k_1}}(X_{k_1k_1,st})\biggr).
$$
The last tensor factor can be rewritten as, 
\begin{align*}\delta_{n_{k_1}}(X_{k_1k_1,st})(D - n_{k_1}^2) + \Phi_{k_1} (X_{k_1k_1,st}) +\frac{1}{n_{k_1}^2-1}\Phi_{k_1}(X_{k_1k_1,st}) + \frac{n_{k_1}^2(n_{k_1}^2-2)}{n_{k_1}^2-1}\delta_{n_{k_1}}(X_{k_1k_1,st}) \\
= \Phi_{k_1}(X_{k_1k_1,st}) + D \delta_{n_{k_1}}(X_{k_1k_1,st}) + \frac{1}{n_{k_1}^2-1}\Phi_{k_1}(X_{k_1k_1,st}) - \frac{n_{k_1}^2}{n_{k_1}^2-1}\delta_{n_{k_1}}(X_{k_1k_1,st});\end{align*}
which follows from splitting $\frac{n_{k_1}^2}{n_{k_1}^2-1}$ into $1 + \frac{1}{n_{k_1}^2-1}$ and $\frac{n_{k_1}^2(n_{k_1}^2-2)}{n_{k_1}^2-1} - n_{k_1}^2 = -\frac{n_{k_1}^2}{n_{k_1}^2-1}$. This latter quantity, in turn, we can write as $-\frac{1}{n_{k_1}^2-1} - 1$, and so we have the last tensor factor rewritten as, 
\[
\Phi_{k_1}(X_{k_1k_1,st}) + (D-1) \delta_{n_{k_1}}(X_{k_1k_1,st}) + \frac{1}{n_{k_1}^2-1}\biggl( \Phi_{k_1}(X_{k_1k_1,st}) - \delta_{n_{k_1}}(X_{k_1k_1,st})\biggr) . 
%- \delta_{n_{k_1}}(X_{k_1k_1,st}).
\]

We must also consider the case when $n_{k_1} = 1$, in which case we cannot use the Watrous Theorem as written, to avoid dividing by $n_{k_1}^2-1=0$. In this case, note that $\Phi_{k_1}$ is a channel on $M_1 \equiv \C$; but there is only one such map, which is the identity map. In this case, $\Phi_{k_1} = \delta_{n_{k_1}}$ is just the identity map on $\C$, and so in that case, we would write the relevant integral in the last tensor factor as, 
\[
n_{k_1}^2 \sum_i\int U_{k_1} X_{k_1k_1,st}U_{k_1}^* |\tr (U_{k_1}^*K_{i k_1})|^2 = X_{k_1k_1,st} = \Phi_{k_1}(X_{k_1k_1,st}) = \delta_{n_{k_1}}(X_{k_1k_1,st}).
\]
So in this case, the diagonal term would be
\[
(D-1)\delta_{n_{k_1}}(X_{k_1k_1,st}) + \Phi_{k_1}(X_{k_1k_1,st}).
\]

Hence, summing over $k_1, k_2$ in the decomposition of $L(\Phi)(X)$, we first get a copy of $\Phi(X)$. Further, the $(D-1)\delta_{n_{k_1}}(X_{k_1k_1,st})$ down the diagonal combine to give us $(D-1) \mathcal E_{\mathcal{A}}(X)$. Thus, bringing everything together, we have
$$
L(\Phi)(X) = \Phi(X) + (D-1) \mathcal E_{\mathcal{A}}(X) +\sum_{k:n_{k}>1} \frac{1}{n_{k}^2-1}\bigl(\widehat{\Phi_{k}}(X) - \widehat{\delta_{n_{k}}}(X)\bigr),
$$
and this completes the proof.
\end{proof}

\begin{lemma}\label{LofPhihat}
Given $\mathcal A$ and $\Phi$ as above, for each unital channel $\Phi_k$, we have for all $X\in M_d$, 
$$
L(\widehat{\Phi_k})(X) = n_k^2 \mathcal E_{\mathcal{A}}(X) + \frac{n_k^2}{n_k^2-1}\bigl(\widehat{\Phi_k}(X) - \widehat{\delta_{n_k}}(X)\bigr), 
$$ 
if $n_k>1$, and otherwise,
$$
L(\widehat{\Phi_k})(X) = \mathcal E_{\mathcal{A}}(X).
$$
\end{lemma}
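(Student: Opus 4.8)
The plan is to compute $L(\widehat{\Phi_k})(X)$ directly, following the same strategy as the proof of Lemma~\ref{LofPhi} but exploiting one crucial simplification. Since the Kraus operators $\widehat{K_{ik}}$ are supported only on the $k$th block, the inner product in the weight collapses to $\langle U, \widehat{K_{ik}}\rangle_{\mathcal{A}'} = n_k \tr(U_k^* K_{ik})$, depending on the single unitary $U_k$ rather than on all of the $U_j$. Thus $L(\widehat{\Phi_k})(X) = n_k^2 \sum_i \int UXU^* \bigl|\tr(U_k^* K_{ik})\bigr|^2 d\mu(U)$, and I would expand $X = \sum_{k_1,k_2,s,t} E_{k_1 k_2}\otimes E_{st} \otimes X_{k_1k_2,st}$ and $U = \sum_\ell E_{\ell\ell}\otimes I_{m_\ell}\otimes U_\ell$ exactly as before, reducing everything to a block-by-block Haar integral.

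First I would dispose of the off-diagonal blocks $k_1 \neq k_2$. Here the simplification pays off immediately: the weight supplies no factor of $U_{k_1}$ or $U_{k_2}$ unless one of these indices equals $k$, and since $k_1 \neq k_2$ they cannot both equal $k$; hence at least one of the vanishing integrals $\int U_{k_1}\, d\mu(U_{k_1})$ or $\int U_{k_2}^*\, d\mu(U_{k_2})$ occurs, killing the block. So, unlike in Lemma~\ref{LofPhi}, \emph{all} off-diagonal blocks are zero. For the diagonal blocks $k_1 = k_2 = \ell$ I would split into $\ell \neq k$ and $\ell = k$. When $\ell \neq k$ the conjugation variable $U_\ell$ and the weight variable $U_k$ are independent, so the integral factors as $\delta_{n_\ell}(X_{\ell\ell,st})$ times $\sum_i \int |\tr(U_k^* K_{ik})|^2 d\mu(U_k)$; by \ref{intXwithUsq} together with the channel condition $\sum_i K_{ik}^* K_{ik} = I_{n_k}$ this scalar equals $1$, leaving $n_k^2\,\delta_{n_\ell}(X_{\ell\ell,st})$.

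The one genuinely substantive block is $\ell = k$, where $n_k^2 \sum_i \int U_k X_{kk,st} U_k^* |\tr(U_k^* K_{ik})|^2 d\mu(U_k)$ is precisely the expression evaluated via the original Watrous Theorem inside Lemma~\ref{LofPhi}, namely $\frac{n_k^2}{n_k^2-1}\Phi_k(X_{kk,st}) + \frac{n_k^2(n_k^2-2)}{n_k^2-1}\delta_{n_k}(X_{kk,st})$. I would then reassemble: the $\ell \neq k$ diagonal terms together with the $n_k^2\,\delta_{n_k}(X_{kk,st})$ piece split off from the $\ell = k$ block combine into $n_k^2\,\mathcal{E}_{\mathcal{A}}(X)$, while the leftover $\ell = k$ contribution rearranges into $\frac{n_k^2}{n_k^2-1}\bigl(\widehat{\Phi_k}(X) - \widehat{\delta_{n_k}}(X)\bigr)$. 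The only bookkeeping needed is the identity $n_k^2 - \frac{n_k^2}{n_k^2-1} = \frac{n_k^2(n_k^2-2)}{n_k^2-1}$, which reconciles the two depolarizing coefficients.

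Finally, for the degenerate case $n_k = 1$ the map $\Phi_k$ on $M_1 \cong \C$ is forced to be the identity, the weight reduces to $|U_k|^2 |K_{ik}|^2 = |K_{ik}|^2$, a $U$-independent constant summing to $1$, and $L(\widehat{\Phi_k})(X)$ becomes the bare average $\int_{\mathcal{U}(\mathcal{A}')} UXU^* d\mu(U)$, which is exactly $\mathcal{E}_{\mathcal{A}}(X)$ since Haar averaging of conjugation over $\mathcal{U}(\mathcal{A}')$ is the trace-preserving conditional expectation onto $(\mathcal{A}')' = \mathcal{A}$. Given Lemma~\ref{LofPhi}, this computation is largely routine; I expect the only delicate point to be the coefficient accounting in the reassembly step, where one must correctly attribute the $n_k^2\,\delta_{n_k}$ term from the $\ell=k$ block to $\mathcal{E}_{\mathcal{A}}$ rather than to the $\widehat{\delta_{n_k}}$ correction, so that the final two-term form emerges cleanly.
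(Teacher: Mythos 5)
Your proposal is correct and follows essentially the same route as the paper's own proof: the collapse of the weight to $n_k\tr(U_k^*K_{ik})$, the vanishing of all off-diagonal blocks, the factorization of the $\ell\neq k$ diagonal blocks into $n_k^2\,\delta_{n_\ell}$, the application of the original Watrous theorem on the $\ell=k$ block, and the splitting off of $n_k^2\,\delta_{n_k}$ to assemble $n_k^2\,\mathcal E_{\mathcal A}(X)$ all match the paper's argument, including the coefficient identity. Your handling of the $n_k=1$ case (constant weight, bare Haar average equals $\mathcal E_{\mathcal A}$) is a slightly more global phrasing of the same observation the paper makes blockwise.
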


\begin{proof}
Notice that $\langle U, \widehat{K_{i k}}\rangle_{\mathcal{A}'} = n_k \tr (U_k^*K_{ik})$ for all $i$ since all other blocks of $\widehat{K_{ik}}$ are $0$. Thus when we do the same calculation as in the previous Lemma proof with $\widehat{\Phi_{k}}$ instead of $\Phi$, we get in the last tensor factor, 
$$
%\sum_{j,l,s,t} 
\sum_i \int U_{k_1} X_{k_1 k_2 ,st} U_{k_2}^* n_k^2|\tr(U_k^*K_{ik})|^2d\mu(U_{k_1})d\mu(U_k)d\mu(U_{k_2}) . 
$$ 
For $k_1 \neq k_2$, to get a non-zero contribution we must have in the integral, $k_1=k$ and $k_2=k$, a contradiction. So we only get (potentially) non-zero terms when $k_1=k_2$. 

If $k_1=k_2 \neq k$, we get
$$
\biggl( \int U_{k_1} X_{k_1 k_1,st}U_{k_1}^*d\mu(U_{k_1})\biggr) \biggl( n_k^2 \sum_i \int |\tr (U_k^*K_{ik})|^2d\mu(U_k)\biggr) ,
$$
and this simplifies, from the definition of the completely depolarizing channel applied to the first term and Corollary~\ref{integraltrace} and that $\{ K_{ik}\}_i$ define a channel on $M_{n_k}$ applied to the second term, to:
$$
n_k^2 \delta_{n_{k_1}}(X_{k_1 k_1,st}).
$$

Otherwise, if $k_1=k = k_2$, and $n_k>1$, we get using the original Watrous Theorem, 
$$
\frac{n_k^2}{n_k^2-1}\biggl(\Phi_k(X_{kk,st}) + (n_k^2-2)\delta_{n_k}(X_{kk,st})\biggr) . 
$$
From this, we take a term of the form $n_k^2 \delta_{n_k}(X_{kk,st})$ to combine with the other diagonal terms, giving us $n_k^2 \mathcal E_{\mathcal{A}}(X)$, and leaving us with
$$
\frac{n_k^2}{n_k^2-1}\bigl(\widehat{\Phi_k}(X) - \widehat{\delta_{n_k}}(X)\bigr)
$$
from the remaining terms.

Finally, if $n_k=1$, we just get
$$
n_k^2  \delta_{n_k}(X_{kk,st})
$$
since both $\delta_{n_k}$ and $\Phi_k$ are just the identity map on scalars.

So, we can sum over $k_1,k_2$ and the $n_k^2 \delta_{n_{k_1}}(X_{k_1 k_1,st})$ terms will combine to give us $n_k^2 \mathcal E_{\mathcal{A}}(X)$, plus, if $n_k>1$, a term of the form $\frac{n_k^2}{n_k^2-1}\bigl(\widehat{\Phi_k} - \widehat{\delta_{n_k}}\bigr)(X)$.
\end{proof}

Applying the previous Lemma to the case $\Phi_k = \delta_{n_k}$, we immediately have the following: 
\begin{equation}\label{Lofdeltahat} 
L(\widehat{\delta_{n_k}})(X) = n_k^2\mathcal E_{\mathcal{A}}(X) .
\end{equation}

Now we prove the theorem, with explicit constants given.

\begin{theorem}
Let $\Phi : M_d \rightarrow M_d$ be a unital channel that fixes the unital algebra $\mathcal{A}=\oplus_{i=1}^r M_{m_k}\otimes I_{n_k}$. Let $D = \dim (\mathcal{A}')$ and let $\widehat{r}$ be the number of direct summands of $\mathcal{A}'$ for which the matrix component satisfies $n_i > 1$.

If $r=1$, and $\mathcal{A}' = I_{m_1} \otimes M_{n_1}$, with $n_1 > 1$, we have that
\[
\frac{1}{n_1^2-1}\bigl(\Phi(X) + (n_1^2-2)\mathcal E_{\mathcal{A}}(X)\bigr)
\]
is mixed unitary.
If $r>1$, we have that
\[\frac{1}{D - \widehat{r} + \sum_{k: n_k > 1} n_k^2}\bigl(\Phi(X) + (D - \widehat{r} - 1 + \sum_{k : n_k>1} n_k^2)\mathcal E_{\mathcal{A}}(X)\bigr)
\]
is mixed unitary.
\end{theorem}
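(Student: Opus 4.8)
The engine of the proof is the map $L(\cdot)$. For any completely positive $\Psi$ with Kraus operators in $\mathcal A'$, we may rewrite its definition as
\[
L(\Psi)(X) = \int_{U\in\mathcal U(\mathcal A')} \Phi_U(X)\Bigl(\sum_i \bigl|\langle U, K_i\rangle_{\mathcal A'}\bigr|^2\Bigr) d\mu(U),
\]
so that $L(\Psi)$ is a Haar mixture of unitary adjunctions $\Phi_U$ with $U\in\mathcal U_{\mathcal A'}$, weighted by a nonnegative function; dividing by its total weight produces a genuine mixed unitary channel, and since every $\Phi_U$ is trace preserving this total weight can be read off from the trace. The plan is therefore to exhibit the target map as a \emph{nonnegative} combination of such $L$-images, whence it is automatically a positive combination of the $\Phi_U$, and then to compute the normalization. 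I will treat $r=1$ and $r>1$ separately, as they behave quite differently (indeed the $r=1$ constants are not the specialization of the $r>1$ constants).

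When $r=1$ the embedded block is all of $M_d$, so $\widehat{\Phi_1}=\Phi$, $\widehat{\delta_{n_1}}=\mathcal E_{\mathcal A}$, and $D=n_1^2$. Lemma~\ref{LofPhi} then collapses to a single identity with no separate error summand:
\[
L(\Phi) = \Phi + (n_1^2-1)\mathcal E_{\mathcal A} + \tfrac{1}{n_1^2-1}\bigl(\Phi - \mathcal E_{\mathcal A}\bigr) = \tfrac{n_1^2}{n_1^2-1}\bigl(\Phi + (n_1^2-2)\mathcal E_{\mathcal A}\bigr).
\]
Thus $L(\Phi)$ is already a multiple of the target. Its total weight, computed via \eqref{intXwithUsq} and $\sum_i K_{i1}^*K_{i1}=I_{n_1}$, equals $n_1^2$, so $\tfrac{1}{n_1^2}L(\Phi)$ is a mixed unitary channel, and it coincides with $\tfrac{1}{n_1^2-1}\bigl(\Phi+(n_1^2-2)\mathcal E_{\mathcal A}\bigr)$.

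For $r>1$ the obstruction is that the error summands $\tfrac{1}{n_k^2-1}(\widehat{\Phi_k}-\widehat{\delta_{n_k}})$ appear with \emph{positive} sign both in $L(\Phi)$ (Lemma~\ref{LofPhi}) and in every $L(\widehat{\Phi_k})$ (Lemma~\ref{LofPhihat}), so no nonnegative combination of $L(\Phi)$ and the $L(\widehat{\Phi_k})$ can cancel them. The resolution, and the step I expect to require the most care, is the observation that the auxiliary map $\widehat{\delta_{n_k}}-\tfrac{1}{n_k^2}\widehat{\Phi_k}$ is \emph{itself} completely positive, with Kraus operators in $\mathcal A'$, so that $L$ may legitimately be applied to it. On Choi matrices this reduces to $J(\Phi_k)\le n_k\,(I_{n_k}\otimes I_{n_k})$, which holds since $J(\Phi_k)\ge0$ has operator norm at most its trace $\tr J(\Phi_k)=n_k$ (the channel $\Phi_k$ being trace preserving). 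Granting this, $L\bigl(\widehat{\delta_{n_k}}-\tfrac{1}{n_k^2}\widehat{\Phi_k}\bigr)$ is again a positive combination of unitary adjunctions $\Phi_U$, $U\in\mathcal U_{\mathcal A'}$.

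It then remains to assemble. Evaluating the auxiliary $L$-image via Lemma~\ref{LofPhihat} and \eqref{Lofdeltahat} gives
\[
L\bigl(\widehat{\delta_{n_k}}-\tfrac{1}{n_k^2}\widehat{\Phi_k}\bigr) = (n_k^2-1)\,\mathcal E_{\mathcal A} - \tfrac{1}{n_k^2-1}\bigl(\widehat{\Phi_k}-\widehat{\delta_{n_k}}\bigr).
\]
Adding these over all $k$ with $n_k>1$ to the expression for $L(\Phi)$, the error summands cancel exactly while the $\mathcal E_{\mathcal A}$ coefficients combine as $(D-1)+\sum_{k:n_k>1}(n_k^2-1)$, yielding
\[
L(\Phi) + \sum_{k:\,n_k>1} L\bigl(\widehat{\delta_{n_k}}-\tfrac{1}{n_k^2}\widehat{\Phi_k}\bigr) = \Phi + \Bigl(D-\widehat r - 1 + \sum_{k:\,n_k>1}n_k^2\Bigr)\mathcal E_{\mathcal A}.
\]
The left-hand side is a positive combination of the $\Phi_U$ with $U\in\mathcal U_{\mathcal A'}$; because $\Phi$ and $\mathcal E_{\mathcal A}$ are trace preserving, its total weight is $1+\bigl(D-\widehat r-1+\sum_{k:n_k>1}n_k^2\bigr)=D-\widehat r+\sum_{k:n_k>1}n_k^2$. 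Dividing by this weight turns the combination into a convex one and recovers precisely the normalized map in the statement, now displayed as a mixed unitary channel, completing the $r>1$ case.
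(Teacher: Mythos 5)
Your proposal is correct and follows essentially the same route as the paper: it rests on the same two lemmas for $L(\Phi)$ and $L(\widehat{\Phi_k})$, the same Choi-matrix argument that $\widehat{\delta_{n_k}}-\tfrac{1}{n_k^2}\widehat{\Phi_k}$ is completely positive, and the same cancellation of the error summands followed by normalization via trace preservation. The only differences are presentational (you substitute directly into the $r=1$ identity and add the auxiliary $L$-images rather than first rearranging), and your computations and constants all check out.
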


\begin{proof}
We begin with the statement of Lemma \ref{LofPhi}, combined with that of Lemma \ref{LofPhihat}:
$$
L(\Phi)(X) = \Phi(X) + (D-1)\mathcal E_{\mathcal{A}}(X) + \sum_{k: n_k >1} \frac{1}{n_k^2-1}\bigl(\widehat{\Phi_k} - \widehat{\delta_{n_k}}\bigr)(X) , 
$$
and
$$
L(\widehat{\Phi_k})(X) = n_k^2 \mathcal E_{\mathcal{A}}(X) + \frac{n_k^2}{n_k^2-1}\bigl(\widehat{\Phi} - \widehat{\delta_{n_k}}\bigr)(X)
$$
for all $n_k > 1$. Thus we can write
$$
L(\Phi)(X) = \Phi(X) + (D-1)\mathcal E_{\mathcal{A}}(X) + \sum_{k : n_k >1}\biggl(\frac{1}{n_k^2}L(\widehat{\Phi_k})(X) - \mathcal E_{\mathcal{A}}(X)\biggr) ,
$$
which we can rearrange to obtain
\begin{equation}\label{Lsrearranged}
L(\Phi)(X) - \sum_{k : n_k > 1} \frac{1}{n_k^2}L(\widehat{\Phi_k})(X) = \Phi(X) + (D - \widehat{r} - 1)\mathcal E_{\mathcal{A}}(X).
\end{equation}

Before continuing, we first consider the case where $r=1$; that is, where $\mathcal{A} = M_m \otimes I_n$ and $\mathcal{A}' = I_m\otimes M_n$ (and $n> 1$, as the $r=1=n$ case is trivial). Then $\widehat{r} = 1$, and we have that $\Phi_1 = \widehat{\Phi_1} = \Phi$, since there are no blocks other than the first block to zero out, in order to create $\widehat{\Phi_1}$. In this case, $n_1 = n$, and so we see that $L(\Phi) - \frac{1}{n^2}L(\widehat{\Phi_1})$ is just $\frac{n^2-1}{n^2}L(\Phi)$, and, using Equation \ref{Lsrearranged} (and $D=n^2$), we have
\[
\frac{n^2-1}{n^2}L(\Phi)(X) = \Phi(X) + (n^2-2)\mathcal E_{\mathcal{A}}(X).
\]

In any other case, for any $k$ for which $n_k > 1$, we have that $\Phi_k : M_{n_k}\rightarrow M_{n_k}$ is a unital channel.  
%and so both of the partial traces of its Choi matrix are just $I_{n_k}$ and 
Thus $\tr (J(\Phi_k)) = n_k$, and since $J(\Phi_k) \geq 0$, we have that
$
n_k I_{n_k}\otimes I_{n_k} \geq J(\Phi_k).
$
Hence
\[
J(\delta_{n_k}) = \frac{1}{n_k} I_{n_k}\otimes I_{n_k} \geq \frac{1}{n_k^2}J(\Phi_k) ,
\]
and so the map $\delta_{n_k} - \frac{1}{n_k^2}\Phi_k$ is a completely positive map.
From this, we obtain that the map $\widehat{\delta_{n_k} - \frac{1}{n_k^2}\Phi_k} = \widehat{\delta_{n_k}} - \frac{1}{n_k^2}\widehat{\Phi_k}$ is also completely positive as it is just the direct sum of $\delta_{n_k} - \frac{1}{n_k^2}\Phi_k$ with the zero map a number of times.

Finally, using the fact that $L(\Phi + \Psi) - L(\Psi) = L(\Phi)$ for completely positive maps with Kraus operators in $\mathcal A^\prime$, we have that 
%\[
%L(\Phi) + L(\Psi) = \int_U UXU^* \langle \mathrm{vec}(U), (J(\Phi) + J(\Psi))\mathrm{vec}(U)\rangle d\mu(U) = L(\Phi + \Psi) ,
%\]
\[
L(\widehat{\delta_{n_k} - \frac{1}{n_k^2}\Phi_k})(X) = L(\widehat{\delta_{n_k}})(X) - \frac{1}{n_k^2} L(\widehat{\Phi_k})(X)
\]
is a positive combination of unitary adjunctions.  
%since the left-hand-side is a completely positive map.

Thus we may add $L(\widehat{\delta_{n_k}})(X)$ to both sides of Equation \ref{Lsrearranged}. By Equation \ref{Lofdeltahat}, this is equivalent to adding $n_k^2 \mathcal E_{\mathcal{A}}(X)$, and so we obtain
\[
L(\Phi)(X) + \sum_{k: n_k > 1} L(\widehat{\delta_{n_k} - \frac{1}{n_k^2}\Phi_k})(X) = \Phi(X) + \bigl(D - \widehat{r}-1 + \sum_{k : n_k >1} n_k^2 \bigr)\mathcal E_{\mathcal{A}}(X) .
\]

In all cases, we now have on the left-hand-side a positive combination of terms of the form $L(\Psi)$ where $\Psi$ is completely positive, and so this is a positive combination of unitary adjunctions. Hence the right-hand-side is now simply a positive combination of $\Phi$ and $\mathcal E_{\mathcal{A}}$. Thus, after suitably normalizing, the left-hand-side will be an expression of the right-hand-side as a mixed unitary. In particular, we obtain either
\[
\frac{1}{n^2}L(\Phi) = \frac{1}{n^2-1}\bigl(\Phi(X) + (n^2-2)\mathcal E_{\mathcal{A}}(X)\bigr)
\]
is mixed unitary in the case that $r=1$, or
\begin{align*}
\frac{1}{D-\widehat{r} + \sum_{k : n_k >1}n_k^2}\biggl(L(\Phi)(X) + \sum_{k : n_k > 1}L(\widehat{\delta_{n_k} - \frac{1}{n_k^2}\Phi_k})(X)\biggr) \\
= \frac{1}{D - \widehat{r} + \sum_{k: n_k >1}n_k^2} \biggl( \Phi(X) + (D - \widehat{r} - 1 + \sum_{k : n_k>1} n_k^2 )\mathcal E_{\mathcal{A}}(X)\biggr)
\end{align*}
is mixed unitary when $r > 1$. This completes the proof.
\end{proof}

%%%%%%%%%%%%%%%%%

\section{Application: All Unital Channels are Eventually Mixed Unitary}

In this section we prove that every unital quantum channel has the property that some power of it becomes mixed unitary. This involves proving several supporting results that may be of independent interest, and, at the final stage, applying our Theorem~\ref{main}. We note for the reader that following the logical flow of this section does not require the results of the previous section until the final result proof.

\subsection{Asymptotic Result for Primitive Unital Channels}

We first show how Watrous' Theorem~\ref{watrous} yields an asymptotic result for the case of primitive unital channels. We begin with a result that we expect is well-known (see \cite{carlen} for instance), but for completeness we provide a short proof.

\begin{lemma}\label{cond-exp-mu}
Let $\mathcal{A}$ be any unital $*$-subalgebra of $M_d$. Let $\mathcal{E}_{\mathcal{A}}$ be the trace preserving conditional expectation onto $\mathcal{A}$. Then $\mathcal{E}_{\mathcal{A}}$ is a mixed unitary channel.
\end{lemma}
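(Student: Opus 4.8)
The plan is to exhibit $\mathcal{E}_{\mathcal{A}}$ as a Haar average of unitary conjugations drawn from the commutant, and then to collapse this continuous average to a finite convex combination via a compactness/Carath\'eodory argument. Concretely, I would introduce the averaging map
$$
\Psi(X) = \int_{U \in \mathcal{U}(\mathcal{A}')} U X U^* \, d\mu(U),
$$
show that $\Psi = \mathcal{E}_{\mathcal{A}}$, and observe that $\Psi$ is manifestly a (continuous) mixture of unitary adjunctions, so that only the discretization step remains.

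For the identification $\Psi = \mathcal{E}_{\mathcal{A}}$, I would check the defining properties of the trace-preserving conditional expectation and then invoke its uniqueness. First, $\Psi$ is completely positive, unital, and trace-preserving, since each $X \mapsto UXU^*$ has these properties and they are preserved under averaging. Second, the range of $\Psi$ lies in $\mathcal{A}$: by left-invariance of the Haar measure, $V\Psi(X)V^* = \Psi(X)$ for every $V \in \mathcal{U}(\mathcal{A}')$, so $\Psi(X)$ commutes with every unitary in $\mathcal{A}'$, hence with all of $\mathcal{A}'$ (the unitaries span the algebra), and thus $\Psi(X) \in (\mathcal{A}')' = \mathcal{A}$ by the double commutant theorem. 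Third, $\Psi$ fixes $\mathcal{A}$ and is an $\mathcal{A}$-bimodule map: for $A, A_1, A_2 \in \mathcal{A}$, each commutes with every $U \in \mathcal{U}(\mathcal{A}')$, whence $\Psi(A) = A$ and $\Psi(A_1 X A_2) = A_1 \Psi(X) A_2$. Together with trace preservation, uniqueness forces $\Psi = \mathcal{E}_{\mathcal{A}}$. (The trivial-algebra instance of this identity is precisely Lemma~\ref{unitaryprojint}.)

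To finish, I would pass to Choi matrices. Applying $J(\cdot)$ to the displayed integral gives $J(\mathcal{E}_{\mathcal{A}}) = \int_{U \in \mathcal{U}(\mathcal{A}')} \vec(U)\vec(U)^* \, d\mu(U)$, which is the barycenter of the set $S = \{ \vec(U)\vec(U)^* : U \in \mathcal{U}(\mathcal{A}') \}$. Since $\mathcal{U}(\mathcal{A}')$ is compact and $U \mapsto \vec(U)\vec(U)^*$ is continuous, $S$ is a compact subset of the finite-dimensional real space of Hermitian elements of $M_d \otimes M_d$, so its convex hull is compact and contains the barycenter $J(\mathcal{E}_{\mathcal{A}})$. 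By Carath\'eodory's theorem, $J(\mathcal{E}_{\mathcal{A}}) = \sum_i \lambda_i \vec(U_i)\vec(U_i)^*$ for finitely many $U_i \in \mathcal{U}(\mathcal{A}')$ and weights $\lambda_i \geq 0$ summing to $1$; since $J$ is linear and injective, inverting yields $\mathcal{E}_{\mathcal{A}}(X) = \sum_i \lambda_i U_i X U_i^*$, a finite mixed unitary channel.

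The only genuine obstacle is this last step: the definition of a mixed unitary channel demands finitely many unitaries, whereas the natural representation of $\mathcal{E}_{\mathcal{A}}$ is a continuous Haar average. The Carath\'eodory reduction resolves this cleanly, but one must be careful that the relevant set of Choi matrices is both compact and confined to a fixed finite-dimensional space, so that the barycenter indeed lies in the convex hull of finitely many of its extreme points.
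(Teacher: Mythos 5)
Your proof is correct and follows essentially the same route as the paper: both represent $\mathcal{E}_{\mathcal{A}}$ as the Haar average $\int_{U \in \mathcal{U}(\mathcal{A}')} UXU^*\, d\mu(U)$, identify it with the trace-preserving conditional expectation via uniqueness, and then use compactness of $\mathcal{U}(\mathcal{A}')$ to place the average in the convex hull of unitary conjugations. Your Carath\'eodory step at the level of Choi matrices merely makes explicit the finite-mixture reduction that the paper leaves implicit in its ``closed convex hull'' remark.
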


\begin{proof}
Let $\mathcal{U}(\mathcal{A}')$ be the unitary group of the commutant algebra $\mathcal{A}'=\{X\in M_d: A X=X A, \forall A\in \mathcal{A}\}.$
It follows that the conditional expectation can be written as follows for all $X\in M_d$:
\[ \mathcal{E}_{\mathcal{A}}(X)=\int_{U \in \mathcal{U}(\mathcal{A}')} UXU^* d\mu(U) ,\]
where $\mu(U)$ is the normalized Haar measure on $\mathcal{U}(\mathcal{A}')$.
Indeed, it is easy to see that this integral operator is trace preserving, projects onto $\mathcal{A}$, and satisfies the other conditional expectation properties from the invariance of the Haar measure. So by uniqueness the map is $\mathcal E_{\mathcal A}$.

Now as the commutant  $\mathcal{A}'$ is a finite dimensional subalgebra, the group $\mathcal{U}(\mathcal{A}')$ is closed and hence the convex hull of the set $\{UXU^*: U \in \mathcal{U}(\mathcal{A}') \}$ is a closed convex set. Thus $\mathcal{E}_{\mathcal{A}}(X)$ lies in the convex hull of mappings of the form $ UXU^*$, with $U \in \mathcal{U}(\mathcal{A}')$, and so
$\mathcal{E}_{\mathcal{A}}$ is a mixed unitary map.
\end{proof}

\begin{remark}
Note that from the above result, it is clear that the completely depolarizing channel $\delta_d(X) = d^{-1}\tr (X) I_d $, which is the trace preserving conditional expectation onto the trivial algebra $\mathcal A = \{\mathbb{C} I\}$, is a mixed unitary map. A concrete representation of this map can be written down:
\[\delta_d(X) = d^{-1}\tr (X) I_d = \frac{1}{d^2} \sum_{a,b=0}^{d-1} W_{a,b} X W_{a, b}^*, \]
where $W_{a,b}$ are the Weyl-Heisenberg unitaries defined by
\[W_{a,b}= S^a D^b, 0\leq a, b\leq d-1, \]
and $S=\sum_{j=1}^d E_{j+1, j}\in M_d$ is the forward cyclic shift operator and $D= \sum_{j=1}^d \omega^j E_{j,j}\in M_d$ is the `clock operator' with $\omega=\text{exp}(2\pi i/d)$.
\end{remark}

We use the above result to prove the following. Let us denote $\text{MU(d)}$ to be the set of all mixed-unitary channels on $M_d$, which note is a closed convex set of linear maps. See \cite{paulsen} for basic properties of the completely bounded distance measure.

\begin{lemma}\label{primitivelemma}
Let $\Phi:M_d\rightarrow M_d$ be any unital quantum channel. Then
\[ \displaystyle {{\liminf} _{n\to\infty} } \, d_{CB}( \Phi^n, \text{MU(d)}) =0,\]
where $d_{CB}( \Phi^n, \text{MU(d)})$ is the completely bounded distance of $\Phi^n$ from the closed convex set $\text{MU(d)}$.
\end{lemma}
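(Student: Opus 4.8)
The plan is to locate a subsequence of powers $\Phi^{n_k}$ that converges in norm to the trace-preserving conditional expectation onto a $*$-subalgebra of $M_d$; since every such conditional expectation lies in $\mathrm{MU}(d)$ by Lemma~\ref{cond-exp-mu} and $\mathrm{MU}(d)$ is closed, this forces $d_{CB}(\Phi^{n_k},\mathrm{MU}(d))\to 0$ and hence the nonnegative quantity $d_{CB}(\Phi^{n},\mathrm{MU}(d))$ has $\liminf$ equal to $0$. The starting observation is that a unital positive map is a contraction in the operator norm, so $\|\Phi^n\|=1$ for every $n$ and $\Phi$ is power-bounded. Consequently every eigenvalue of $\Phi$, viewed as a linear operator on $M_d$, lies in the closed unit disc, and no eigenvalue of modulus $1$ can carry a nontrivial Jordan block, since such a block would force $\|\Phi^n\|$ to grow at least linearly in $n$. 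Thus $M_d$ splits as a $\Phi$-invariant direct sum $M_d=\mathcal X\oplus\mathcal Y$, where $\mathcal X$ is the semisimple peripheral eigenspace spanned by eigenvectors with eigenvalues $\lambda_1,\dots,\lambda_m$ of modulus $1$, and $\mathcal Y$ collects the generalized eigenspaces for eigenvalues of modulus strictly less than $1$; here $\|\Phi^n|_{\mathcal Y}\|\to 0$.

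Next I would extract the subsequence by a simultaneous recurrence argument. Writing $\Phi|_{\mathcal X}=S\,\mathrm{diag}(\lambda_1,\dots,\lambda_m)\,S^{-1}$ for a fixed invertible $S$, the orbit $(\lambda_1^{\,n},\dots,\lambda_m^{\,n})$ lives in the compact torus $\mathbb T^m$, so by a pigeonhole (Poincar\'e recurrence / simultaneous Diophantine approximation) argument there is a sequence $n_k\to\infty$ with $\lambda_j^{\,n_k}\to 1$ for every $j$ at once. Then $\Phi^{n_k}|_{\mathcal X}=S\,\mathrm{diag}(\lambda_j^{\,n_k})\,S^{-1}\to \mathrm{id}_{\mathcal X}$, while $\Phi^{n_k}|_{\mathcal Y}\to 0$. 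Hence $\Phi^{n_k}\to P$ in norm, where $P$ is the (Riesz) spectral projection onto $\mathcal X$ along $\mathcal Y$; in particular $P$ is idempotent.

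Finally I would identify the limit $P$. Each $\Phi^{n_k}$ is a unital quantum channel, and the properties of being completely positive, unital, and trace preserving are all closed conditions, so $P$ is itself a unital, trace-preserving, completely positive, idempotent map. Its range is the peripheral eigenspace $\mathcal X$, which is a unital $*$-subalgebra $\mathcal B$ of $M_d$ (the peripheral-eigenvalue-algebra structure of unital channels; this can also be extracted from the Choi--Effros product on the range of the idempotent $P$). Since $P$ is then a contractive idempotent from $M_d$ onto the $*$-subalgebra $\mathcal B$ fixing $\mathcal B$ pointwise, Tomiyama's theorem shows $P$ is a conditional expectation onto $\mathcal B$, and being trace preserving it must be the unique such map $\mathcal E_{\mathcal B}$. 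By Lemma~\ref{cond-exp-mu}, $P=\mathcal E_{\mathcal B}\in\mathrm{MU}(d)$, so $d_{CB}(\Phi^{n_k},\mathrm{MU}(d))\le d_{CB}(\Phi^{n_k},P)\to 0$, which yields the claim.

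I expect the main obstacle to be the last step: rigorously establishing that the range $\mathcal X$ of the limiting idempotent $P$ is genuinely a $*$-subalgebra (equivalently, that $P$ is a bona fide conditional expectation rather than merely a norm-one projection onto an operator system). This is precisely where the structure theory of peripheral eigenspaces of unital channels is required; once it is in hand, the remaining identifications and the appeal to Lemma~\ref{cond-exp-mu} are routine. The recurrence extraction of the $n_k$ is the other delicate point, but it is a standard compactness argument on $\mathbb T^m$.
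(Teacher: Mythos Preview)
Your argument is correct and follows the same overall strategy as the paper: exhibit a subsequence $\Phi^{n_k}$ converging to a trace-preserving conditional expectation onto a $*$-subalgebra, then invoke Lemma~\ref{cond-exp-mu}. The difference is only in packaging: the paper compresses the extraction of the subsequence and the identification of the limit into a single citation of Kuperberg's theorem, whereas you unpack that step explicitly via the Jordan structure of the power-bounded operator $\Phi$ together with a recurrence argument on $\mathbb T^m$. These are essentially the ingredients of Kuperberg's argument, so your route is a self-contained variant rather than a genuinely different approach.

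On your flagged obstacle, the cleanest resolution is already implicit in your setup: the limit $P$ is itself a unital quantum channel (closure of these properties under norm limits) and idempotent, so its range equals its fixed-point set $\mathrm{Fix}(P)$, which for any unital channel is the commutant $\{K_i\}'$ of its Kraus operators and hence a genuine unital $*$-subalgebra. Tomiyama then identifies $P$ with the trace-preserving conditional expectation onto that algebra, exactly as you say. Note that your parenthetical appeal to the Choi--Effros product alone would not close this gap, since Choi--Effros only equips the range with an abstract $C^*$-structure, not necessarily the ambient multiplication; it is the trace-preservation (equivalently, the unital-channel fixed-point theorem) that forces the range to be a true subalgebra.
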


\begin{proof}
For the unital channel $\Phi$, look at the semigroup of linear maps $\mathfrak{C}_\Phi=\{\Phi^n: n\in \mathbb{N}\}$. As the closed unit ball of linear maps from $M_d$ to $M_d$ is Bolzano-Weierstrass compact, the above semigroup admits at least one limit point. By Kuperberg's Theorem (see \cite{kuperberg}) there is a subsequence $n_1, n_2, \cdots, $ such that
\[\lim_{j\to\infty} \Phi^{n_j} =\mathcal{E}_\Phi,\]
where $\mathcal{E}_\Phi$ is the conditional expectation channel onto the algebra generated by the eigen-operators of $\Phi$ corresponding to the eigenvalues $\lambda$ with $|\lambda|=1$ (this is the peripheral algebra $\mathcal{M}_{\Phi^\infty}$ studied in \cite{miza-mult}).  Now the result follows from Proposition \ref{cond-exp-mu}.
\end{proof}

For a special class of channels (e.g., see \cite{ahiable2021entanglement,rahaman2019new,sanz2010quantum}), one can make a stronger statement.

\begin{definition}
A unital channel $\Phi:M_d\rightarrow M_d$ is {\it primitive}  if it is {\it irreducible} (i.e., $\Phi(P)\leq \lambda P$ for some projection $P$ implies $P=0$ or $P=I$) and it has a trivial {\it peripheral spectrum} (i.e.,  $\text{spec}(\Phi)\cap \mathbb{T}= \{1 \}$).
%there is a positive integer $m$ such that $\Phi^m(\rho)$ is positive definite for all density matrices $\rho$ (i.e., for all positive semidefinite trace-one matrices).
\end{definition}

\begin{theorem}\label{primitivetheorem}
For every primitive unital channel $\Phi:M_d\rightarrow M_d$, there is a finite $k\in \mathbb{N}$ such that $\Phi^k$ is mixed unitary, and subsequenctly for every $l\geq k$, $\Phi^l$ is mixed unitary.
\end{theorem}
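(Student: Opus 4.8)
The plan is to reduce the statement to a single application of Watrous' Theorem~\ref{watrous} together with an absorption property of the depolarizing channel. The key observation is that for $p = \tfrac{1}{d^2-1}$ and \emph{any} unital channel $\Psi$, Theorem~\ref{watrous} tells us that $p\Psi + (1-p)\delta_d$ is mixed unitary; so it suffices to exhibit, for every sufficiently large power, a unital channel $\Psi_l$ with $\Phi^l = p\Psi_l + (1-p)\delta_d$.

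First I would identify the limiting behaviour of the powers $\Phi^n$. By Lemma~\ref{primitivelemma} there is a subsequence with $\Phi^{n_j} \to \mathcal E_\Phi$, the conditional expectation onto the peripheral algebra of $\Phi$. For a primitive channel this algebra is trivial: a trivial peripheral spectrum forces $1$ to be the only unit-modulus eigenvalue, while irreducibility forces the fixed-point algebra $\mathrm{Fix}(\Phi)$ to have no nontrivial projections and hence to equal $\C I_d$. Thus the peripheral algebra is $\C I_d$ and $\mathcal E_\Phi = \delta_d$. Since the Choi map $J(\cdot)$ is linear, continuity gives $J(\Phi^{n_j}) \to J(\delta_d) = \tfrac1d I_d\otimes I_d$.

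Next I would extract one good power. With $p = \tfrac{1}{d^2-1}$, the matrices $J(\Phi^{n_j}) - (1-p)J(\delta_d)$ converge to $p\,J(\delta_d) = \tfrac{p}{d} I_d\otimes I_d$, which is positive definite; hence for $j$ large enough $J(\Phi^{n_j}) \geq (1-p)J(\delta_d)$. Fix such a power and call it $k$. Define $\Psi = \tfrac1p\bigl(\Phi^k - (1-p)\delta_d\bigr)$. Its Choi matrix is $\tfrac1p\bigl(J(\Phi^k) - (1-p)J(\delta_d)\bigr) \geq 0$, so $\Psi$ is completely positive, and a direct check on $I_d$ and on traces shows $\Psi$ is unital and trace preserving. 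Thus $\Phi^k = p\Psi + (1-p)\delta_d$ and $\Phi^k$ is mixed unitary by Theorem~\ref{watrous}.

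Finally, for the claim that $\Phi^l$ is mixed unitary for all $l \geq k$, I would use that $\Phi$ absorbs $\delta_d$: since $\Phi$ is unital, $\Phi^m\circ\delta_d = \delta_d$ for every $m \geq 0$. Writing $l = k+m$,
\[
\Phi^l = \Phi^m\circ\Phi^k = p\,(\Phi^m\circ\Psi) + (1-p)\,(\Phi^m\circ\delta_d) = p\,(\Phi^m\circ\Psi) + (1-p)\delta_d,
\]
and $\Phi^m\circ\Psi$ is again a unital channel, so $\Phi^l$ has the Watrous form and is mixed unitary. I expect the main obstacle to be the identification of the subsequential limit as exactly $\delta_d$ (rather than merely some conditional expectation), which is precisely where both hypotheses defining primitivity are genuinely used; the positivity margin for the Choi matrices and the absorption step are then routine.
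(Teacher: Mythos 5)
Your proof is correct and follows essentially the same route as the paper: identify the Kuperberg limit $\mathcal E_\Phi$ as $\delta_d$ using both primitivity hypotheses, invoke Watrous' theorem to get a neighbourhood of $\delta_d$ consisting of mixed unitary channels, and use $\Phi\circ\delta_d=\delta_d$ to propagate to all higher powers. Your Choi-matrix computation simply makes explicit the ``ball around $\delta_d$'' that the paper asserts, and your algebraic identity $\Phi^l=p\,(\Phi^m\circ\Psi)+(1-p)\delta_d$ is the exact counterpart of the paper's CB-norm contraction estimate for the second claim.
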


\begin{proof}
It follows from the proof of the previous result that for a primitive unital channel $\Phi$, the conditional expectation $\mathcal{E}_\Phi$ described above is the completely depolarizing channel $\delta_d(X) $. Now by Watrous's theorem (\ref{watrous}) there is a ball around $\delta_d(X) $ where every unital channel is mixed-unitary.
So from the subsequence  $n_1, n_2, \cdots, $  if we take sufficiently large $n_i$'s, the maps $\Phi^{n_i}$ must fall in the ball around $\delta_d(X) $ . Hence there is a $k\in \mathbb{N}$ such that $\Phi^k$ is in this ball and it is mixed unitary.

The second statement follows easily from the above argument and the CB norm estimate: \[||\Phi^{k+r}-\delta_d ||_{CB}=||\Phi^r (\Phi^k-\delta_d)||_{CB}\leq || (\Phi^k-\delta_d)||_{CB} , \]
which also uses the fact that $\Phi \circ \delta_d = \delta_d$ as $\Phi$ is unital.
\end{proof}

In what follows, we will show how Theorem~\ref{main} allows us to prove an analogous result for all unital channels.

%%%%%
\subsection{Irreducible Channels and Peripheral Eigenvalues}

Next we derive some properties of peripheral eigenvalues for irreducible unital channels.

Let us first observe that a unital channel $\Phi$ is irreducible if and only if its fixed point algebra $\mathrm{Fix}(\Phi)$ is just the scalar algebra, $\mathcal A = \C I$. Indeed, if $\Phi$ is irreducible, then only the trivial projections are fixed by $\Phi$, and hence its fixed point algebra (which is spanned by its projections as a von Neumann algebra) must be trivial. Conversely, if the fixed point algebra for $\Phi$ is trivial and $P$ is a projection with $\Phi(P) \leq \lambda P$, then $\Phi(P)$ is supported on the range of $P$, which for a unital channel implies (as proved in \cite{kribscommut}) that in fact $\Phi(P)=P$ if it is non-zero, and hence $P=0$ or $P=I$.

In the following result, we denote the set of unital channels that fix a given algebra $\mathcal A$ by $\mathcal F(\mathcal A)$. Evidently this set has the structure of a convex semigroup under composition of maps. It is also $\ast$-closed, in the sense that a map is in the set if and only if its dual map is as well (which can be seen as a consequence of the fixed point theorem for unital channels \cite{kribscommut}).

\begin{lemma}\label{stariso}
Let $\mathcal{A}$ be a unital subalgebra of $M_d$ that is unitarily equivalent to $\oplus_{k=1}^r I_{m_k} \otimes M_{n_k}$, and let $\mathcal{F}(\mathcal{A})$ be the semigroup of unital channels on $M_d$ that fix $\mathcal{A}$. Let $\widehat{\mathcal{A}} = \oplus_{k=1}^r \mathbb{C} I_{m_k}$, with associated semigroup $\mathcal{F}(\widehat{\mathcal{A}})$ of unital channels on $M_{(\sum_k m_k)}$ that fix $\widehat{\mathcal{A}}$.

Then, there is a convex $\ast$-semigroup isomorphism $\alpha : \mathcal{F}(\mathcal{A}) \rightarrow \mathcal{F}(\widehat{\mathcal{A}})$ with the property that $\alpha(\Phi) \in \mathcal{F}(\widehat{\mathcal{A}})$ is mixed unitary if and only if $\Phi \in \mathcal{F}(\mathcal{A})$ is mixed unitary.
\end{lemma}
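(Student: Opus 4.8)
The plan is to realise $\alpha$ as the map on channels induced by the canonical $\ast$-isomorphism between the two commutants. Writing $\mathcal{A}' = \oplus_{k=1}^r M_{m_k}\otimes I_{n_k}$ and $\widehat{\mathcal{A}}' = \oplus_{k=1}^r M_{m_k}$, let $\beta : \mathcal{A}' \to \widehat{\mathcal{A}}'$ be the unital $\ast$-isomorphism $\oplus_k A_k\otimes I_{n_k}\mapsto \oplus_k A_k$, with inverse $\gamma$. Since $\Phi \in \mathcal{F}(\mathcal{A})$ fixes $\mathcal{A}$, its Kraus operators satisfy $\mathcal{A}\subseteq \mathrm{Fix}(\Phi)=\{K_i\}'$, so each $K_i\in\mathcal{A}'$. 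I would then define $\alpha(\Phi)$ to be the channel on $M_{\widehat{d}}$, where $\widehat{d}=\sum_k m_k$, given by $\alpha(\Phi)(Y) = \sum_i \beta(K_i)\,Y\,\beta(K_i)^*$. First I would check this is independent of the chosen Kraus family: any two families differ by a unitary mixing $K_i' = \sum_l u_{il}K_l$ (after padding with zeros), and since $\beta$ is linear the images differ by the same mixing, hence define the same channel. Because $\beta$ is a unital $\ast$-homomorphism, $\sum_i\beta(K_i)^*\beta(K_i) = \beta\bigl(\sum_i K_i^*K_i\bigr) = \beta(I)=I$, and likewise $\sum_i\beta(K_i)\beta(K_i)^*=I$, so $\alpha(\Phi)$ is again a unital channel; its Kraus operators lie in $\widehat{\mathcal{A}}'$, so it fixes $\widehat{\mathcal{A}}$, and thus $\alpha$ maps $\mathcal{F}(\mathcal{A})$ into $\mathcal{F}(\widehat{\mathcal{A}})$.

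Next I would verify the algebraic structure. Convexity is immediate from the Kraus description. For the semigroup and $\ast$ properties I use that $\beta$ is multiplicative and $\ast$-preserving: if $\Phi,\Psi$ have Kraus families $\{K_i\},\{L_j\}\subseteq\mathcal{A}'$ then $\Phi\circ\Psi$ has Kraus family $\{K_iL_j\}$, and $\beta(K_iL_j)=\beta(K_i)\beta(L_j)$ shows $\alpha(\Phi\circ\Psi)=\alpha(\Phi)\circ\alpha(\Psi)$; similarly $\beta(K_i^*)=\beta(K_i)^*$ gives $\alpha(\Phi^\ast)=\alpha(\Phi)^\ast$ for the dual maps. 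Running the same construction with $\gamma$ in place of $\beta$ produces a two-sided inverse (on Kraus operators $\gamma\circ\beta=\mathrm{id}$), so $\alpha$ is a convex $\ast$-semigroup isomorphism onto $\mathcal{F}(\widehat{\mathcal{A}})$.

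The substantive part, and the step I expect to be the main obstacle, is the mixed-unitary equivalence. I would isolate it in the following lemma: for any unital algebra $\mathcal{B}\subseteq M_N$, a channel in $\mathcal{F}(\mathcal{B})$ is mixed unitary if and only if it is a convex combination of maps $\Phi_U : X\mapsto UXU^*$ with $U\in\mathcal{U}(\mathcal{B}')$. One direction is trivial, since $U\in\mathcal{B}'$ makes $\Phi_U$ fix $\mathcal{B}$. For the converse — which at first glance looks as though it might require a twirling or moment-problem argument to force the representing unitaries into the commutant — the cited fixed-point theorem $\mathrm{Fix}(\Phi)=\{K_i\}'$ makes it immediate: if $\Phi=\sum_j\lambda_j\Phi_{U_j}$ with $\lambda_j>0$, then $\{\sqrt{\lambda_j}\,U_j\}$ is a Kraus family, so $\mathrm{Fix}(\Phi)=\{U_j\}'$, and $\mathcal{B}\subseteq\mathrm{Fix}(\Phi)$ forces every $U_j\in\mathcal{B}'$. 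Applying this with $\mathcal{B}=\mathcal{A}$ and with $\mathcal{B}=\widehat{\mathcal{A}}$, and using that the unital $\ast$-isomorphism $\beta$ carries $\mathcal{U}(\mathcal{A}')$ bijectively onto $\mathcal{U}(\widehat{\mathcal{A}}')$ with $\alpha(\Phi_U)=\Phi_{\beta(U)}$, I conclude that $\Phi$ is mixed unitary iff it lies in the convex hull of $\{\Phi_U:U\in\mathcal{U}(\mathcal{A}')\}$ iff $\alpha(\Phi)$ lies in the convex hull of $\{\Phi_V:V\in\mathcal{U}(\widehat{\mathcal{A}}')\}$ iff $\alpha(\Phi)$ is mixed unitary, which completes the proof.
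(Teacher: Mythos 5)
Your proposal is correct and follows essentially the same route as the paper: both define $\alpha$ by applying the canonical identification of $\mathcal{A}'=\oplus_k M_{m_k}\otimes I_{n_k}$ with $\oplus_k M_{m_k}$ to a Kraus family (the paper writes $K_i\mapsto \oplus_k K_{ik}$ where $U^*K_iU=\oplus_k K_{ik}\otimes I_{n_k}$, which is exactly your $\beta$), verify the convex $\ast$-semigroup properties on Kraus operators, and use the fixed-point theorem $\mathrm{Fix}(\Phi)=\{K_i\}'$ to force any representing unitaries into the relevant commutant so that mixed unitarity transfers in both directions. The only presentational difference is that you get bijectivity from the explicit inverse built from $\beta^{-1}$, while the paper argues injectivity via minimal Kraus representations and the unitary mixing freedom; both are sound.
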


\begin{proof}
Let $\Phi \in \mathcal{F}(\mathcal{A})$ with Kraus operators $\{K_i\}_{i=1}^n$. Since $\Phi$ fixes $\mathcal{A}$, we have $\mathcal A \subseteq \mathrm{Fix}(\Phi) = \{ K_i \}^\prime$, and so $K_i \in \mathcal{A}'$. Hence there exists a unitary $U\in M_d$ and matrices $K_{ik}\in M_{m_k}$ such that $U^*K_i U = \oplus_{k=1}^r K_{ik}\otimes I_{n_k}$ for all $i$.

Then $\alpha(\Phi)$ is defined to be the map whose Kraus operators are $\widehat{K_i}:=\oplus_{k=1}^r K_{ik}$, which, as a notational convenience, we sometimes write as $\widehat{K_i} = \alpha(K_i)$. To show that $\alpha(\Phi)$ fixes $\widehat{\mathcal{A}}$, we note that the Kraus operators of $\alpha(\Phi)$ always lies in the algebra $\oplus_{k=1}^r M_{m_k}$, and that $\widehat{\mathcal{A}}$ thus necessarily commutes with $\widehat{K_i}$; hence it is contained in the fixed point algebra.

It is easy to see that the image of $\alpha$ does not depend on a particular operator-sum representation of $\Phi$, and, moreover, that an operator-sum representation of $\Phi$ is minimal in terms of number of Kraus operators if and only if the image of the Kraus operators under $\alpha$ is a minimal representation of $\alpha(\Phi)$.

It is also clear that $\alpha$ is a $*$-homomorphism, since for any $\Phi$, $\Psi \in \mathcal{A}$ with respective Kraus operators $U^*K_i U = \oplus_{k=1}^r K_{ik}\otimes I_{n_k}$ and $U^*L_i U = \oplus_{k=1}^r L_{ik}\otimes I_{n_k}$, we have that $\Phi\circ \Psi$ has Kraus operators $U\bigl(\oplus_{k=1}^r K_{ik}L_{i'k}\otimes I_{n_k}\bigr)U^*$. The image under $\alpha$ of these operators are $\oplus_{k=1}^r K_{ik}L_{i'k}$, which are exactly the Kraus operators of $\alpha(\Phi)\circ \alpha(\Psi)$.

To see that $\alpha$ is an isomorphism, let $\Phi,\Psi \in \mathcal F(\mathcal A)$, and suppose $\Phi$ has a minimal set of Kraus operators given by $\{K_i = U\bigl( \oplus_{k=1}^r K_{ik}\otimes I_{n_k}\bigr)U^*\}_{i=1}^n$ and $\Psi$ has a minimal set of Kraus operators $\{L_i = U\bigl(\oplus_{k=1}^r L_{ik}\otimes I_{n_k}\bigr)U^*\}_{i=1}^{n'}.$ Then $\alpha(\Phi)$ has Kraus operators $\{\widehat{K_i}= \oplus_{k=1}^r K_{ik}\}_{i=1}^n$ and $\alpha(\Psi)$ has $\{\widehat{L_i} = \oplus_{k=1}^r L_{ik}\}_{i=1}^{n'}$.
If $\alpha(\Phi) = \alpha(\Psi)$, then we have $\{\widehat{K_i}\}_{i=1}^n$ and $\{\widehat{L_i}\}_{i=1}^{n'}$ are two different minimal Kraus representations of the same channel, so $n=n'$. Hence there exists a scalar unitary matrix $V = (v_{ij})$ such that $\widehat{L_i} = \sum_j v_{ij}\widehat{K_j}$ and so
$L_{ik} = \sum_j v_{ij}K_{jk}.$
Thus
%\begin{align*}
\begin{eqnarray*}
L_i  = U\bigl(\oplus_{k=1}^r L_{ik}\otimes I_{n_k}\bigr)U^*
&=& U\bigl(\oplus_{k=1}^r \bigl(\sum_{j=1}^n v_{ij}K_{jk}\bigr)\otimes I_{n_k}\bigr)U^*  \\
&=& U\bigl(\sum_j v_{ij} \bigl(\oplus_{k=1}^p K_{jk}\otimes I_{n_k}\bigr)\bigr)U^*
= \sum_j v_{ij}K_j ,
\end{eqnarray*}
%\end{align*}
and so $\{L_i\}$ and $\{K_i\}$ are two different representations of the same channel, giving $\Phi = \Psi$.

Finally, $\alpha(\Phi)$ is a unitary adjunction channel if and only if $\Phi$ is; since $\oplus_{k=1}^r U_k \otimes I_{j_k}$ is unitary if and only if each $U_k$ is unitary, which in turn is equivalent to $\oplus_{k=1}^r U_k$ being unitary.
So, in one direction, if $\Phi = \sum_i p_i \mathrm{ad}_{U_i}$ expresses $\Phi$ as a convex combination of unitary adjunction maps (where $\mathrm{ad}_U(X) = UXU^*$), the (convex) linearity of $\alpha$ guarantees that
\[
\alpha(\Phi) = \sum_i p_i \alpha(\mathrm{ad}_{U_i})
\]
expresses $\alpha(\Phi_i)$ as a convex combination of the unitary adjunctions $\alpha(\mathrm{ad}_{U_i})$.
In the other direction, suppose
\[
\alpha(\Phi) = \sum_i p_i \mathrm{ad}_{\widehat{U_i}}
\]
for some unitaries $\widehat{U_i}$. Since $\alpha(\Phi)$ fixes the algebra $\widehat{\mathcal{A}}=\oplus_{k=1}^r I_{m_k}$, it must be that each $\widehat{U_i} \in \widehat{\mathcal{A}}' = \oplus_{k=1}^r M_{m_k}$ and hence $\widehat{U_i} = \oplus_{k=1}^r U_{ik}$ for some unitaries $U_{ik}$ on each block.
If we define $U_i = U\bigl(\oplus_{k=1}^r U_{ik}\otimes I_{n_k}\bigr)U^* \in \mathcal{A}$, it is clear that $\alpha(\Phi)$ is now the image of $\Psi:=\sum_i p_i \mathrm{ad}_{U_i}$, which is mixed unitary. Since $\alpha$ is an isomorphism, and $\alpha(\Phi) = \alpha(\Psi)$, it must in fact be that $\Phi = \Psi$ and hence is mixed unitary.
\end{proof}

\begin{remark}
{\rm 
Notice that if $\mathcal{A}$ is the fixed point algebra of $\Phi$, i.e., the largest unital algebra fixed by $\Phi$, then the algebra generated by its Kraus operators $K_i$ is $\mathcal{A}'$. So $\alpha(\Phi)$ has Kraus operators that generate the algebra $\alpha(\mathcal{A}) = \oplus_{k=1}^r M_{m_k}$, and so the fixed point algebra of $\alpha(\Phi)$ is the abelian algebra $\oplus_{k=1}^r \C I_{m_k}$. Also notice that the channels $\Phi_k$, with Kraus operators $\{K_{ik}\}$ are irreducible.
Thus, without loss of generality, we will prove our result for channels with abelian fixed point algebra, as any unital channel is identified with a channel that has abelian fixed point algebra, and where the identification carries through the relevant properties (i.e., commutes with powers and preserves mixed unitarity).
}
\end{remark}

We next consider the {\it peripheral spectrum} for a map $\Phi : M_d \rightarrow M_d$, which is the set
\[
\{ X\in M_d \,\, | \,\, \Phi(X) = \lambda X \,\, \mathrm{for\, some} \, |\lambda|=1\}.
\]
In the case of an irreducible unital channel, there is a positive integer $m$ such that the peripheral spectrum is $\{\omega^i\}_{i=0}^{m-1}$ for some primitive $m^{th}$ root of unity (see for instance Theorem~6.6 from \cite{wolf2012quantum}). Further, as shown in \cite{miza-mult} (Theorem 2.5), for a unital channel $\Phi$, the algebra generated by all peripheral eigen-operators for $\Phi$ is equal to the algebra $\mathcal M_{\Phi^\infty}$, which is defined as the decreasing intersection of the multiplicative domains $\mathcal M_{\Phi^k}$ for $\Phi^k$, $k\geq 1$; in particular, the peripheral spectrum of $\Phi^k$ is a subset of the peripheral spectrum of $\Phi$.

The following useful fact for us comes as a simple consequence of the spectral mapping theorem, from which it follows that the spectrum of $\Phi^m$ consists of the elements of the spectrum of $\Phi$ raised to the $m$th power.

\begin{lemma}\label{peripherallemma}
Suppose $\Phi$ is an irreducible unital channel with  peripheral spectrum $\{\omega^i\}_{i=0}^{m-1}$ for some primitive $m^{th}$ root of unity. Then $\Phi^m$ has no non-trivial peripheral spectrum; that is, $\mathrm{spec}(\Phi^m)\cap \mathbb T = \{ 1 \}$.
\end{lemma}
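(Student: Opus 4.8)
The plan is to reduce the statement to a direct application of the spectral mapping theorem, exactly as the sentence preceding the lemma suggests. Since $\Phi$ is a linear map on the finite-dimensional space $M_d$, its spectrum coincides with its set of eigenvalues, and for the polynomial $p(z) = z^m$ the spectral mapping theorem gives
\[
\mathrm{spec}(\Phi^m) = \{\, \mu^m : \mu \in \mathrm{spec}(\Phi) \,\}.
\]
I would state this identity first and treat it as the engine of the proof.

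The key computation is then to track what happens to the peripheral part under the map $\mu \mapsto \mu^m$. First I would fix notation: by hypothesis $\omega$ is a primitive $m^{\mathrm{th}}$ root of unity, so $\omega^m = 1$, and the peripheral spectrum of $\Phi$ is exactly $\{\omega^i\}_{i=0}^{m-1}$, i.e. the eigenvalues of $\Phi$ of modulus $1$ are precisely the $\omega^i$. Next I would take an arbitrary $\lambda \in \mathrm{spec}(\Phi^m) \cap \mathbb{T}$ and write $\lambda = \mu^m$ for some $\mu \in \mathrm{spec}(\Phi)$ using the displayed identity. Since $\lvert \lambda \rvert = \lvert \mu \rvert^m = 1$ forces $\lvert \mu \rvert = 1$, the eigenvalue $\mu$ must lie in the peripheral spectrum of $\Phi$, hence $\mu = \omega^i$ for some $i$. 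Then $\lambda = (\omega^i)^m = (\omega^m)^i = 1$. This shows $\mathrm{spec}(\Phi^m) \cap \mathbb{T} \subseteq \{1\}$.

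To close the argument I would note the reverse containment is immediate: because $\Phi$ is unital, $\Phi^m(I) = I$, so $1$ is an eigenvalue of $\Phi^m$ lying on $\mathbb{T}$, giving $\mathrm{spec}(\Phi^m) \cap \mathbb{T} = \{1\}$ as claimed. I do not anticipate a genuine obstacle here — the content is entirely in correctly invoking the spectral mapping theorem and in the one-line observation that raising a primitive $m^{\mathrm{th}}$ root of unity to the $m^{\mathrm{th}}$ power yields $1$. The only point requiring a little care is ensuring that a modulus-one eigenvalue of $\Phi^m$ can only come from a modulus-one eigenvalue of $\Phi$ (so that the hypothesis on the peripheral spectrum of $\Phi$ actually applies), which is exactly the $\lvert \mu \rvert^m = 1 \Rightarrow \lvert \mu \rvert = 1$ step above.
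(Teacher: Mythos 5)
Your proposal is correct and follows exactly the route the paper intends: the paper gives no explicit proof of this lemma, merely noting beforehand that it is ``a simple consequence of the spectral mapping theorem,'' and your argument is precisely the fleshed-out version of that remark, including the one point of care (that $\lvert\mu\rvert^m=1$ forces $\lvert\mu\rvert=1$, so the hypothesis on the peripheral spectrum of $\Phi$ applies).
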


%\begin{proof}
%Suppose that $\Phi^m(X) = \lambda X$ with $|\lambda|=1$. Then $\lambda = \omega^j$ for some $j$ by the above remark. We next note that all eigenvalues of modulus one for $\Phi$ and $\Phi^m$ have multiplicity-one blocks in their Jordan form (for instance see Proposition~6.2 from \cite{wolf2012quantum}). Now, $\lambda^{1/m} = (\omega^{j/m}) = \omega^k$, for some $k$, must appear in the corresponding diagonal entry (associated with the column coordinates for $\Phi(X)$) of the Jordan form for $\Phi$. This is a one-by-one Jordan block, and hence we have that $\Phi(X) = \omega^k X$, from which it follows that $\lambda = (\omega^k)^m = 1$, as required.
%\end{proof}

We next recall basic features of peripheral eigenvalues, with a short proof for completeness.

\begin{lemma}\label{peripheigs} Let $\Phi : M_d \rightarrow M_d$ be a unital channel, and let $X$ be a peripheral eigenvector for $\Phi$: $\Phi(X) = \lambda X$ for some $|\lambda|=1$. Then $K_i X = \lambda XK_i$ for all Kraus operators $K_i$, and so if $X$ is a peripheral eigenvector for $\Phi$ with eigenvalue $\lambda$, then we have
\[
\Phi(XA) = \lambda X \Phi(A) \ \, \Phi(AX) = \overline{\lambda}\Phi(A)X ,
\]
for all $A\in M_d$.
\end{lemma}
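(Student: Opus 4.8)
The plan is to exploit that a unital quantum channel is a unital $2$-positive map, hence obeys the Kadison--Schwarz inequality, and then to force equality in that inequality using trace preservation. First I would apply the Schwarz inequality $\Phi(Y^*Y) \ge \Phi(Y)^*\Phi(Y)$, which holds because $\Phi(I)=I$, with $Y=X$. Since completely positive maps are Hermitian preserving, $\Phi(X^*) = \Phi(X)^* = \overline{\lambda}X^*$, and therefore $\Phi(X)^*\Phi(X) = |\lambda|^2 X^*X = X^*X$. This gives $\Phi(X^*X) \ge X^*X$, and symmetrically, applying the companion Schwarz inequality to $XX^*$, also $\Phi(XX^*) \ge XX^*$.

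The key step is to upgrade these inequalities to equalities. As $\Phi$ is trace preserving, $\tr\bigl(\Phi(X^*X) - X^*X\bigr) = \tr(X^*X) - \tr(X^*X) = 0$; but $\Phi(X^*X)-X^*X$ is positive semidefinite, and a positive semidefinite matrix of zero trace vanishes. Hence $\Phi(X^*X) = X^*X = \Phi(X)^*\Phi(X)$, and likewise $\Phi(XX^*) = \Phi(X)\Phi(X)^*$. In other words $X$ lies in the multiplicative domain of $\Phi$.

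To extract the operator relation $K_iX = \lambda XK_i$, I would pass to the Stinespring form $\Phi(Y) = V^*(Y\otimes I_n)V$, where $V\xi = \sum_i (K_i^*\xi)\otimes e_i$ is an isometry because $V^*V = \sum_i K_iK_i^* = \Phi(I) = I$. Writing $P = VV^*$, the Schwarz defect for $Y=X$ is $V^*(X^*\otimes I_n)(I-P)(X\otimes I_n)V = M^*M$ with $M = (I-P)(X\otimes I_n)V$, so the equality just established forces $M=0$, i.e. $(X\otimes I_n)V = P(X\otimes I_n)V = V\,V^*(X\otimes I_n)V = V\Phi(X) = \lambda VX$. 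Reading off the $e_i$-components yields $XK_i^* = \lambda K_i^*X$ for every $i$; applying the identical argument to the peripheral eigenvector $X^*$ (which has eigenvalue $\overline{\lambda}$) and taking adjoints gives $K_iX = \lambda XK_i$.

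The two multiplicativity identities then follow by direct substitution into the operator-sum form: $\Phi(XA) = \sum_i (K_iX)AK_i^* = \lambda X\sum_i K_iAK_i^* = \lambda X\Phi(A)$, and $\Phi(AX) = \sum_i K_iA(XK_i^*) = \lambda\bigl(\sum_i K_iAK_i^*\bigr)X = \lambda\Phi(A)X$, using $K_iX=\lambda XK_i$ and $XK_i^*=\lambda K_i^*X$ respectively; equivalently, these are just the multiplicative-domain relations $\Phi(XA)=\Phi(X)\Phi(A)$ and $\Phi(AX)=\Phi(A)\Phi(X)$ with $\Phi(X)=\lambda X$ inserted. I expect the main obstacle to be the passage from the scalar equality in the Schwarz inequality to the componentwise commutation relation; the trace-preservation argument is what collapses the inequality to an equality, and the Stinespring isometry is what converts that equality into the operator identity $K_iX=\lambda XK_i$, from which everything else is routine substitution.
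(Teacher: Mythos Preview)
Your proof is correct, and it rests on the same underlying mechanism as the paper's---trace preservation collapses the Schwarz defect $\Phi(XX^*)-\Phi(X)\Phi(X)^*$ to zero---but the packaging is different. The paper bypasses the Kadison--Schwarz and Stinespring machinery entirely: it sets $A_i = K_iX-\lambda XK_i$ and computes directly that $\sum_i A_iA_i^* = \Phi(XX^*)-XX^*$, so the single trace argument simultaneously forces the Schwarz equality \emph{and} yields $A_i=0$ for each $i$, giving $K_iX=\lambda XK_i$ in one stroke. Your route separates these two steps (first equality in Schwarz, then the Stinespring isometry to extract the componentwise relation), which is slightly longer but has the virtue of making the multiplicative-domain interpretation explicit; the paper's argument is the Kraus-level shortcut of the same idea. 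One small note: your final computation gives $\Phi(AX)=\lambda\Phi(A)X$, which is indeed what the multiplicative-domain relation $\Phi(AX)=\Phi(A)\Phi(X)$ predicts; the $\overline{\lambda}$ in the displayed statement appears to be a typo in the paper.
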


\begin{proof}
Define $A_i = K_i X - \lambda XK_i$. Then we have,
\begin{align*}\sum_i A_i A_i^* & = \sum_i K_i X X^* K_i^* -\overline{\lambda}\sum_i K_i XK_i^*X^* - \lambda X \sum_i K_i X^*K_i^* + |\lambda|^2 X \sum_i K_iK_i^* X^* \\
%& = \Phi(XX^*) - |\lambda|^2 X \Phi(I) X^*  \\
& = \Phi(XX^*) - XX^*,
\end{align*}
where we use the fact that $\Phi(X) = \lambda X$, $\Phi(X^*) = \overline{\lambda} X^*$, and that $\Phi$ is unital.
Then, by trace preservation, we have that
\[
\sum_i \tr(A_iA_i^*)  = \tr(\Phi(XX^*)) - \tr(XX^*)  = 0 ,
\]
and hence each $A_i = 0$. The final statement immediately follows.
\end{proof}

We also need the following characterization of peripheral eigenvectors in the commutative fixed point algebra case.

\begin{lemma}\label{peripheraldecomp}
Let $\Phi$ be a unital channel with fixed point algebra unitarily equivalent to $\oplus_{k=1}^r \C I_{m_k}$. Let $X$ be a peripheral eigenvector. Then one of the two following cases holds:
\begin{enumerate}
\item  $X = \oplus_{k=1}^r X_k$ where each $X_k$ is a peripheral eigenvector for the irreducible channel $\Phi_k$ obtained by restricting the Kraus operators of $\Phi$ to the $k^{th}$ diagonal block.
\item There exists $j,k$ such that $m_j = m_k$, and there is a unitary $U$ on $\C^{m_j}$ such that $\Phi_j = \mathrm{ad}_U \circ \Phi_k \circ \mathrm{ad}_{U^*}$.
\end{enumerate}
\end{lemma}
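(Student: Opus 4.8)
The plan is to pass to the block structure induced by the fixed point algebra and then to exploit the intertwining relation from Lemma~\ref{peripheigs}. Since $\mathrm{Fix}(\Phi) = \oplus_{k=1}^r \mathbb{C} I_{m_k}$, the Kraus operators $K_i$ lie in the commutant $\oplus_{k=1}^r M_{m_k}$, so $K_i = \oplus_{k=1}^r K_{ik}$ with $K_{ik} \in M_{m_k}$; reading off unitality and trace preservation of $\Phi$ block-by-block shows each $\Phi_k(\cdot) = \sum_i K_{ik}(\cdot)K_{ik}^*$ is a unital channel, and by the remark above each $\Phi_k$ is irreducible. Writing $X = (X_{jk})$ in this block form, Lemma~\ref{peripheigs} gives $K_i X = \lambda X K_i$, and comparing the $(j,k)$ blocks yields the central relation
\[
K_{ij} X_{jk} = \lambda X_{jk} K_{ik} \qquad \text{for all } i,
\]
while the $(k,k)$ blocks of $\Phi(X)=\lambda X$ separately give $\Phi_k(X_{kk}) = \lambda X_{kk}$.

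The argument then splits on whether the off-diagonal blocks vanish. If $X_{jk} = 0$ for all $j \neq k$, then $X = \oplus_k X_{kk}$ and the diagonal equations put us in case~(1). Otherwise I fix $j \neq k$ with $X_{jk}\neq 0$, and I claim this forces case~(2). The key computation is that the intertwining relation promotes $X_{jk}X_{jk}^*$ and $X_{jk}^*X_{jk}$ to fixed points: substituting $K_{ij}X_{jk} = \lambda X_{jk}K_{ik}$ and its adjoint into $\sum_i K_{ij}(X_{jk}X_{jk}^*)K_{ij}^*$ and using $\sum_i K_{ik}K_{ik}^* = I_{m_k}$ collapses it to $X_{jk}X_{jk}^*$, so $\Phi_j(X_{jk}X_{jk}^*) = X_{jk}X_{jk}^*$; the symmetric manipulation with $\sum_i K_{ij}^*K_{ij} = I_{m_j}$ shows $X_{jk}^*X_{jk}$ is fixed by the dual channel $\Phi_k^{\dagger}(\cdot)=\sum_i K_{ik}^*(\cdot)K_{ik}$.

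Now irreducibility finishes it. Because $\Phi_j$ is irreducible its only positive fixed points are scalars, giving $X_{jk}X_{jk}^* = c I_{m_j}$, and $\Phi_k^{\dagger}$ is irreducible as well (its fixed point algebra is the adjoint of $\mathrm{Fix}(\Phi_k) = \mathbb{C} I$, hence scalar), so $X_{jk}^*X_{jk} = c' I_{m_k}$. Since $X_{jk}\neq 0$ both scalars are positive, so $X_{jk}$ has full row rank (from $X_{jk}X_{jk}^* = cI_{m_j}$) and full column rank (from $X_{jk}^*X_{jk} = c'I_{m_k}$), forcing $m_j = m_k$ and $X_{jk} = \sqrt{c}\,U$ for a unitary $U$ on $\mathbb{C}^{m_j}$. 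Feeding $X_{jk} = \sqrt{c}\,U$ back into the intertwining relation gives $K_{ij} = \lambda U K_{ik} U^*$, and substituting this into $\Phi_j(Y) = \sum_i K_{ij} Y K_{ij}^*$ (the phases $\lambda, \overline{\lambda}$ cancelling) produces $\Phi_j = \mathrm{ad}_U \circ \Phi_k \circ \mathrm{ad}_{U^*}$, which is case~(2). I expect the main obstacle to be the middle step --- recognizing that the eigenvector relation upgrades $X_{jk}X_{jk}^*$ to a genuine fixed point of $\Phi_j$ --- since once this is seen the irreducibility hypothesis does all of the remaining work and the rest is routine block bookkeeping.
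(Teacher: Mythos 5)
Your proof is correct and follows essentially the same route as the paper: decompose $X$ into blocks via the fixed-point projections, use the intertwining relation $K_{ij}X_{jk}=\lambda X_{jk}K_{ik}$ from Lemma~\ref{peripheigs}, show each off-diagonal block is zero or a multiple of a unitary, and conjugate to get $\Phi_j=\mathrm{ad}_U\circ\Phi_k\circ\mathrm{ad}_{U^*}$. The only (cosmetic) difference is that the paper deduces $X_{jk}X_{jk}^*=cI$ by observing that $P_jXP_k$ is itself a peripheral eigenvector of $\Phi$, so its square modulus lies in the global fixed point algebra $\oplus_k\C I_{m_k}$, whereas you verify directly that $X_{jk}X_{jk}^*$ is fixed by the irreducible block channel $\Phi_j$ (and $X_{jk}^*X_{jk}$ by the dual of $\Phi_k$); both hinge on the same hypothesis.
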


\begin{proof}
Up to unitary equivalence, the fixed point algebra has minimal central (orthogonal) projections
$P_i = \oplus_{k=1}^r \delta_{ik} I_{i_k}$ with $\sum_i P_i = I$.
As these are fixed points of $\Phi$, we have by Lemma~\ref{peripheigs} that
$\Phi(P_kXP_j) = P_k \Phi(X) P_j$ for all $X$ and $k,j$. In particular, applying this to the peripheral eigenvector $X$ with eigenvalue $\lambda$, we get
\[
\Phi(P_kX P_j) = \lambda P_kX P_j
\]
for all pairs $j,k$. That is, $P_kX P_j$ is also a peripheral eigenvector for $\Phi$ with eigenvalue $\lambda$.

Now, Lemma~\ref{peripheigs} also shows that, for any peripheral eigenvector $X$ we have
\[
\Phi(XX^*) =  |\lambda|^2 XX^* = XX^*,
\]
and so $XX^*$ must in the span of the $P_i$. Hence the same is true for $X^*X$, $P_k X P_j X^* P_k$ and $P_k X^* P_j X P_k$.

Thus we can find scalars $c_i$ such that
\[
P_kXP_jX^*P_k = \sum_i c_i P_i ,
\]
which yields after multiplying on the left or right by $P_k$ that
\[
P_k XP_jX^*P_k =  c_k P_k.
\]
If we let $X_{kj}$ be the operator corresponding to the $(k,j)$ block in the decomposition determined by the $\{P_j\}$, which is $P_k X$ restricted to the range of $P_j$, then we have that
\[
X_{kj}X^*_{kj} = c_k I_{m_k} \quad \textnormal{and} \quad X_{kj}^*X_{kj} = c_j I_{m_j}.
\]

There are two possibilites: either $c_k = c_j = 0$, or both scalars are non-zero and $X_{kj}$ is a (non-zero) multiple of a unitary (and $m_j = m_k$).
Thus, in this block matrix form, any peripheral eigenvector has the form,
\[
X = \sum_{i,j} E_{ij} \otimes X_{ij},
\]
where each $X_{ij}$ is either $0$ or a (non-zero) multiple of a unitary. Moreover, we know by Lemma~\ref{peripheigs} that $K_i X = \lambda X K_i$, and so we have that
\[
K_{ij}X_{jk} = \lambda X_{jk} K_{ik}
\]
for all $i$ and all $(j,k)$. In the case that $X_{jk}$ is non-zero, we therefore have,
\[
K_{ij} = \frac{1}{c_j}\lambda X_{jk}K_{ik}X_{jk}^*.
\]
Since $\frac{X_{jk}}{\sqrt{c_j}}$ is unitary, and $|\lambda| = 1$, this expresses $K_{ij}$ as a unitary conjugation of $K_{ik}$ for all $i$; that is, if $\Phi_i$ is the channel whose Kraus operators are $\{K_{ij}\}_{j=1}^n$, then $\Phi_j = \mathrm{ad}_U \circ \Phi_k \circ \mathrm{ad}_{U^*}$  with $U = \sqrt{\frac{\lambda}{c_j}}X_{jk}$.
\end{proof}

Combining these last results with Kuperberg's Theorem \cite{kuperberg} and our main result from the last section, allows us to prove the following.

\begin{theorem}\label{eventually-mu}
Let $\Phi$ be a unital channel. Then there exists an integer $k>0$ such that $\Phi^k$ is mixed unitary.
\end{theorem}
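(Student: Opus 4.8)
The plan is to show that a suitable power of $\Phi$ acts on its peripheral algebra by an \emph{inner} automorphism, so that after undoing that inner rotation by a unitary conjugation the remaining map lands inside the mixed-unitary neighbourhood of a conditional expectation supplied by Theorem~\ref{main}. Write $\mathcal{A} = \mathcal{M}_{\Phi^\infty}$ for the peripheral algebra of $\Phi$, which by \cite{miza-mult} is a finite-dimensional $\ast$-algebra equal to the span of the peripheral eigen-operators, and let $\mathcal{E}_{\mathcal{A}}$ be the trace-preserving conditional expectation onto it. Since the peripheral space is $\Phi$-invariant and $\Phi$ is invertible there, $\beta := \Phi|_{\mathcal{A}}$ is a $\ast$-automorphism of $\mathcal{A}$; writing $M_d = \mathcal{A}\oplus\mathcal{N}$ for the $\Phi$-invariant spectral splitting (with $\mathcal{E}_{\mathcal{A}}$ the orthogonal projection onto $\mathcal{A}$, exactly as in the proof of Lemma~\ref{primitivelemma}) and using that $\Phi$ has spectral radius strictly below $1$ on $\mathcal{N}$, I obtain the key estimate $\lVert \Phi^n - \beta^n\circ\mathcal{E}_{\mathcal{A}}\rVert \to 0$ as $n\to\infty$.

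First I would reduce the rotation $\beta$ to an inner one. The automorphism $\beta$ permutes the finitely many minimal central projections of $\mathcal{A}$, so this permutation has some finite order $N$; then $\beta^N$ fixes every minimal central projection and hence preserves each simple summand $I_{\mu_l}\otimes M_{\nu_l}$ of $\mathcal{A}$. An automorphism preserving each summand is inner, so $\beta^N = \mathrm{ad}_V$ for some unitary $V\in\mathcal{A}$. Here Lemma~\ref{stariso} lets me first pass to the case of an abelian fixed-point algebra, where Lemma~\ref{peripheraldecomp} makes the summand structure explicit: the summands of $\mathcal{A}$ are indexed by the unitary-equivalence classes of the irreducible diagonal blocks, their central projections are sums of the $\Phi$-fixed block projections, and Lemma~\ref{peripherallemma} accounts for the finite order coming from the periods of those blocks. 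Replacing $\Phi$ by $\Phi^N$ (which has the same peripheral algebra $\mathcal{A}$, since raising to a power does not enlarge the peripheral space), I may therefore assume $\beta = \mathrm{ad}_V$ with $V\in\mathcal{A}$.

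With this in hand I would run the ball argument. For each $n$ set $\Psi_n := \mathrm{ad}_{V^{-n}}\circ\Phi^{n}$, again a unital channel; for $A\in\mathcal{A}$ we have $\Phi^n(A) = \beta^n(A) = V^n A V^{-n}$, so $\Psi_n(A) = A$, whence $\mathcal{A}\subseteq\mathrm{Fix}(\Psi_n)$ and Theorem~\ref{main} applies to $\Psi_n$ with the algebra $\mathcal{A}$. Since $\mathrm{ad}_{V^{-n}}$ is a complete isometry for the relevant norm and $\mathrm{ad}_{V^{-n}}\circ\mathrm{ad}_{V^{n}}\circ\mathcal{E}_{\mathcal{A}} = \mathcal{E}_{\mathcal{A}}$, the estimate above gives $\lVert \Psi_n - \mathcal{E}_{\mathcal{A}}\rVert = \lVert \Phi^n - \mathrm{ad}_{V^n}\circ\mathcal{E}_{\mathcal{A}}\rVert \to 0$. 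Now Theorem~\ref{main}, with the $p\in(0,1)$ it supplies (depending only on $\mathcal{A}$), yields a fixed mixed-unitary neighbourhood of $\mathcal{E}_{\mathcal{A}}$ inside the unital channels fixing $\mathcal{A}$: for such a channel $\Psi$ near $\mathcal{E}_{\mathcal{A}}$, the map $\mathcal{E}_{\mathcal{A}} + \tfrac1p(\Psi-\mathcal{E}_{\mathcal{A}})$ is still a unital channel fixing $\mathcal{A}$ (its Choi operator stays positive because $\mathcal{E}_{\mathcal{A}}$, via its integral form in Lemma~\ref{cond-exp-mu}, has full support on $\operatorname{vec}(\mathcal{A}')$, which already contains the support of every competitor), so $\Psi = p\big(\mathcal{E}_{\mathcal{A}} + \tfrac1p(\Psi-\mathcal{E}_{\mathcal{A}})\big) + (1-p)\mathcal{E}_{\mathcal{A}}$ is mixed unitary. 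Hence $\Psi_n$ is mixed unitary for all large $n$, and therefore so is $\Phi^{n} = \mathrm{ad}_{V^{n}}\circ\Psi_n$, being a unitary conjugation of a mixed-unitary channel; unwinding the replacement $\Phi\leadsto\Phi^N$ then exhibits a power of the original $\Phi$ that is mixed unitary.

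The main obstacle is the inner-ness step: proving that some power of $\beta$ is implemented by a unitary \emph{lying in $\mathcal{A}$} (equivalently, that once the periods of the irreducible pieces are absorbed into the power, $\beta$ only rotates within summands rather than permuting central sectors). This is precisely what isolates the genuinely non-mixed-unitary behaviour of $\Phi$ from its harmless ``rotational'' part, and it is where Lemmas~\ref{peripheraldecomp} and \ref{peripherallemma}, together with the reduction of Lemma~\ref{stariso}, carry the load; the decay estimate and the translation of Theorem~\ref{main}'s ball by the isometry $\mathrm{ad}_{V^{-n}}$ are then routine.
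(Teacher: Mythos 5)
Your proof is correct, but it takes a genuinely different route from the paper's. The paper reduces (via Lemma~\ref{stariso}) to a commutative fixed point algebra, then uses the peripheral-eigenvector analysis of Lemmas~\ref{peripherallemma} and \ref{peripheraldecomp} to find a power $\Phi^M$ with \emph{no} nontrivial peripheral spectrum, so that the peripheral algebra of $\Phi^M$ coincides with its fixed point algebra $\mathcal{A}$; Kuperberg's theorem then gives a subsequence $\Phi^{Mk_i}\to\mathcal{E}_{\mathcal{A}}$ that stays inside $\mathcal{F}(\mathcal{A})$, where the ball of mixed unitaries around $\mathcal{E}_{\mathcal{A}}$ from Theorem~\ref{main} finishes the job (the remark after the theorem explains that staying inside $\mathcal{F}(\mathcal{A})$ is exactly the delicate point). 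You instead kill only the \emph{outer} part of the peripheral automorphism $\beta=\Phi|_{\mathcal{A}}$ --- the finite-order permutation of central summands --- and then absorb the residual inner rotation $\mathrm{ad}_V$, $V\in\mathcal{A}$, by conjugating with $V^{-n}$, so that $\Psi_n=\mathrm{ad}_{V^{-n}}\circ\Phi^n$ lies in $\mathcal{F}(\mathcal{A})$ by construction and converges to $\mathcal{E}_{\mathcal{A}}$ along the \emph{full} sequence; mixed unitarity then transfers back to $\Phi^n$ because unitary conjugation preserves it. This is a clean alternative resolution of the issue flagged in the paper's remark, it trades the bespoke analysis of Lemma~\ref{peripheraldecomp} for the standard Skolem--Noether fact that a summand-preserving automorphism of $\oplus_l M_{\nu_l}$ is inner, and the full-sequence convergence could in principle yield an effective bound on $k$ from the subperipheral spectral gap. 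The price is that you lean on two standard but unproved facts which you should state explicitly: that peripheral eigenvalues of a channel are semisimple (so the splitting $M_d=\mathcal{A}\oplus\mathcal{N}$ with decay on $\mathcal{N}$ exists), and that the spectral projection onto the peripheral part is the trace-preserving conditional expectation $\mathcal{E}_{\mathcal{A}}$ (which is essentially the content of the Kuperberg limit used in Lemma~\ref{primitivelemma}); your Choi-support justification of the mixed-unitary ball is actually more detailed than the paper's. The invocation of Lemmas~\ref{stariso} and \ref{peripheraldecomp} in your second paragraph is not really needed for your argument and could be dropped.
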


\begin{proof}
By Lemma~\ref{stariso}, perhaps by replacing $\Phi$ with $\alpha(\Phi)$, we can without loss of generality assume $\Phi$ has a commutative fixed point algebra. Then, Lemma~\ref{peripherallemma} and Lemma~\ref{peripheraldecomp} show that a high enough power, $M\geq 1$ say, of $\Phi$ has no non-trivial peripheral spectrum; for instance, $M$ can be taken as the lowest common multiple of the $m$'s from Lemma~\ref{peripherallemma} applied to a representative from each of the irreducible channel unitary equivalence classes found in Lemma~\ref{peripheraldecomp}.

Thus, $\Phi^M$ is a unital channel with no non-trivial peripheral spectrum, and so its peripheral algebra is just its fixed point algebra. We can now apply Kuperberg's Theorem in this case to find a subsequence $\{k_i\}$ such that $(\Phi^M)^{k_i} = \Phi^{M k_i} \rightarrow \mathcal E_{\mathcal{A}}$ where $\mathcal{A}$ is the fixed point algebra of $\Phi^M$. By Theorem~\ref{main}, there is a ball around $\mathcal E_{\mathcal{A}}$ consisting entirely of mixed unitaries, and hence, any channel in $\mathcal{F}(\mathcal{A})$ sufficiently close to $\mathcal E_{\mathcal{A}}$ is mixed unitary. Therefore, it follows that there is a $k_N$ such that, for all $i>N$, the channel $(\Phi^M)^{k_j}$ is sufficiently close to $\mathcal E_{\mathcal{A}}$ that it is mixed unitary, and this completes the proof.
\end{proof}

\begin{remark}
{\rm 
Notice that in order to obtain this result, we cannot use Kuperberg's Theorem directly with the conditional expectation onto the peripheral algebra; this is because the ball of mixed unitaries we obtain around $\mathcal E_{\mathcal{A}}$ is in the relative interior of $\mathcal{F}(\mathcal{A})$, the set of all unital channels with fixed point $\mathcal{A}$. So if $\Phi^M$ only has peripheral algebra $\mathcal{A}$, but not fixed point algebra $\mathcal{A}$, although $\Phi^{M k_i} \rightarrow \mathcal E_{\mathcal{A}}$, it may approach from outside the relative interior $\mathcal{F}(\mathcal{A})$ where the Theorem does not apply.
}
\end{remark}

\begin{remark}
{\rm 
We also draw the attention of the reader to a conjecture called the ``Asymptotic Quantum Birkholff Conjecture'', which asks whether for a unital quantum channel $\Phi: M_n \rightarrow M_n$, it holds that,
\[
\lim_{k \rightarrow \infty} d_{CB} (\Phi^{\otimes k}, MU(M_n^{\otimes k})) = 0,
\]
where, as above, $d_{CB}$ is the completely bounded distance of $\Phi^{\otimes k}$ to the set of mixed unitary maps on $M_n^{\otimes k}$. The conjecture was resolved in the negative by Haagerup and Musat (\cite{HaagerupM2011}). They introduced a new class of maps called \textit{factorizable maps} and showed that maps which are not factorizable, fail to satisfy the above conjecture. In essence, this means that not every unital channel, after taking tensor powers with itself, becomes mixed unitary even if we take larger and larger tensor powers.  In contrast, Lemma \ref{primitivelemma} shows that every unital channel `asymptotically becomes' mixed unitary. Quite significantly, Theorem \ref{eventually-mu} goes further and uncovers an interesting aspect of unital channels the contrasts with tensor powers: it says under composition, every unital channel becomes mixed unitary after finitely many applications.
}
\end{remark}

%%%%%%%%%%%

\section{The Case of the Diagonal Algebra: Correlation Matrices and Schur Product Channels}

We finish by considering the case of the diagonal algebra in Theorem~\ref{main} in more detail; that is, $\mathcal{A} = \Delta_d \cong \oplus_{k=1}^d \C 1$, the algebra of $d \times d$ diagonal complex matrices. We shall give two alternate proofs of the theorem in this case using different approaches, and in doing so, we make connections with the theory of correlation matrices and Schur product maps \cite{paulsen,holevo2019quantum} (which have also recently arisen in other quantum information settings \cite{harris2018schur,levick2017quantum,puchala2021dephasing}), and Abelian group theory.

We begin by noting that the trace preserving conditional expectation onto $\Delta_d$ is the map-to-diagonal, defined by
\begin{equation}\Delta(X) = \sum_{i=1}^d x_{ii} E_{ii}, \end{equation}
where $X = (x_{ij})$ and $E_{ij}$, $1 \leq i,j \leq d$, are the matrix units for $M_d$.

Recall that a correlation matrix is a positive semi-definite matrix with 1's down its main diagonal. Further, the Schur (or Hadamard) product of two matrices $A = (a_{ij}), B = (b_{ij})\in M_d$ is $A\circ B = (a_{ij}b_{ij})$. Given any $C\in M_d$, one can define a linear map $\Phi(X) = X \circ C$, and then $\Phi$ is completely positive if and only if $C$ is a positive semidefinite matrix \cite{paulsen}. It is also clear that such a map is unital if and only if it is trace preserving.

\begin{proposition}\label{schurprop} \cite{kye, LW}
Any unital channel $\Phi : M_d \rightarrow M_d$ whose fixed point algebra contains $\Delta_d$ is a Schur product channel; that is, there exists a correlation matrix $C$ such that
\[\Phi(X) = X\circ C, \] where $\circ$ denotes the Schur product.
\end{proposition}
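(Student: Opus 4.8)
The plan is to use the fixed-point characterization for unital channels recorded in the preliminaries, namely $\mathrm{Fix}(\Phi) = \{K_i\}'$, to force the Kraus operators to be diagonal, after which the Schur-product structure drops out by a direct entrywise computation. Fix a set of Kraus operators $\{K_i\}$ for $\Phi$. By hypothesis $\Delta_d \subseteq \mathrm{Fix}(\Phi) = \{K_i\}'$, so every diagonal matrix commutes with every $K_i$; equivalently each $K_i$ lies in the commutant $\Delta_d'$. The key observation is that $\Delta_d$ is a maximal abelian subalgebra of $M_d$, so $\Delta_d' = \Delta_d$, and hence each $K_i$ is itself diagonal, say $K_i = \mathrm{diag}(k_i^{(1)},\dots,k_i^{(d)})$.

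Next I would compute $\Phi$ entrywise. For diagonal $K_i$ one has $(K_i X K_i^*)_{jl} = k_i^{(j)}\,\overline{k_i^{(l)}}\,x_{jl}$, so summing over $i$ gives $\Phi(X)_{jl} = c_{jl}\,x_{jl}$ with $c_{jl} = \sum_i k_i^{(j)}\overline{k_i^{(l)}}$. This is exactly $\Phi(X) = X \circ C$ for the matrix $C = (c_{jl})$.

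It then remains to check that $C$ is a correlation matrix. Positive semidefiniteness is immediate from the representation $C = \sum_i v_i v_i^*$, where $v_i = (k_i^{(1)},\dots,k_i^{(d)})^{\t} \in \C^d$. For the diagonal entries, unitality of $\Phi$ (equivalently, trace preservation) gives $\sum_i K_i K_i^* = I$, whose $(j,j)$ entry reads $c_{jj} = \sum_i |k_i^{(j)}|^2 = 1$; thus $C$ has $1$'s along its diagonal.

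There is no serious obstacle here: the single substantive input is the fixed-point theorem $\mathrm{Fix}(\Phi)=\{K_i\}'$, and the decisive step is recognizing that $\Delta_d$ is its own commutant, which pins the $K_i$ to the diagonal. Alternatively, one could avoid invoking the commutant description of the Kraus operators and instead apply Lemma~\ref{peripheigs} with $\lambda = 1$ to the fixed points $E_{ii}$: this yields $\Phi(E_{ii} A E_{jj}) = E_{ii}\,\Phi(A)\,E_{jj}$ for all $A$, and taking $A = E_{ij}$ forces $\Phi(E_{ij}) = c_{ij}E_{ij}$ for scalars $c_{ij}$, with $c_{ii}=1$ since $E_{ii}$ is fixed; positive semidefiniteness of $C$ then follows from the Schur-multiplier characterization recalled just before the statement, that $X \mapsto X\circ C$ is completely positive exactly when $C \geq 0$.
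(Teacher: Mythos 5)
Your proof is correct. Note that the paper itself gives no argument for this proposition -- it is stated with a citation to the literature -- so there is nothing internal to compare against; your route (the fixed-point theorem $\mathrm{Fix}(\Phi)=\{K_i\}'$ forces $K_i\in\Delta_d'=\Delta_d$ since the diagonal algebra is maximal abelian, after which $\Phi(X)=X\circ C$ with $C=\sum_i v_iv_i^*$ the Gram matrix of the diagonals, and $c_{jj}=1$ from unitality) is the standard proof and is complete. Your alternative via Lemma~\ref{peripheigs} with $\lambda=1$ also works and has the small advantage of not needing to know that the commutant of the Kraus operators is representation-independent, since it argues directly that $\Phi(E_{ij})=c_{ij}E_{ij}$ and then invokes the Schur-multiplier positivity criterion already recalled in the paper.
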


%\begin{proof}
%By assumption $\Phi(E_{ii}) = E_{ii}$ for all $i$, and so the Choi matrix of $\Phi$ satisfies:
%\[
%J(\Phi) = \sum_{i,j=1}^d E_{ij}\otimes \Phi(E_{ij}) = \sum_{i=1}^d E_{ii} \otimes E_{ii} + \sum_{i\neq j} E_{ij}\otimes \Phi(E_{ij}) .
%\]
%Hence all diagonal entries other than those corresponding to $E_{ii}\otimes E_{ii}$ are zero, and so all entries in the same row and column as these zero diagonal entries are zero as well, since $J(\Phi)$ is positive semidefinite. This means that the only possible non-zero entries in $J(\Phi)$ are in the entries corresponding to the $M_d \otimes M_d$ matrix units $E_{ij}\otimes E_{ij}$. Indeed, any other entry is associated with matrix units of the form $E_{ij} \otimes E_{kl}$ where $k\neq i$ or $l\neq j$; in the first case is in the same row as the diagonal entry $E_{ii}\otimes E_{kk}$, which is zero when $k\neq i$; and the second case is in the same column as $E_{jj} \otimes E_{ll}$, which is zero when $l \neq j$.

%It follows that for all $i \neq j$, there is a $c_{ij}\in \C$ such that $\Phi (E_{ij}) = c_{ij} E_{ij}$. Together with letting $c_{ii}=1$ for all $i$, and $C=(c_{ij})$, we have $\Phi(X) = X \circ C$ for all $X\in M_d$, as claimed.
%\end{proof}

Since the commutant of $\mathcal A = \Delta_d$ is $\mathcal A^\prime = \Delta_d$ again, as a consequence of Proposition~\ref{schurprop}, we have Theorem~\ref{main} restated in this particular case as follows.

\begin{theorem}\label{diagvers}
There exists a constant $0 \leq p \leq 1$ such that for all Schur product channels $\Phi : M_d \rightarrow M_d$, the map
\[
p \Phi + (1-p)\Delta
\]
is a mixed unitary channel defined by diagonal unitary matrices.
\end{theorem}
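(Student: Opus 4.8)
The plan is to specialize the general machinery to the diagonal algebra, where every object simplifies dramatically, and then exhibit the convex combination explicitly. Since $\mathcal{A} = \Delta_d$ is abelian with $\mathcal{A}' = \Delta_d$ as well, the decomposition \eqref{vnalg} becomes $\mathcal{A} = \oplus_{k=1}^d \C 1$, so that $r = d$ and every $n_k = 1$. The commutant unitaries $\mathcal{U}_{\mathcal{A}'}$ are precisely the diagonal unitary matrices $U = \mathrm{diag}(e^{i\theta_1}, \ldots, e^{i\theta_d})$, and $\mathcal{E}_{\mathcal{A}} = \Delta$ is the map-to-diagonal. By Proposition~\ref{schurprop} every unital channel fixing $\Delta_d$ has the form $\Phi(X) = X \circ C$ for a correlation matrix $C$, so the class over which $p$ must be uniform is exactly the Schur product channels.

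\textbf{Key steps.} First I would record that a single diagonal unitary $U = \mathrm{diag}(u_1, \ldots, u_d)$ acts as a Schur multiplier: $UXU^* = X \circ (uu^*)$ where $u = (u_1, \ldots, u_d)^\t$ and $uu^*$ is a rank-one correlation matrix with unimodular entries. Hence any mixed unitary channel built from diagonal unitaries is itself a Schur product channel $X \mapsto X \circ C$, where $C = \sum_i \lambda_i u_i u_i^*$ is a convex combination of such rank-one correlation matrices. The heart of the theorem is therefore the following purely matrix-analytic claim: there is a constant $p$, depending only on $d$, such that for every correlation matrix $C$, the matrix $pC + (1-p)I_d$ lies in the convex hull of rank-one correlation matrices $uu^*$ with $|u_j| = 1$ for all $j$. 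I would prove this by applying Theorem~\ref{main} directly (which supplies such a $p$ with the mixed unitary channel realized by commutant unitaries, i.e.\ diagonal unitaries), or alternatively give the self-contained argument promised in the section: the extreme points of the convex body of correlation matrices that are themselves Schur-multiplier-implementable by unimodular rank-one matrices span an affine neighborhood of $I_d = \Delta$ inside the correlation-matrix body, and since $I_d$ is an interior point of the relevant set, a sufficiently small ball around it is contained in that convex hull. Translating $pC + (1-p)I_d$ back through the Schur-product correspondence then yields that $p\Phi + (1-p)\Delta$ is mixed unitary by diagonal unitaries.

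\textbf{Main obstacle.} The delicate point is establishing that the constant $p$ can be chosen \emph{uniformly} over all correlation matrices $C$ (equivalently, all Schur product channels), rather than depending on $C$. This is exactly where one invokes that $\Delta = \mathcal{E}_{\Delta_d}$ is an interior point of $\mathcal{F}(\Delta_d)$ in the appropriate affine sense: one needs a fixed-radius ball around the identity Choi data that stays inside the convex hull of the unimodular rank-one correlation matrices. Controlling this radius requires a compactness or explicit-estimate argument, and in the self-contained diagonal proof it reduces to understanding the extreme rays of the cone of correlation matrices and how $I_d$ sits relative to them; the combinatorial structure of unimodular rank-one correlation matrices (connected to characters of the torus $(\mathds{1})^d$, hence the Abelian group theory alluded to) is what makes the uniform bound tractable. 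Once the uniform $p$ is secured, unwinding the Schur-product dictionary is routine.
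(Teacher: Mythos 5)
Your primary route --- translating everything through the Schur-product dictionary (diagonal unitary conjugation $=$ Schur multiplication by a unimodular rank-one correlation matrix, $\Delta =$ Schur multiplication by $I_d$, Proposition~\ref{schurprop} for the class of channels) and then invoking Theorem~\ref{main} with $\mathcal{A} = \mathcal{A}' = \Delta_d$ --- is exactly how the paper frames Theorem~\ref{diagvers}, and it is logically sound since Theorem~\ref{main} is already proved. Where you diverge is in the ``self-contained alternative'': the paper does not argue via interiority of $I_d$ plus compactness. Instead it proves the matrix-analytic core (your Theorem~\ref{Haar}-type claim) by a single explicit computation: integrate the rank-one correlation matrices $zz^*$ over the torus against the \emph{positive weight} $\langle z, Cz\rangle$, i.e.\ evaluate $\int zz^* \langle z, Cz\rangle\, d\mu(z)$. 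The moment identities $\int z_i^k d\mu = 0$ for $k \neq 0$ collapse this to $C + (d-1)I_d$, which after normalization exhibits $\frac{1}{d}C + \frac{d-1}{d}I_d$ as a convex combination of unimodular rank-ones, giving the explicit uniform constant $p = 1/d$ with no compactness needed. Your interior-point sketch is plausible (the Haar centroid of $\{zz^*\}$ is $I_d$ and lies in the relative interior of their convex hull), but as written it is not carried out --- you acknowledge the uniform radius ``requires a compactness or explicit-estimate argument'' without supplying one --- and even completed it would yield only a nonconstructive $p$. The weighted-integral trick is the piece worth knowing here: it makes the uniformity over all $C$ automatic and the constant explicit in one line.
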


We provide the following alternative proof for this case.

\begin{lemma}
A channel $\Phi : M_d \rightarrow M_d$ is of the form $UXU^*$ where $U$ is a diagonal unitary if and only if $\Phi(X) = X \circ C$ where $C = z z^*$ is a rank-one correlation matrix with $z\in \C^d$ and $|z_i| = 1$ for all $i$.
\end{lemma}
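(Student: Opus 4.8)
The statement to prove is a clean characterization: a channel $\Phi$ is a single diagonal-unitary adjunction $X \mapsto UXU^*$ (with $U$ diagonal unitary) if and only if $\Phi(X) = X \circ C$ where $C = zz^*$ is a rank-one correlation matrix with unimodular entries. The plan is to prove both directions directly by computation, since each side is a very concrete object.

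For the forward direction, I would start with a diagonal unitary $U = \operatorname{diag}(z_1, \ldots, z_d)$ where each $|z_i| = 1$, and simply compute the action of $\mathrm{ad}_U$ on a matrix $X = (x_{ij})$. Since $U$ is diagonal, $(UXU^*)_{ij} = z_i x_{ij} \overline{z_j}$, so $UXU^* = X \circ (z z^*)$ where $z = (z_1, \ldots, z_d)^\t$. I would then verify that $C := zz^*$ is indeed a correlation matrix: it is manifestly positive semidefinite (rank one, of the form $zz^*$), and its diagonal entries are $z_i \overline{z_i} = |z_i|^2 = 1$, so it has ones on the diagonal. This gives $\Phi = \mathrm{ad}_U$ in Schur-product form $X \mapsto X \circ C$ with $C$ a rank-one unimodular correlation matrix.

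For the converse, I would begin with $\Phi(X) = X \circ C$ where $C = zz^*$ for some $z \in \C^d$, and where the correlation-matrix hypothesis forces $C_{ii} = |z_i|^2 = 1$, hence each $|z_i| = 1$. Setting $U = \operatorname{diag}(z_1, \ldots, z_d)$, which is unitary precisely because the $z_i$ are unimodular, the same entrywise computation $(UXU^*)_{ij} = z_i \overline{z_j} x_{ij} = C_{ij} x_{ij}$ shows $\Phi(X) = UXU^*$. One subtle point worth flagging: $z$ is only determined up to a global phase, but this does not affect $C = zz^*$ nor the resulting map $\mathrm{ad}_U$, so the correspondence is well-defined.

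I do not expect a serious obstacle here, as both directions reduce to the elementary observation that conjugation by a diagonal matrix acts entrywise as a Schur product. The only point requiring mild care is the ``rank-one'' clause: in the forward direction I must note that $zz^*$ has rank one (or is zero, which is excluded since $|z_i|=1$), and in the converse I should confirm that a rank-one positive semidefinite correlation matrix is necessarily of the form $zz^*$ with $z$ the (unique up to phase) nonzero column, automatically inheriting unimodular entries from the unit-diagonal condition. Thus the entire argument is a matter of matching the entrywise formula $(\mathrm{ad}_U X)_{ij} = z_i \overline{z_j}\, x_{ij}$ against the Schur multiplier $C_{ij} = z_i \overline{z_j}$, and the main work is simply recording these identifications cleanly in both directions.
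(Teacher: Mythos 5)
Your proof is correct and follows essentially the same route as the paper: both directions reduce to the entrywise identity $(UXU^*)_{ij} = z_i\overline{z_j}x_{ij}$, with the unit-diagonal condition on $C = zz^*$ forcing $|z_i| = 1$ in the converse. The extra remarks about the global-phase ambiguity and the rank-one clause are fine but not needed beyond what the paper records.
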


\begin{proof}
If $U = \mathrm{diag}(z_1,\cdots, z_d)$ is unitary, then $|z_i| = 1$ and
\[UXU^* = \sum_{i,j} z_iz_j^*x_{ij} E_{ij} = X\circ C , \] where $c_{ij} = z_iz_j^*$.

Conversely, if $C$ is a rank-one correlation matrix, then $C = zz^*$ for some vector $z = (z_1,\cdots, z_d)^T$. We have $c_{ii} = 1 = z_iz_i^* = |z_i|^2$, and now it is easy to see that $X\circ C$ is equal to $UXU^*$ where $U = \mathrm{diag}(z)$ is unitary since each $|z_i| = 1$.
\end{proof}

Since the map-to-diagonal $\Delta$ is equal to the Schur-product channel with the correlation matrix $I_d$, Theorem \ref{diagvers} can be restated as follows. This is the version that we prove; equivalence to the previously stated version follows by replacing all Schur product maps with their associated correlation matrices or vice-versa.

\begin{theorem}\label{Haar}
There exists a constant $0 \leq p \leq 1$ such that every $d\times d$ correlation matrix $C$ satisfies that
\[
p C + (1-p)I_d
\]
is in the convex hull of rank-one correlation matrices.
\end{theorem}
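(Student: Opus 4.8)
The plan is to give a direct, ``abelian'' proof by constructing an explicit probability measure on the torus of diagonal unitaries and taking its barycenter. Recall from the preceding Lemma that the rank-one correlation matrices are exactly the matrices $zz^*$ with $z\in\mathbb{T}^d$ (i.e. $|z_i|=1$ for all $i$), and that normalized Haar measure on the group of diagonal unitaries is simply the product of $d$ copies of normalized Lebesgue measure on the circle. The problem therefore reduces to finding a probability density $f$ on $\mathbb{T}^d$ whose ``degree-one'' moments $\int_{\mathbb{T}^d} z_a\bar z_b\, f(z)\,d\mu(z)$ match the entries of $pC+(1-p)I_d$, because then $\int_{\mathbb{T}^d} zz^*\,f(z)\,d\mu(z)$ is the desired matrix and, being the barycenter of a probability measure supported on the (compact) set of rank-one correlation matrices, automatically lies in its convex hull.

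The key idea, and the one nontrivial step, is the choice of density. I would take
\[
f(z)=\frac{1}{d}\,z^*Cz,
\]
which is legitimate precisely because $C$ is a correlation matrix: positive semidefiniteness of $C$ makes $f\ge 0$, while $\tr(C)=d$ together with the orthogonality relation $\int_{\mathbb{T}^d} \bar z_i z_j\,d\mu(z)=\delta_{ij}$ gives $\int_{\mathbb{T}^d} f\,d\mu=1$, so that $f$ is a genuine probability density.

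The remaining step is a routine moment computation on the torus, driven entirely by the elementary fact that $\int_{\mathbb{T}^d} z_a z_q \bar z_b \bar z_p\,d\mu(z)$ equals $1$ when $\{a,q\}=\{b,p\}$ as multisets and $0$ otherwise. Expanding $f(z)=\tfrac1d\sum_{p,q}\bar z_p\, c_{pq}\, z_q$ and integrating $z_a\bar z_b\, f$, one checks that for $a\neq b$ only the cross term $p=a,\ q=b$ survives, contributing $\tfrac1d c_{ab}$, whereas for $a=b$ only the diagonal terms $p=q$ survive, contributing $\tfrac1d\sum_p c_{pp}=1$. Hence $\int_{\mathbb{T}^d} zz^*\,f(z)\,d\mu(z)=\tfrac1d C+\tfrac{d-1}{d}I_d$, so that the uniform constant $p=1/d$ works.

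I expect the construction of $f$ to be the crux: the essential point is that the \emph{positive semidefiniteness} of $C$ is exactly what is needed to produce a nonnegative density, and it is the Hermitian quadratic form $z^*Cz$ (as opposed to some other bilinear combination of the $c_{ij}$) that is simultaneously nonnegative and carries the correct degree-one moments. Once the density is in hand the barycenter argument is immediate, and no appeal to Theorem~\ref{main} or the $L(\cdot)$ machinery of Section~3 is required; reassuringly, this recovers exactly the constant $p=1/d$ that the general theorem produces in the degenerate case $n_k\equiv 1$. I would close by noting that, since $M_d$ is finite-dimensional and the rank-one correlation matrices form a compact set, Carathéodory's theorem upgrades the integral representation to a genuine finite convex combination.
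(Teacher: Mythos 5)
Your proposal is correct and is essentially the same argument as the paper's: the paper integrates $zz^*\,\langle z,Cz\rangle$ over the torus with respect to product Haar measure and normalizes at the end, which is exactly your density $f(z)=\tfrac1d z^*Cz$ with the normalization done up front, and both computations land on $\tfrac1d C+\tfrac{d-1}{d}I_d$, i.e.\ $p=1/d$. No gaps; the moment calculation and the use of positive semidefiniteness of $C$ for nonnegativity of the weight match the paper exactly.
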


\begin{proof} Let $C$ be a correlation matrix. Let $z = (z_1,\cdots, z_d)^T$ where $|z_i| = 1$, and take the integral
\begin{equation}
\int_{z_1,\cdots, z_d} zz^* \langle z,Cz\rangle d\mu(z_1)\cdots d\mu(z_d) ,
\end{equation}
where the measure is just Haar measure on the unit circle. As $\langle z, Cz\rangle = \sum_{k,l=1}^d c_{kl}z_k^*z_l$, we can write this as
\[
\sum_{k,l}c_{kl} \sum_{i,j}E_{ij} \int_z z_iz_j^*z_k^*z_l d\mu(z) ,
\]
and since $\int z_i^k d\mu(z_i) = 0$ for any $k\neq 0$, the only non-zero terms in this sum come when either $i =j$ and $k=l$ or $i=k$ and $j=l$, or the intersection, $i=j=k=l$. Thus, to avoid double-counting, we get the following result:
\[
\sum_{k,l}c_{kl} E_{kl} + \sum_{i,k} c_{kk}E_{ii} - \sum_i c_{ii}E_{ii}
\]
which, since $c_{ii} = 1$, is just
\[
C + (d-1)I_d.
\]
After suitably normalizing, we see that the integral gives $\frac{1}{d}(C + (d-1)I_d)$. Since $zz^*$ is always a rank-one correlation matrix, and $\langle z,Cz\rangle$ is always positive, we have written this correlation matrix as a positive combination of rank-ones; normalizing makes it a convex combination, proving the result, with $p = \frac{1}{d}$.
\end{proof}

\begin{remark}
{\rm 
We mention here that the above result elucidates the fact that the identity matrix is in the interior of the set of all correlation matrices that can be written as a convex combination of rank-1 correlation matrices. This fact was previously pointed out in the article \cite{dykema-juschenko} (cf. section 4). Here we have found a new way to realize this fact and our method evidently provides better estimates of the convex combinations in some cases, based on a cursory comparison to the estimates of \cite{dykema-juschenko}.
}
\end{remark}

%%%%%%%

\subsection{Group Theory Approach}

%We have discovered an alternate group-theoretic approach to proving the result above which gives a tighter estimate in general, and so we include it here.

Let $G$ be an Abelian group. We let $\widehat{G}$ be the set of all group homomorphisms from $G$ to $\mathbb{T}$, the unit circle in the complex plane.  The set $\widehat{G}$ is a group under multiplication and is called the dual group.  The Abelian groups $\mathbb{Z}^d$ and $\mathbb{T}^d$ are duals to one another and any finite Abelian group is self-dual.  Let $\mu$ be any measure on an Abelian group $G$, then the Fourier transform of $\mu$ is the complex valued function on $\widehat{G}$ defined as follows: $\widehat{\mu}(\chi)=\int_G \chi(g) d\mu (g)$.  A complex-valued function on $\widehat{G}$ is said to be positive definite if it is the Fourier transform of a measure on $G$.
Reminiscent of the standard basis in linear algebra, if our group $G^d$ is either $G=\mathbb{Z}_m$ or $G=\mathbb{Z}$, then $e_k$ denotes the element in $G^d$ consisting of an $n$-tuple of elements of $G$ where the $k$th element is $1$ and all other elements are $0$.

We can characterize the convex hulls of rank one correlation matrices in both the real and the complex cases in terms of positive definite functions. The real version of the result which we present first is essentially equivalent to \cite[Proposition 2.1]{guptpar} and \cite[Theorem 7]{o2013self}.

\begin{theorem} \label{BGP} Let $C$ be an $d \times d$ real matrix.  Then $C$ is in the convex hull of the real rank one correlation matrices if and only if there exists a positive definite function $f:\mathbb{Z}_2^d\to \mathbb{R}$ with the following properties:
\begin{enumerate}
\item $f(0)=1$
\item $f(e_i-e_j)=c_{ij}$ for $1\le i<j \le d$
\end{enumerate} \end{theorem}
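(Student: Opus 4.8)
The plan is to translate the problem into harmonic analysis on the finite abelian group $\mathbb{Z}_2^d$, under which convex combinations of real rank-one correlation matrices correspond exactly to Fourier transforms of probability measures, i.e. to positive definite functions $f$ normalized by $f(0)=1$. First I would parametrize the real rank-one correlation matrices: a real symmetric rank-one matrix with $1$'s on the diagonal is $zz^{\t}$ with $z\in\{\pm1\}^d$, and writing $z_i=(-1)^{g_i}$ identifies such matrices with elements $g\in\mathbb{Z}_2^d$, the corresponding matrix $P_g$ having entries $(P_g)_{ij}=(-1)^{g_i+g_j}$. Every element of the convex hull is then $C=\sum_{g\in\mathbb{Z}_2^d}\mu(g)P_g$ for a probability measure $\mu$ (nonnegative weights summing to $1$) on $\mathbb{Z}_2^d$, with entries $c_{ij}=\sum_g\mu(g)(-1)^{g_i+g_j}$.

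Next I would recognize this sum as a Fourier transform. Viewing $\mathbb{Z}_2^d$ as self-dual with pairing $\langle s,g\rangle=\sum_i s_i g_i \bmod 2$ and characters $\chi_s(g)=(-1)^{\langle s,g\rangle}$, and noting that in $\mathbb{Z}_2^d$ one has $e_i-e_j=e_i+e_j$ with $\langle e_i+e_j,g\rangle = g_i+g_j$, the expression above reads $c_{ij}=\widehat{\mu}(e_i-e_j)$, while the diagonal gives $c_{ii}=\widehat{\mu}(0)=\sum_g\mu(g)$. Setting $f=\widehat{\mu}$, the normalization $\sum_g\mu(g)=1$ is exactly $f(0)=1$, and condition (2) becomes $f(e_i-e_j)=c_{ij}$.

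With this dictionary both directions are short. For the forward implication, if $C=\sum_g\mu(g)P_g$ lies in the convex hull, then $f:=\widehat{\mu}$ is positive definite by definition, and the computations above give (1) and (2). For the converse, a positive definite $f$ is by definition $\widehat{\mu}$ for a nonnegative measure $\mu$ on $\mathbb{Z}_2^d$; condition (1) forces $\sum_g\mu(g)=1$, so $\mu$ is a probability measure, and then $C':=\sum_g\mu(g)P_g$ is a convex combination of rank-one correlation matrices whose off-diagonal entries are $f(e_i-e_j)=c_{ij}$ and whose diagonal entries equal $1$; since $C$ is taken to have unit diagonal (as any member of the convex hull must), $C=C'$.

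I expect the point requiring the most care to be the equivalence between nonnegativity of the representing measure $\mu$ and positive definiteness of $f$ — that is, that the paper's Fourier-transform definition is precisely Bochner's criterion on the finite group $\mathbb{Z}_2^d$, so that ``positive definite'' delivers a genuinely nonnegative $\mu$. A related subtlety is that conditions (1)--(2) pin down $f$ only on the subset $\{0\}\cup\{e_i+e_j:i<j\}$ of $\mathbb{Z}_2^d$, leaving its values elsewhere (e.g. at the $e_i$) free; it is exactly this freedom that the existential quantifier exploits, since positive definiteness constrains the whole function. Finally I would note that the $2$-to-$1$ redundancy $z\mapsto -z$ in the parametrization is harmless, affecting only the uniqueness of $\mu$ and not surjectivity onto the convex hull.
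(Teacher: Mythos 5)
Your proof is correct and is essentially the paper's own argument: the paper proves the common generalization (Theorem~\ref{BGP3}) by identifying rank-one correlation matrices $vv^*$ with point measures $\delta_v$ on $G^d$ whose Fourier transforms satisfy $\widehat{\delta}_v(e_i-e_j)=v_i\overline{v_j}$, and then obtains Theorem~\ref{BGP} by setting $G=\mathbb{Z}_2$, which is exactly your dictionary between convex combinations of $zz^{\t}$, $z\in\{\pm1\}^d$, and probability measures on $\mathbb{Z}_2^d$. The only difference is that you work directly on the finite group (where no Banach--Alaoglu/Krein--Milman step is needed), and your cautionary remarks about Bochner's criterion and the implicit unit-diagonal assumption on $C$ match the paper's conventions.
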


The complex version of this theorem, which we now state, appears to be new.

\begin{theorem}\label{comex} Let $C$ be an $d \times d$ complex matrix.  Then $C$ is in the convex hull of the complex rank one correlation matrices if and only if there exists a positive definite function $f:\mathbb{Z}^d\to \mathbb{C}$ with the following properties:
\begin{enumerate}
\item $f(0)=1$
\item $f(e_i-e_j)=c_{ij}$ for $1\le i<j \le d$
\end{enumerate} \end{theorem}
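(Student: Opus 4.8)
The plan is to route everything through the standard correspondence between probability measures on the dual torus and points of the convex hull, with the bridge being a single entrywise identity. For $z = (z_1,\dots,z_d)\in\mathbb{T}^d$ and $n\in\mathbb{Z}^d$, write $\langle n,z\rangle = \prod_{k} z_k^{\,n_k}$ for the dual pairing between $\mathbb{Z}^d$ and $\mathbb{T}^d$. The key observation is that $\langle e_i - e_j, z\rangle = z_i z_j^{-1} = z_i\overline{z_j} = (zz^*)_{ij}$, since $|z_j|=1$; that is, the $(i,j)$ off-diagonal entry of a rank-one correlation matrix $zz^*$ is exactly the value of its defining character at the group element $e_i-e_j$, while the unit diagonal corresponds to $e_i-e_i=0$. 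This is precisely where the differences $e_i-e_j$ in property (2) come from, and it is the complex analogue of the computation underlying Theorem~\ref{BGP}.

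For the forward implication, suppose $C$ lies in the convex hull, so $C = \sum_\ell \lambda_\ell\, z^{(\ell)}(z^{(\ell)})^*$ for finitely many $z^{(\ell)}\in\mathbb{T}^d$ and weights $\lambda_\ell\ge 0$ with $\sum_\ell \lambda_\ell = 1$. I would form the finitely supported probability measure $\mu = \sum_\ell \lambda_\ell\, \delta_{z^{(\ell)}}$ on $\mathbb{T}^d$ and set $f = \widehat{\mu}$. By the definition of positive definiteness adopted above (viewing $\mathbb{Z}^d = \widehat{\mathbb{T}^d}$), $f$ is positive definite; evaluating gives $f(0) = \sum_\ell \lambda_\ell = 1$ and $f(e_i-e_j) = \sum_\ell \lambda_\ell\, z^{(\ell)}_i\overline{z^{(\ell)}_j} = c_{ij}$, which is exactly what properties (1) and (2) demand.

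For the converse, suppose an $f$ with properties (1) and (2) exists. Since $f$ is positive definite on $\mathbb{Z}^d = \widehat{\mathbb{T}^d}$, by definition $f = \widehat{\mu}$ for some positive measure $\mu$ on $\mathbb{T}^d$, and $f(0) = \mu(\mathbb{T}^d) = 1$ makes $\mu$ a probability measure. I would then define $C' := \int_{\mathbb{T}^d} zz^*\, d\mu(z)$ and read off its entries through the bridge identity: $(C')_{ij} = \int_{\mathbb{T}^d} z_i\overline{z_j}\, d\mu(z) = f(e_i-e_j) = c_{ij}$ for $i<j$, $(C')_{ii} = \int_{\mathbb{T}^d} 1\, d\mu = 1$, and Hermiticity of each $zz^*$ then fixes the lower triangle, so $C' = C$. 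It remains only to see that this barycenter genuinely lies in the convex hull of rank-one correlation matrices.

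The step I expect to be the main (if modest) obstacle is precisely this last point: arguing that the integral average $\int_{\mathbb{T}^d} zz^*\, d\mu(z)$ of rank-one correlation matrices is an honest point of the convex hull rather than merely of its closure. Here I would use that the set $K = \{\,zz^* : z\in\mathbb{T}^d\,\}$ is the continuous image of the compact torus $\mathbb{T}^d$, hence compact; by Carath\'eodory its convex hull $\operatorname{conv}(K)$ is compact and therefore closed, and the barycenter of a probability measure supported on a compact set lies in its closed convex hull, giving $C = C' \in \operatorname{conv}(K)$. Everything else is bookkeeping with the dual pairing, the only real care being to fix the convention $\mathbb{Z}^d = \widehat{\mathbb{T}^d}$ so that the character indexed by $z$ sends $e_i-e_j$ to $z_i\overline{z_j}$, matching the $(i,j)$ entry of $zz^*$.
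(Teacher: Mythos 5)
Your proof is correct and follows essentially the same route as the paper's (which proves the statement as the $G=\mathbb{T}$ case of a common generalization): the same dual-pairing identity $\langle e_i-e_j,z\rangle = z_i\overline{z_j} = (zz^*)_{ij}$, point measures for the forward direction, and the Bochner-type representation $f=\widehat{\mu}$ for the converse. Your final step, using Carath\'eodory to show $\operatorname{conv}\{zz^* : z\in\mathbb{T}^d\}$ is compact so that the barycenter lands in the convex hull itself rather than merely its closure, is a welcome explicit treatment of a point the paper only sketches via Banach--Alaoglu and Krein--Milman.
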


We can combine these two versions into a common generalization as follows.

\begin{theorem} \label{BGP3} Let $C$ be an $d \times d$ complex matrix. Let $G$ be any topologically closed subgroup of $\mathbb{T}$.   Then $C$ is in the convex hull of the rank one correlation matrices with all entries in $G$ if and only if there exists a positive definite function $f:\widehat{G}^d\to \mathbb{C}$ with the following properties:
\begin{enumerate}
\item $f(0)=1$
\item $f(e_i-e_j)=c_{ij}$ for $1\le i<j \le d$
\end{enumerate} \end{theorem}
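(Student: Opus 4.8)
The plan is to prove Theorem~\ref{BGP3} as a direct common generalization of Theorems~\ref{BGP} and~\ref{comex}, exploiting the duality machinery already set up. The key observation is that a rank-one correlation matrix with all entries in a closed subgroup $G \subseteq \mathbb{T}$ is precisely a matrix of the form $zz^*$ where $z = (z_1,\dots,z_d)^\t$ with each $z_i \in G$ (since $c_{ij} = z_i z_j^*$ must lie in $G$, and $c_{ii} = |z_i|^2 = 1$ forces $z_i \in \mathbb{T}$, while membership in $G$ of the products pins the coordinates to $G$ up to a common phase). The space of such $z$ is exactly the character group interpretation: each coordinate $z_i$ is a character evaluated at a generator, so a tuple $(z_1,\dots,z_d)$ corresponds to a point in $\widehat{G}^{\,d}$ under the duality $\widehat{\widehat{G}} \cong G$.

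First I would make the correspondence between convex combinations of rank-one correlation matrices and measures precise. Writing a general element of the convex hull as $C = \int zz^*\, d\nu(z)$ for a probability measure $\nu$ supported on $G$-valued unit vectors, the $(i,j)$ entry is $c_{ij} = \int z_i z_j^*\, d\nu(z)$. Parametrizing $z_i = \chi(e_i)$ for $\chi$ ranging over $\widehat{G}^{\,d}$ (identifying the $i$th coordinate with the value of a character on the $i$th standard generator $e_i$ of $G^d$), this becomes $c_{ij} = \int_{\widehat{G}^d} \chi(e_i - e_j)\, d\nu(\chi)$, which is exactly the Fourier transform $\widehat{\nu}(e_i - e_j)$ of the measure $\nu$ on $\widehat{G}^{\,d}$, whose dual is $\widehat{\widehat{G}}^{\,d} = G^d$ by Pontryagin duality. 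Setting $f = \widehat{\nu}$, condition~(1) $f(0) = \int d\nu = 1$ records that $\nu$ is a probability measure, and condition~(2) $f(e_i - e_j) = c_{ij}$ records the matrix entries. Thus $C$ lies in the convex hull if and only if the prescribed data extend to such a Fourier transform, which by Bochner's theorem for locally compact abelian groups is precisely the statement that $f$ is a positive definite function on $\widehat{G}^{\,d}$.

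The two directions then run as follows. For the forward direction, given $C$ in the convex hull, the measure $\nu$ realizing it produces $f = \widehat{\nu}$, which is automatically positive definite (being a Fourier transform of a positive measure) and satisfies (1) and (2) by the computation above. For the converse, given a positive definite $f$ satisfying (1) and (2), Bochner's theorem furnishes a positive measure $\nu$ on $G^d = \widehat{\widehat{G}}^{\,d}$ with $\widehat{\nu} = f$; condition (1) normalizes it to a probability measure, and reading off $c_{ij} = \widehat{\nu}(e_i - e_j)$ from (2) expresses $C$ as the integral $\int zz^* d\nu$, a convex combination of rank-one correlation matrices with entries in $G$. Recovering Theorem~\ref{comex} is the case $G = \mathbb{T}$, for which $\widehat{\mathbb{T}} = \mathbb{Z}$, and Theorem~\ref{BGP} is the case $G = \{\pm 1\} \cong \mathbb{Z}_2$, self-dual.

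The main obstacle I anticipate is bookkeeping in the duality rather than anything deep: one must correctly identify which group carries the measure and which carries $f$, and handle the diagonal-entry constraint $c_{ii} = 1$ and the common-phase ambiguity of $z$ so that the parametrization by $\widehat{G}^{\,d}$ is genuinely onto the rank-one $G$-correlation matrices without overcounting. The other point requiring care is invoking Bochner's theorem in the correct generality for a locally compact abelian group $\widehat{G}^{\,d}$ (compact when $G$ is finite, but $\widehat{G}^{\,d} = \mathbb{Z}^d$ when $G = \mathbb{T}$), ensuring the positive-definite-function hypothesis matches the hypothesis of the Bochner correspondence being used.
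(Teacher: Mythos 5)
Your proposal is correct and follows essentially the same route as the paper: identify rank-one $G$-correlation matrices with point masses on $G^d$, pass to Fourier transforms of probability measures via Pontryagin duality, and use positive definiteness (for you via Bochner's theorem; for the paper, by its very definition of positive definite as the Fourier transform of a measure) to reverse the construction. The only step worth making explicit is the passage from the barycenter $\int zz^*\,d\nu$ back to a finite convex combination, which the paper handles with Banach--Alaoglu and Krein--Milman in the $G=\mathbb{T}$ case and which in finite dimensions also follows from compactness of $\{zz^* : z\in G^d\}$.
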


Setting $G=\mathbb{Z}_2$ in Theorem \ref{BGP3} gives us Theorem \ref{BGP} and setting $G=\mathbb{T}$ in Theorem \ref{BGP3} gives us Theorem \ref{comex}.  Hence we only need prove Theorem \ref{BGP3}.

\begin{proof} Let $G$ be any topologically closed subgroup of $\mathbb{T}$.  If $v$ is any $n$-vector all of whose entries are in $G$, then let $\delta_v$ denote the probability measure on $G^d$ satisfying $\delta_v(\{v\})=1$. Let $\widehat{\delta}_v$ be the corresponding positive definite function (i.e. for any $\chi \in \widehat{G}^d$, $\widehat{\delta}_v(\chi)=\int_G \chi(g) d\delta_v=\chi(v))$.  If $\chi=(c_1,c_2,...,c_d)\in \widehat{G}^d$ and $v=(v_1,v_2,...,v_d)\in G^d$, then $\chi(v)=\prod_{k=1}^dv_k^{c_k}$.  Hence $\widehat{\delta}_v(e_i-e_j)=v_iv_j^{-1}=v_i\overline{v_j}$ since $|v_j|=1$.  Therefore for all $i,j$, $\widehat{\delta}_v(e_i-e_j)$ is the $(i,j)$th entry of the matrix $vv^*$.  Now if $C$ is in the convex hull of the rank one correlation matrices with all entries in $G$, there exists $\{ \lambda_i\}_i$ positive numbers summing to one and $\{ v_i\}_i$ $n$-vectors having all elements in $G$ such that $C=\sum_i\lambda_iv_iv_i^*$.  It follows from linearity that the Fourier transform of the probability measure $\sum_i \lambda_i\delta_{v_i}$ is the $f$ which satisfies all the hypotheses of the theorem.  The converse follows by reversing the steps of this argument. For the $G=\mathbb{T}$ case, we note that the set of all probability measures on $\mathbb{T}$ is weak-$*$ compact by the Banach-Alaoglu theorem and hence is the closed convex hull of the point measures on $\mathbb{T}$ by the Krein-Milman theorem. The result now follows using a similar argument to the topologically closed subgroup case.
\end{proof}

 %In \cite{o2013self}, we used Theorem \ref{BGP} to give us a lower bound for a rank-dependent variant of the problem from \cite{marshall}.

 We can use Theorem \ref{comex} to construct an improvement on Theorem \ref{Haar}.  We begin with the following lemma which gives a useful example of a positive definite function on the integers.

 \begin{lemma} Let $c$ be a complex number of modulus less than or equal to one. Then the function  $f_c(n):\mathbb{Z}\to \mathbb{C}$ defined as
 \[f_c(n)= \begin{cases}
      c^n & n\geq 0 \\
      \overline{c}^n & n<0
   \end{cases}
\]
is positive definite.
\end{lemma}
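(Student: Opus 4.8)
The plan is to prove positive definiteness in the exact sense used in this subsection, namely by exhibiting $f_c$ as the Fourier transform of a positive measure on the relevant group. Since $f_c$ is a function on $\mathbb{Z} = \widehat{\mathbb{T}}$, and the character of $\mathbb{T}$ indexed by $n \in \mathbb{Z}$ is $w \mapsto w^n$, the task reduces to producing a positive measure $\mu$ on $\mathbb{T}$ (automatically a probability measure, since $f_c(0) = 1$ forces total mass one) whose moments recover $f_c$, i.e.
\[
\int_{\mathbb{T}} w^n \, d\mu(w) = f_c(n) \qquad (n \in \mathbb{Z}).
\]
Note first the Hermitian symmetry $f_c(-n) = \overline{f_c(n)}$, which is consistent with such a real positive measure existing.

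First I would guess the density by formal Fourier inversion, setting $g(\theta) = \sum_{n \in \mathbb{Z}} f_c(n) e^{-in\theta}$. For $|c| < 1$ both one-sided geometric series converge absolutely, and summing $\sum_{n \geq 0} c^n e^{-in\theta}$ together with $\sum_{n < 0} \overline{c}^{\,|n|} e^{-in\theta}$ gives
\[
g(\theta) = \frac{1}{1 - c e^{-i\theta}} + \frac{\overline{c}\,e^{i\theta}}{1 - \overline{c}\,e^{i\theta}} = \frac{1 - |c|^2}{\bigl| 1 - c e^{-i\theta} \bigr|^2},
\]
which is precisely the Poisson kernel and is manifestly nonnegative because $1 - |c|^2 \geq 0$. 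Setting $d\mu = g(\theta)\,\tfrac{d\theta}{2\pi}$ then defines a positive measure on $\mathbb{T}$, and a term-by-term integration using orthogonality of $\{e^{in\theta}\}$ confirms that its moments are exactly $f_c(n)$. Hence $f_c = \widehat{\mu}$ is positive definite.

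For the boundary case $|c| = 1$ the two defining branches coincide, giving $f_c(n) = c^n$ for every $n$; this is the Fourier transform of the point mass $\delta_c$ at $c \in \mathbb{T}$ (equivalently, the Poisson kernel collapses to a Dirac measure as $|c| \to 1$), so positive definiteness persists. A more modular alternative writes $c = r e^{i\alpha}$ so that $f_c(n) = r^{|n|} e^{in\alpha}$, a pointwise product of the real Poisson-type positive definite function $n \mapsto r^{|n|}$ and the character $n \mapsto e^{in\alpha}$; since positive definite functions are closed under pointwise products (convolution of the underlying measures), this recovers the claim without an explicit kernel computation.

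The only genuine step is the identification of the collapsed geometric sum with the Poisson kernel and the observation that it is nonnegative; everything else is bookkeeping with geometric series and orthogonality. I therefore expect this positivity step to be the crux, together with the minor care needed at $|c| = 1$, where the absolutely continuous density must be replaced by a point mass.
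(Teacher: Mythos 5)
Your proof is correct and takes essentially the same route as the paper: both compute the Fourier series of $f_c$ on $\mathbb{T}$ and verify that the resulting density is nonnegative, you by collapsing the geometric sums directly to the Poisson kernel $\tfrac{1-|c|^2}{|1-ce^{-i\theta}|^2}$, the paper by observing that $z\mapsto \tfrac{1}{1-z}$ maps the closed disk into $\{\operatorname{Re} z\ge \tfrac12\}$ so that $2\operatorname{Re}\tfrac{1}{1-ce^{i\theta}}-1\ge 0$ (an algebraically identical fact). Your handling of the boundary case $|c|=1$ via the point mass $\delta_c$ is in fact slightly more careful than the paper's, which only singles out $c=1$.
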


\begin{proof} Note that the Mobius transformation $g(z)=\frac{1}{1-z}$ maps the closed unit disk of the complex plane to the half plane $\{z: Re(z)\ge \frac{1}{2}\}$.  It follows from this that when $\vert c\vert\le 1$ and $c\neq 1$, then $\widehat{f_c}(e^{i\theta})=\sum_{k\in \mathbb{Z}}f_c(k)e^{ik\theta}=-1+\frac{1}{1-ce^{i\theta}}+ \frac{1}{1-ce^{-i\theta}}\geq 0$.  Hence $f_c$ is positive definite.  The function $f_1$ is the Fourier transform of the point measure at zero and hence is positive definite. \end{proof}

This has some important implications for correlation matrices.

\begin{corollary} \label{corold} Let $v=(v_1,v_2,..,v_d)\in \mathbb{C}^d$ with $\Vert v\Vert_{\infty}\le 1$ and let $M(v)$ denote the $d \times d$ matrix having the same off-diagonal entries as $vv^*$ and all diagonal entries equal to one.  Then $M(v)$ is in the convex hull of the complex rank one correlation matrices. \end{corollary}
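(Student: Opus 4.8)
The plan is to reduce the statement to the characterization in Theorem~\ref{comex} and then to construct the required positive definite function on $\mathbb{Z}^d$ as a product of the single-variable functions $f_c$ supplied by the preceding lemma. By Theorem~\ref{comex}, $M(v)$ lies in the convex hull of the complex rank one correlation matrices exactly when there is a positive definite function $f:\mathbb{Z}^d\to\mathbb{C}$ with $f(0)=1$ and $f(e_i-e_j)=m_{ij}$ for $1\le i<j\le d$, where $m_{ij}=v_i\overline{v_j}$ is the $(i,j)$ off-diagonal entry of $M(v)$. Thus the entire task becomes producing a single positive definite function with these prescribed values on the differences $e_i-e_j$.

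Since $\|v\|_\infty\le 1$, each coordinate satisfies $|v_k|\le 1$, so the preceding lemma tells us each $f_{v_k}:\mathbb{Z}\to\mathbb{C}$ is positive definite, i.e.\ is the Fourier transform of a nonnegative measure $\mu_k$ on $\mathbb{T}$. I would then set
\[
f(n_1,\ldots,n_d)=\prod_{k=1}^d f_{v_k}(n_k),\qquad (n_1,\ldots,n_d)\in\mathbb{Z}^d .
\]
The key point is that $f$ is the Fourier transform of the product measure $\mu_1\times\cdots\times\mu_d$ on $\mathbb{T}^d$, because the character of $\mathbb{T}^d$ indexed by $(n_1,\ldots,n_d)$ factors coordinatewise and the integral of a product against a product measure splits as a product of integrals. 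As a finite product of nonnegative measures is again nonnegative, $f$ is positive definite on $\mathbb{Z}^d$.

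It remains only to evaluate $f$ at the two relevant types of points. Since $f_{v_k}(0)=1$ for every $k$, we get $f(0)=\prod_k f_{v_k}(0)=1$. For $i<j$, the vector $e_i-e_j$ has a $+1$ in coordinate $i$, a $-1$ in coordinate $j$, and zeros elsewhere, so
\[
f(e_i-e_j)=f_{v_i}(1)\,f_{v_j}(-1)\prod_{k\ne i,j}f_{v_k}(0)=v_i\,\overline{v_j},
\]
using $f_{v_i}(1)=v_i$, the reflection property $f_{v_j}(-1)=\overline{f_{v_j}(1)}=\overline{v_j}$ read off from the lemma, and $f_{v_k}(0)=1$. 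This matches $m_{ij}$, so Theorem~\ref{comex} applies and gives the claim; the Hermitian lower triangle and the unit diagonal of $M(v)$ cause no trouble, the former following from $f(-n)=\overline{f(n)}$ and the latter holding by the definition of $M(v)$.

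The only genuinely delicate point I anticipate is the justification that the coordinatewise product of single-variable positive definite functions is again positive definite in the multivariate sense used here, namely identifying $f$ with the Fourier transform of the product measure and invoking positivity of product measures. The rest is bookkeeping: evaluating $f_c$ at $0$ and $\pm 1$, and tracking which coordinates of $e_i-e_j$ are nonzero.
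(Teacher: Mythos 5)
Your proof is correct and follows essentially the same route as the paper's: both invoke the lemma to get each $f_{v_k}$ positive definite, form the coordinatewise product $f(n_1,\ldots,n_d)=\prod_k f_{v_k}(n_k)$ justified by a product-measure argument, evaluate $f(0)=1$ and $f(e_i-e_j)=f_{v_i}(1)f_{v_j}(-1)=v_i\overline{v_j}$, and conclude via Theorem~\ref{comex}. Your added care about the reflection property $f(-n)=\overline{f(n)}$ is a reasonable reading of the lemma's intent (its displayed formula for $n<0$ has a sign slip in the exponent) and does not change the argument.
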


\begin{proof} It follows from the previous lemma that $f_{v_k}$ is a positive definite function on $\mathbb{Z}$.  Therefore a simple product measure argument shows us that $f(n_1,n_2,...,n_d)=\prod_{k=1}^d f_{v_k}(n_k)$ is a positive definite function $f:\mathbb{Z}^d\to \mathbb{C}$.  Then $f(0)=1$ and $f(e_i-e_j)=  f_{v_i}(1)f_{v_j}(-1)=v_i\overline{v_j}$.  Our result now follows from Theorem \ref{comex}.
\end{proof}

 %\begin{corollary}\label{coro} Let $C$ be an $n$ by $n$ complex correlation matrix.  Then $\frac{1}{n}C+\frac{n-1}{n}I$ is in the convex hull of complex rank one correlation matrices.  \end{corollary}

%\begin{proof} By the spectral theorem, $C=\sum_{k=1}^d \mu_k v_kv_k^*$ where for all $j$, $\mu_j$ is an eigenvalue of $C$ and $v_k$ is a corresponding unit length eigenvector of $C$.  For any vector $v$ with $\Vert v\Vert_{\infty}\le 1$, let $M(v)$ be the correlation matrix whose off-diagonal entries are exactly the same as the matrix $vv^*$.  Now since $\sum_{k=1}^n \mu_k=tr(C)=n$ and $\frac{1}{n}C+\frac{n-1}{n}I=\sum_{k=1}^n \frac{\mu_k}{n} M(v_k)$, the matrix $\frac{1}{n}C+\frac{n-1}{n}I$ is in the convex hull of the matrices $M(v_k)$.  Our result now follows if we can prove that $M(v)$ is in the convex hull of the complex rank one correlation matrices for all unit length vectors $n$.  Let $g:\mathbb{T}^n\to [0,\infty)$ be defined as $g(z_1,z_2,...,z_n)=\vert \sum_{k=1}^n v_k e^{iz_k}\vert^2$.  Since $g$ is nonnegative, its Fourier transform $f$ is a positive definite function on $\mathbb{Z}^n$ with $f(0)=\sum_{k=1}^n \vert v_k \vert^2=1$ and $f(e_i-e_j)$ being equal to the $(i,j)$ entry of $M(v)$ for all $i\neq j$.  Our result now follows from Theorem \ref{comex}. \end{proof}

\begin{corollary}\label{coro}
Let $C$ be a rank $r$ complex correlation matrix.  Then $\frac{1}{r}C+\frac{r-1}{r}I$ is in the convex hull of complex rank one correlation matrices.
\end{corollary}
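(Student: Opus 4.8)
The plan is to use Corollary~\ref{corold} as the key tool, combined with the spectral decomposition of a rank-$r$ correlation matrix. First I would write $C = \sum_{k=1}^r \lambda_k w_k w_k^*$ where the $w_k$ are orthonormal eigenvectors and the $\lambda_k > 0$ are the nonzero eigenvalues, noting that $\sum_k \lambda_k = \tr(C) = d$ since the diagonal entries of a correlation matrix are all $1$. The goal is to express $\frac{1}{r}C + \frac{r-1}{r}I$ as a convex combination of matrices to which Corollary~\ref{corold} applies, namely matrices $M(v)$ whose off-diagonal part agrees with $vv^*$ for some $v$ with $\norm{v}_\infty \leq 1$.

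The central idea is that the off-diagonal part of $C$ equals the off-diagonal part of $\sum_k \lambda_k w_k w_k^*$, and I want to distribute this across the $r$ rank-one pieces. For each eigenvector $w_k$, I would consider the scaled vector $v_k = \sqrt{\lambda_k}\, w_k$ (or some normalization of it) and check the sup-norm constraint. The crucial observation is that for a correlation matrix, the entries of the eigenvectors are controlled: specifically, since $\abs{(w_k)_i}^2 \leq 1/\lambda_k$ is generally \emph{not} automatic, I expect the right choice is to exploit that $\sum_k \lambda_k \abs{(w_k)_i}^2 = c_{ii} = 1$ for each $i$. This means that if I set $v_k$ with $i$-th entry $\sqrt{\lambda_k}(w_k)_i$, then $\sum_k \abs{(v_k)_i}^2 = 1$, so each individual $\abs{(v_k)_i} \leq 1$, giving $\norm{v_k}_\infty \leq 1$ as required by Corollary~\ref{corold}.

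With this choice, each $M(v_k)$ lies in the convex hull of rank-one correlation matrices by Corollary~\ref{corold}. Now I would average: consider $\frac{1}{r}\sum_{k=1}^r M(v_k)$. The off-diagonal $(i,j)$ entry of this average is $\frac{1}{r}\sum_k (v_k)_i \overline{(v_k)_j} = \frac{1}{r}\sum_k \lambda_k (w_k)_i \overline{(w_k)_j} = \frac{1}{r} c_{ij}$, while every diagonal entry is $1$ by construction of $M(\cdot)$. Hence $\frac{1}{r}\sum_k M(v_k)$ is exactly the matrix with off-diagonal part $\frac{1}{r}C$ and diagonal entries $1$, which is precisely $\frac{1}{r}C + \frac{r-1}{r}I$ (since the diagonal of $\frac{1}{r}C$ is $\frac{1}{r}I$, and adding $\frac{r-1}{r}I$ restores it to $1$). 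Since each $M(v_k)$ is in the convex hull of rank-one correlation matrices and that hull is convex, the average is too, which is the claim.

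The main obstacle I anticipate is cleanly verifying the sup-norm bound $\norm{v_k}_\infty \leq 1$, which hinges on the identity $\sum_k \lambda_k \abs{(w_k)_i}^2 = 1$; this follows from reading off the $(i,i)$ diagonal entry of the spectral decomposition $C = \sum_k \lambda_k w_k w_k^*$ and using that $C$ is a correlation matrix. A secondary point requiring care is the bookkeeping of diagonal versus off-diagonal entries: Corollary~\ref{corold} forces all diagonals of $M(v_k)$ to equal $1$ regardless of $\norm{v_k}$, so the averaging automatically produces diagonal $1$'s, and I must confirm this matches the diagonal of $\frac{1}{r}C + \frac{r-1}{r}I$, which it does since $c_{ii}=1$. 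No genuinely hard estimate is needed beyond these identities; the result is essentially a repackaging of Corollary~\ref{corold} through the spectral theorem.
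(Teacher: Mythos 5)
Your proof is correct and follows essentially the same route as the paper: decompose $C$ into $r$ rank-one pieces $v_kv_k^*$ with $\norm{v_k}_\infty\le 1$ (forced by $\sum_k\abs{(v_k)_i}^2=c_{ii}=1$), observe $\frac{1}{r}C+\frac{r-1}{r}I=\frac{1}{r}\sum_k M(v_k)$, and invoke Corollary~\ref{corold}. The only cosmetic difference is that you fix the spectral decomposition while the paper uses an arbitrary rank-$r$ decomposition, and you spell out the sup-norm bound that the paper merely asserts.
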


\begin{proof}  We must have vectors $\{v_k\}_{k=1}^{r}$ such that $C=\sum_{k=1}^r v_kv_k^*$; then all these vectors must satisfy  $\Vert v_k\Vert_{\infty}\le 1$.  Since $\frac{1}{r}C+\frac{r-1}{r}I=\frac{1}{r}\sum_{k=1}^r M(v_k)$, our result now follows from Corollary \ref{corold}.
\end{proof}

\begin{remark}
{\rm 
We note that this can be viewed as the complex version of \cite[Theorem 7]{o2013self} which gave an identical result for real correlation matrices. It was observed in \cite{o2013self} that \cite[Theorem 7]{o2013self} is not optimal at least in small dimensions, and it is likely that the same is true for Corollary \ref{coro}.  We note that any extreme point of the set of $d \times d$ correlation matrices has rank at most  $\lfloor \sqrt{d} \rfloor$ and hence for any $d \times d$ complex correlation matrix $C$, we have that $\frac{1}{\lfloor \sqrt{d} \rfloor}C+\frac{\lfloor \sqrt{d} \rfloor-1}{\lfloor \sqrt{d} \rfloor}I$ is in the convex hull of complex rank one correlation matrices.  This result is an improvement on \cite[Proposition 4.1]{dykema-juschenko}.
}
\end{remark}

%We note an important difference in the structures of the real and complex cases.  Since there are only finitely many $n$ by $n$ real rank-one correlation matrices, their convex hull is a polyhedral set, this is not true for the complex rank-one correlation matrices.

%%%%%%%%%%%

\section{Acknowledgements}

We thank John Watrous for communicating Example~\ref{watrouseg} to us.
D.W.K. was partially supported by the NSERC Discovery Grant RGPIN-2018-400160. R.P. was partially supported by the NSERC Discovery Grant RGPIN-2022-04149.
 M.R is supported by the European Research Council (ERC Grant Agreement No. 851716).

\bibliography{KLPR}
\bibliographystyle{plain}

\end{document}